\documentclass[11pt,reqno,twoside]{amsart}
\usepackage{amssymb,amsfonts,amsthm,amsmath,mathrsfs}  
\usepackage{color}
\usepackage{hyperref}%\hypersetup{pagebackref=true}
\usepackage[left=1 in,top=1 in,right=1 in,bottom=1 in]{geometry}
\usepackage{cite}
\usepackage{enumitem}
\usepackage{mathtools}
\mathtoolsset{showonlyrefs}

%\linespread{1.3}

\newcommand{\beq}{\begin{equation}}
\newcommand{\eeq}{\end{equation}}
\newcommand{\beqs}{\begin{equation*}}
\newcommand{\eeqs}{\end{equation*}}
\newcommand{\ba}{\begin{array}}
\newcommand{\ea}{\end{array}}
\newcommand{\beas}{\begin{eqnarray*}}
\newcommand{\eeas}{\end{eqnarray*}}
\newcommand{\bea}{\begin{eqnarray}}
\newcommand{\eea}{\end{eqnarray}}
\newcommand{\bal}{\begin{align}}
\newcommand{\eal}{\end{align}}

\newcommand{\bals}{\begin{align*}}
\newcommand{\eals}{\end{align*}}

\newcommand{\R}{\ensuremath{\mathbb R}}

\newcommand{\m}{\mathbf m }
\newcommand{\vv}{\mathbf v}
\newcommand{\ww}{\mathbf w}
\newcommand{\uu}{\mathbf u}

\newcommand{\intb}[1]{\left\langle #1 \right\rangle}
\newcommand{\diver}{{\rm {div} \, }}
\newcommand{\inprod}[1]{\langle{#1}\rangle}

\newcommand{\norm}[1]{\| {#1} \|}

\newcommand{\bds}{\begin{displaystyle}}
\newcommand{\eds}{\end{displaystyle}}

\def\eqdef{\stackrel{\rm def}{=}}

\def\varep{\varepsilon}

\def\d{{\rm d}}

\newcommand{\tnum}{\rm(\roman*)}
\newcommand{\rnum}{\rm(\alph*)}

\newtheorem{theorem}{Theorem}[section]

\newtheorem{lemma}[theorem]{Lemma}
\newtheorem{corollary}[theorem]{Corollary}
\newtheorem{proposition}[theorem]{Proposition}
\newtheorem{definition}[theorem]{Definition}
\newtheorem{assumption}[theorem]{Assumption}

\theoremstyle{remark}
\newtheorem{example}[theorem]{\bf{Example}}
\newtheorem{remark}[theorem]{\bf{Remark}}

\numberwithin{equation}{section}
\date{\today}

\title[Anisotropic Flows of Forchheimer-type in  Porous Media]
{Anisotropic flows of Forchheimer-type in porous media and their steady states}

\author[L. Hoang]{Luan Hoang$^{1,*}$}
\address{$^1$Department of Mathematics and Statistics,
Texas Tech University\\
1108 Memorial Circle, Lubbock, TX 79409--1042, U. S. A.}
\email{luan.hoang@ttu.edu}

\author{Thinh Kieu$^{2}$}
\address{$^{2}$Department of Mathematics, University of North Georgia, Gainesville Campus\\
3820 Mundy Mill Rd., Oakwood, GA 30566, U. S. A.}
\email{thinh.kieu@ung.edu}

\thanks{$^*$Corresponding author.}

\keywords{porous media, anisotropic, Forchheimer, fluid flows, monotonicity, nonlinear partial differential equations, first-order system}

\makeatletter
\@namedef{subjclassname@2020}{\textup{2020} Mathematics Subject Classification}
\makeatother
\subjclass[2020]{76S05 , 76N10, 35F60}
%76S05   	Flows in porous media; filtration; seepage
%76N10   	Existence, uniqueness, and regularity theory for compressible fluids and gas dynamics
%35F30   	Boundary value problems for nonlinear first-order PDEs
%35F60   	Boundary value problems for systems of nonlinear first-order PDEs

\begin{document}

\begin{abstract} 
We study the anisotropic Forchheimer-typed flows for compressible fluids in porous media.
%The models are based on both experimental data and mathematical generalizations.
The first half of the paper is devoted to understanding the nonlinear structure of the anisotropic momentum  equations. Unlike the isotropic flows, the important monotonicity properties are not automatically satisfied in this case. Therefore, various sufficient conditions for them are derived and applied to the experimental data. 
In the second half of the paper, we prove the existence and uniqueness of the steady state flows subject to a nonhomogeneous Dirichlet boundary  condition. It is also established that these steady states, in appropriate functional spaces, have local H\"older continuous dependence on the forcing function and the boundary data.
\end{abstract}

\maketitle 

\tableofcontents 

\pagestyle{myheadings}\markboth{{\sc L. Hoang and T. Kieu}}{{\sc Anisotropic Flows of Forchheimer-type in  Porous Media}}

\section{Introduction}\label{Intro}
We study fluid flows in porous media. In this section,  
$p\in \R$ is the pressure, 
$\vv\in \R^3$ is the velocity, 
$\mu>0$ is the dynamic  viscosity, $\rho\ge 0$ is the density, 
$k>0$ is the permeability, 
and $\phi\in(0,1)$ is the constant porosity.
The standard momentum equation for fluid flows in porous media is the Darcy's law
\begin{equation}\label{Darcy}
-\nabla p = \frac {\mu}{k} \vv.
\end{equation}
However, it was noticed very early by Dupuit \cite{Dupuit1857} that  there was a deviation from this Darcy's law. 
Nowadays, Darcy's law is used widely for flows with low Reynolds numbers.
For higher Reynolds numbers, Forchheimer \cite{Forchh1901,ForchheimerBook} suggested three empirical nonlinear models called the two-term, three-term and power laws.
The Forchheimer two-term  law is
\begin{equation}\label{FL2}  
a_0 \vv+a_1|\vv|\vv=-\nabla p, \text{ where }a_0,a_1>0.
\end{equation}
The Forchheimer three-term  law is
\begin{equation}\label{FL3}
a_0 \vv+a_1|\vv|\vv+a_2|\vv|^2\vv=-\nabla p, \text{ where }a_0,a_1,a_2>0.
\end{equation}
The Forchheimer power law is
\begin{equation}\label{FLP}  a_0 \vv+a_1|\vv|^{m-1}\vv=-\nabla p, \text{ where  $a_0,a_1>0$ and $1.6\le m\le 2$.}
\end{equation}
The nonlinear terms in \eqref{FL2}, \eqref{FL3} and \eqref{FLP} were added to correct the deviation from the Darcy law \eqref{Darcy} and fit the experimental data.
The two-term law \eqref{FL2} was later refined and confirmed by experiments by Ward \cite{Ward64}, using the dimension analysis known in Muskat's book \cite{MuskatBook}, to become
\begin{equation}\label{FW}  
\frac{\mu}{k} \vv+c_F\frac{\rho}{\sqrt k}|\vv|\vv=-\nabla p, \text{ where }c_F>0.
\end{equation}
There are many other nonlinear models throughout the years such as Ergun \cite{Ergun1952}, Scheidegger \cite{Scheidegger1974}, Irmay \cite{Irmay1964d}, Ahmed \cite{Ahmed1967}, etc., see  \cite[Section 5.11.3]{BearBook} for more equations.
For example, Scheidegger \cite[formulas (7.2.3.9) and  (7.2.3.14)]{Scheidegger1974} obtained equation \eqref{FL2} with
\beqs
a_0=C_1\frac{\mu T^2}{\phi}\text{ and } a_1=C_2\frac{\rho T^3}{\phi^2},
\eeqs 
where $C_1,C_2$ are coefficients depending on the grain-size distribution, and $T$ is tortuosity.
Despite of having more specific coefficients, these nonlinear equations are similar to the three Forchheimer laws above.
The Forchheimer-typed equations can also be derived theoretically in various ways, see e.g. \cite{Irmay1958,ChenLyonsQin2001,ELLP,Whitaker1996,Scheidegger1974}.
For more information about Forchheimer flows in porous media, see e.g. \cite{BearBook,MuskatBook,NieldBook,Scheidegger1974}.

The generalized Forchheimer equations were proposed to unify all the known forms of the Forchheimer-typed equations.  They are of the form
\beq\label{genF}
\sum_{i=0}^m a_i |\vv|^{\alpha_i}\vv=-\nabla p, 
\eeq  
where $a_i$'s are positive numbers and  $0=\alpha_0<\alpha_1<\ldots<\alpha_m$.

All of the above equations are written for isotropic porous media.
For anisotropic porous media, Bachmat \cite{Bachmat1965} gave an equation  
$$ a_0\vv+a_1|\vv|\vv=-K\nabla p,\text{ where $K$ is the permeability tensor.}$$
Barak and Bear \cite{BarakBear81} later formulated many physical and mathematical models for  fluid flows at high Reynolds numbers in anisotropic porous media. 
We review here one of those physical models (the first one).
Denote by $\mathbf q$, $\mathbf J$, $g$, $\nu$ the specific discharge vector, the hydraulic gradient, the gravity acceleration and kinematic viscosity.
Consider a porous medium that contains three bundles of pipes which are parallel to each other and have the same diameters within each bundle, and the bundles' directions are orthogonal to each other.
For the $\lambda$th group, $\lambda=1,2,3$, its axis is parallel to the unit vector $\mathbf 1_\lambda$, and let $M_\lambda$, $D_{c\lambda}$, $A_{c\lambda}$ be the number of pipes per unit area normal to their axis, the pipes' diameter, the average cross-sectional area of a pipe in the group. Assume that the frictions in two opposite directions along a pipe are the same $C_c$ which yields $C_\Delta=0$ and $C_\lambda=0$ in formulas (8)--(14) of \cite{BarakBear81}.
Then the derived equation (13) in \cite{BarakBear81} reads as
\beq \label{qorig}
\mathbf q\cdot(\nu \mathbf w^{(2)}+|\mathbf q|\mathbf B^{(2)}\cdot\mathbf Q^{(2)})=g\mathbf J,
\eeq 
where $\mathbf w^{(2)}$, $\mathbf B^{(2)}$, $\mathbf Q^{(2)}$ are tensors of 2nd order defined by 
$$\mathbf w^{(2)}=\sum_{\lambda=1}^3 w_\lambda \mathbf 1_\lambda\mathbf 1_\lambda,\quad 
\mathbf B^{(2)}=\sum_{\lambda=1}^3 B_\lambda \mathbf 1_\lambda\mathbf 1_\lambda, \quad 
\mathbf Q^{(2)}=\sum_{\lambda=1}^3 |\cos\,\varphi_\lambda|\mathbf 1_\lambda\mathbf 1_\lambda, $$
with 
$w_\lambda={C_0}/(A_{c\lambda}D_{c\lambda}^2M_\lambda)$, $B_\lambda={C_c}/(A_{c\lambda}^2D_{c\lambda}^2M_\lambda^2)$, 
$C_0$ being a dimensionless coefficient, and 
$\varphi_\lambda$ being the angle between $\mathbf q$ and $\mathbf 1_\lambda$.

In addition to this physical model and \eqref{qorig}, \cite{BarakBear81} considers four more cases which yield corresponding equations.
Based on those, many more mathematical models and generalizations were investigated. In particular, one mathematical generalization of \eqref{qorig} is
\beq \label{qorig2}
\nu \mathbf w^{(2)}\cdot \mathbf q +\frac1{|\mathbf q|}\mathbf q\mathbf q:\mathbf B^{(4)}\cdot \mathbf q=g\mathbf J,
\eeq 
where $\mathbf B^{(4)}$ is a 4th order tensor.
However, experiments reported in \cite{BarakBear81} yield only one equation, namely,
\beq\label{BB0}
\mu A_0 \vv+\frac\rho{|\vv|}\begin{pmatrix}
       (av_1^2+bv_2^2)v_1\\ (cv_1^2+dv_2^2)v_2
      \end{pmatrix} =-\nabla p\text{ for } \vv=(v_1,v_2)\in\R^2,
\eeq
where $A_0={\rm diag}[109,220]$ - a diagonal matrix, and $a,b,c,d$ are the following positive constants
\beq\label{abcd}
a=0.20,\quad b=1.04,\quad c=0.67,\quad d=1.15.
\eeq
Although more complicated models \cite{Cvetko86,HassanGray87} were proposed, we will use equation \eqref{BB0} as our motivation because it was actually verified by experiments.

Equation \eqref{BB0} can be formulated in a general form, taking into account the forms of equations \eqref{qorig} and \eqref{qorig2}, as
\beq \label{BB}
A_0 \vv + |\vv| A\vv +\frac1{|\vv|} B(\vv,\vv,\vv) =-\nabla p,
\eeq
where  $A_0$ and $A$  are square matrices, and $B(\cdot,\cdot,\cdot)$  is a trilinear mapping.

The first term in \eqref{BB} is the anisotropic Darcy term which corresponds to the first terms in \eqref{qorig}, \eqref{qorig2} and \eqref{BB0}.
The second term in \eqref{BB} corresponds to that of \eqref{qorig}. It is  an anisotropic version of the drag term $a_1|\vv|\vv$ in \eqref{FL2}.
The third  term in \eqref{BB} corresponds to the second terms in \eqref{qorig2} and \eqref{BB0}.
Mathematically speaking, the third term in \eqref{BB} is a generalization of the second term. Indeed, if 
$$B(\uu,\vv,\ww)=(\uu\cdot\vv)A\ww\text{ then }\frac1{|\vv|}B(\vv,\vv,\vv)=|\vv|A\vv.$$
However, we keep both of these terms in \eqref{BB} because they will have very different mathematical treatments later.

While the mathematical study of the Darcy law has a long history and vast literature, see the treaty \cite{VazquezPorousBook} and references there in, the isotropic Forchheimer equations were studied much later, see e.g. \cite{Fabrie1989a,Franchi2003,Payne1996,Payne1999b,Payne2000a,Payne2000b,ChadamQin,StraughanBook} for incompressible fluids and \cite{ABHI1,HI2,HK2,HKP1,HIKS1,CHK2,CH2,CHK4} for compressible fluids. To the best of our knowledge, the current paper is the first to study rigorously the mathematics of anisotropic Forchheimer equation \eqref{BB}.

When the dependence on the density is needed, we can use the dimension analysis in \cite{MuskatBook}, to modify equation \eqref{BB}. Same as equation \eqref{FW}, it results in the following equation
\beq \label{AAB}
A_0 \vv + \rho |\vv| A\vv +\frac\rho {|\vv|} B(\vv,\vv,\vv) =-\nabla p.
\eeq

The momentum equation \eqref{AAB} is coupled with the conservation of mass which is 
\beq\label{mass}
\phi \rho_t + \nabla \cdot (\rho \vv)=f,
\eeq
where $f(x,t)$ is the source/sink term.

Multiplying equation \eqref{AAB} by $\rho$ gives
\beq\label{rveq}
A_0(\rho \vv)+|\rho \vv|A(\rho \vv) + \frac{1}{|\rho \vv|}B(\rho \vv,\rho \vv,\rho \vv)=-\rho \nabla p.
\eeq

Denote the momentum by $\m=\rho \vv$. Defining $F(\vv)$ to be the left-hand side of \eqref{BB},  we then obtain from \eqref{rveq} and \eqref{mass} that
\beq \label{msys}
F(\m)=-\rho \nabla p,\quad 
\phi \rho_t + \nabla \cdot \m=f.
\eeq 

\medskip\noindent
\emph{The case of isentropic flows.} We have $p=\bar c\rho^\gamma$ with $\bar c,\gamma>0$.
Set 
\beq
\widetilde p=\frac{\bar c\gamma}{\gamma+1} \rho^{\gamma+1},\quad \lambda=\frac{1}{\gamma+1},
\quad c=\left(\frac{\gamma+1}{\bar c\gamma}\right)^\lambda.
\eeq
Then the system \eqref{msys} becomes
\beq\label{pseu}
F(\m)=- \nabla \widetilde p,\quad 
c \phi (\widetilde p^\lambda)_t + \nabla \cdot \m=f.
\eeq 
In particular, $\gamma=1$ and $\lambda=1/2$ for ideal fluids. The quantity $\widetilde p$, up to a constant multiplier, is known as the pseudo pressure.

\medskip\noindent
\emph{The case of slightly compressible flows.}  We have that the compressibility 
\beq
\kappa:=\frac1\rho\frac{\d \rho}{\d p} \text{ is a small, positive constant.}
\eeq
Observe that the first equation in \eqref{msys} now is 
$F(\m)=-\kappa^{-1} \nabla \rho$.
By setting 
$$\widetilde p=\rho /\kappa,\quad \lambda=1\text{ and }c=\kappa,$$
we obtain the system \eqref{pseu} again from \eqref{msys}

\medskip
Our next task is to analyze \eqref{pseu} which is a  system of nonlinear partial differential equations (PDE).
First, we study the nonlinear structure of the momentum equation \eqref{AAB} and its generalizations. The key properties that we investigate are the monotonicities for the function $F$, see Definition \ref{Mdef}. Because of the counter examples \ref{nomono} and \ref{nomoF}, many sufficient conditions need be developed. Once the monotonicities are obtained, we can solve for $\m=-X(\nabla \widetilde p)$ from the first equation of \eqref{pseu} and substitute it into the second equation. It results in a second order scalar parabolic equation\beq
c\phi  \frac{\partial (\widetilde p^\lambda)}{\partial t}+\nabla\cdot (X(\nabla \widetilde p))=f.
\eeq

This approach was widely exploited in our previous work, see \cite{HI2,HK2,HKP1,HIKS1,CHK2,CHK4,CH1,CH2} and references therein, and also \cite{GM1975,Chow1989,BN1990,Sandri1993,Fabrie1989a} for the stationary problems and their numerical analysis. 
In the current work, we follow a more straightforward approach, namely, studying system \eqref{pseu} directly as a system of the first order PDE. 
%with much less regularity requirements. 
As a demonstration, we will focus on the steady state flows with a nonhomogeneous Dirichlet boundary condition. 
We obtain the existence and uniqueness of these anisotropic flows for any order $s>2$ in Assumption \ref{assuF}. They are more general than the counterpart in \cite{KS16} for the isotropic Forcheimer flows corresponding to $s=3$, as well as \cite{Fabrie1989a} for isotropic Forchheimer's power laws with  the flux  boundary condition. 
Moreover, we establish explicit estimates and continuous dependence results which are not obtained in \cite{Fabrie1989a,KS16}.

The paper is organized as follows.
In Section \ref{prelim}, we present inequalities that will be used throughout the paper. Also, relevant functional spaces and their  important properties are recalled. 
In Section \ref{mod1sec}, we study properties of the function $G_B$ defined by \eqref{Bnd} and \eqref{GBdef}. It  is the $n$-dimensional version of the last term on the left-hand side of \eqref{BB}. 
Their upper and lower bounds, and Lipschitz estimates are obtained in Lemma \ref{Bprop}.
For our mathematical treatments, the monotonicities in Definition \ref{Mdef} are crucial.
However, such monoticities are not automatic under the general positivity condition for the coefficients, see Example \ref{nomono}. Therefore, the large part of this section is devoted to finding sufficient conditions for the monotonicities. Subsection \ref{ss2d} treats the two-dimensional case, while subsection \ref{ssnd} does the higher dimensional case. Our sufficient conditions turn out to be applicable to the real data obtained in \cite{BarakBear81}, see Example \ref{eg2}. 
In Section \ref{mod2sec}, we consider other possible anisotropic flows in the form of the functions $F_{A,\alpha}$ and $\widetilde F_{A,\alpha}$, see \eqref{FAdef} and \eqref{tildeFA}. These represent different anisotropic versions of \eqref{genF}. While $\widetilde F_{A,\alpha}$ is $(\alpha+2)$-monotone under the coercivity condition \eqref{Avv}, the function $F_{A,\alpha}$ may not be so, see Example \ref{nomoF}.
Therefore, many sufficient conditions for the monotonicities are derived in Theorems \ref{Monothm7}, \ref{Monothm5} and \ref{Monothm6}. At the end of the section, many scenarios for the momentum equations are shown in Proposition \ref{thmix}.
In Section \ref{StatProb}, we study the steady state flows for a general system of the form \eqref{pseu} subject to a time-independent nonhomogeneous Dirichlet boundary condition, see Problem \eqref{stationaryProb}.
Thanks to our understanding in Sections \ref{mod1sec} and \ref{mod2sec}, natural and general conditions on the function $F$ are imposed in Assumption \ref{assuF}. 
The variational formulation in suitable functional spaces for the weak solutions is established, see Definition \ref{weaksoln}. 
Theorem \ref{mainthm} contains the main results for system \eqref{stationaryProb} including the existence, uniqueness of the weak solutions, together with their estimates and continuous dependence on the forcing function and boundary data. 
For its proof, we generalize the method used in \cite{KS16}.  
In fact, a general functional equation is studied instead, see \eqref{VQgen}.
Its regularized problem is introduced and investigated in subsection \ref{subreg}.
Using this approximation, we obtain the existence and uniqueness in Theorem \ref{SolofStationaryProb}. Estimates of the solutions are obtained in Theorem \ref{postest} and the continuous dependence result is established in Theorem \ref{cdthm}.
Gathering the above general results gives a quick proof of Theorem \ref{mainthm} in subsection \ref{mainpf}.
Section \ref{conclude} contains concluding remarks and further discussions.

\section{Preliminaries}\label{prelim}

\subsection{Notation and inequalities} 
For a vector $x\in\R^n$, its Euclidean norm is denoted by $|x|$.

Let $ A=(a_{ij})_{1\le i,j\le n}$  be an $n\times n$ matrices of real numbers. The Euclidean norm of $ A$ is $| A|=\Big(\sum_{i,j=1}^n a_{ij}^2\Big)^{1/2}.$
(We do not use $| A|$ to denote the determinant in this paper.)
When $ A$ is considered as a linear operator, another norm is defined by
\beqs
\| A\|_{\rm op}=\max\left\{ \frac{| Ax|}{|x|}:x\in\R^n,x\ne 0\right\} = \max\{ | Ax|:x\in\R^n,|x|=1\}.
\eeqs
It is well-known that
\beq\label{nnorms}
\| A\|_{\rm op}\le | A|\le \bar c_0\| A\|_{\rm op},
\eeq
where $\bar c_0=\bar c_0(n)$ is a positive constant independent of $ A$.

For any $x\in\R^n$, we have
\beq\label{op1}
|Ax|\le \|A\|_{\rm op}|x|\le |A|\,|x|,
\eeq 
and, in the case $A$ is invertible,
\beq \label{op2}
|Ax|\ge \|A^{-1}\|_{\rm op}^{-1}|x|.
\eeq 

The following are some commonly used consequences of Young's inequality. If $x,y\ge 0$, $\gamma\ge\beta\ge \alpha>0$, $p,q>1$ with $1/p+1/q=1$, and $\varep>0$, then
\beq\label{Yineq} 
x^\alpha\le 1+x^\beta, \quad x^\beta\le x^\alpha+x^\gamma, \quad
xy\le \varep x^p + \varep^{-q/p}y^{q}. 
\eeq 

For $z\in \R$, denote $z^+=\max\{0,z\}$.
For $x,y\in \R^n$ and $p>0$, one has
\beq \label{eleori}
(|x|+|y|)^p\le 2^{(p-1)^+}(|x|^p+|y|^p)
=\begin{cases} |x|^p+|y|^p,& \text{ for }p\in(0,1],\\
 2^{p-1}(|x|^p+|y|^p),& \text{ for }p>1,
 \end{cases}
\eeq 
which  consequently yields
\begin{align} \label{ele0}
(|x|+|y|)^p&\le 2^p (|x|^p+|y|^p)&& \text{ for }p>0,\\
 \label{ele1}
||x|^p-|y|^p| &\le |x-y|^p&&\text{ for } p\in(0,1].
\end{align} 
Regarding the last inequality, one has, in the case $p>1$, that
 \beq\label{ele2} 
    ||x|^p -|y|^p |\le 2^{(p-2)^+}(|x|^{p-1}+|y|^{p-1})|x-y|.
    \eeq

\begin{lemma}\label{ILem1}
If $p>0$ and $x,y\in\R^n$, then
\begin{align}\label{inq1}
||x|^p x-|y|^p y|&\le 2^{(p-1)^+} (|x|^p+|y|^p)|x-y|,\\
\label{inq3m}
(|x|^p x-|y|^p y)\cdot (x-y)&\ge \frac{1}{2}(|x|^p+|y|^p)|x-y|^{2},\\
\label{inq3}
(|x|^p x-|y|^p y)\cdot (x-y)&\ge \frac{1}{2^{1+(p-1)^+}}|x-y|^{p+2}.
\end{align}

If $p\in (-1,0)$, then
\begin{align} \label{inq2}
||x|^{p} x-|y|^{p} y| &\le 2^{-p}|x-y|^{1+p} &&\text{ for all $x,y\in \R$,}\\
\label{inq4}
(|x|^{p}x-|y|^{p} y)\cdot (x-y)&\ge (1+p)(|x|+|y|)^{p} |x-y|^2 &&\text{ for all $x,y\in \R^n$.}
\end{align}
\end{lemma}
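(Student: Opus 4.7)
The backbone of the argument will be the algebraic identity
\begin{equation*}
|x|^p x - |y|^p y = \frac{1}{2}(|x|^p + |y|^p)(x - y) + \frac{1}{2}(|x|^p - |y|^p)(x + y),
\end{equation*}
which cleanly handles the three estimates with $p>0$. For \eqref{inq3m}, I would take the dot product with $x - y$; the cross term becomes $\frac{1}{2}(|x|^p - |y|^p)(|x|^2 - |y|^2)$, which is non-negative since $|x|^p - |y|^p$ and $|x|^2 - |y|^2$ have the same sign for $p>0$, and dropping it yields the claim. Then \eqref{inq3} follows from \eqref{inq3m} by invoking \eqref{eleori} to bound $|x|^p + |y|^p \ge 2^{-(p-1)^+}(|x|+|y|)^p$ and using $|x|+|y| \ge |x-y|$.

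For \eqref{inq1}, the plan is to take norms in the same identity and apply the triangle inequality, reducing matters to estimating $\frac{1}{2}\bigl||x|^p - |y|^p\bigr|(|x|+|y|)$. I would split into two cases. For $p \in (0,1]$, assuming WLOG $|x| \ge |y|$, a direct expansion gives
\begin{equation*}
(|x|^p - |y|^p)(|x|+|y|) - (|x|^p + |y|^p)(|x|-|y|) = 2|x||y|(|x|^{p-1} - |y|^{p-1}) \le 0,
\end{equation*}
the sign coming from $p-1 \le 0$; combining yields \eqref{inq1} with the stated constant $1 = 2^0$. For $p>1$, I would switch to the fundamental theorem of calculus applied to $\Phi(z) = |z|^p z$, writing $|y|^p y - |x|^p x = \int_0^1 D\Phi(z)(y-x)\,dt$ with $z = x + t(y-x)$ and $D\Phi(z) = |z|^p I + p|z|^{p-2} z z^T$. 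The pointwise bound $|D\Phi(z) v| \le (p+1)|z|^p|v|$, combined with the convexity $|z|^p \le (1-t)|x|^p + t|y|^p$ (valid for $p \ge 1$), gives the constant $(p+1)/2$, and the elementary inequality $p+1 \le 2^p$ for $p \ge 1$ closes the case.

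For $p \in (-1,0)$ the two estimates require separate arguments. For \eqref{inq2} (stated only on $\R$), setting $q = p+1 \in (0,1)$ gives $|x|^p x = \sign(x)|x|^q$. When $x,y$ share a sign, the claim reduces to $||x|^q - |y|^q| \le |x-y|^q$, which is \eqref{ele1}, and since $2^{-p}>1$ the bound is preserved; when $x,y$ have opposite signs, WLOG $x \ge 0 \ge y$, one has $|x|^p x - |y|^p y = x^q + |y|^q$, and concavity of $t \mapsto t^q$ gives $x^q + |y|^q \le 2^{1-q}(x+|y|)^q = 2^{-p}|x-y|^q$. For \eqref{inq4}, I would use the same integral representation as for \eqref{inq1} and dot with $y-x$ to obtain
\begin{equation*}
(|y|^p y - |x|^p x)\cdot(y-x) = \int_0^1 \bigl[|z|^p|y-x|^2 + p|z|^{p-2}(z\cdot(y-x))^2\bigr]\,dt.
\end{equation*}
Cauchy--Schwarz gives $(z\cdot(y-x))^2 \le |z|^2|y-x|^2$, and since $p<0$ this reverses to $p|z|^{p-2}(z\cdot(y-x))^2 \ge p|z|^p|y-x|^2$, so the integrand is at least $(1+p)|z|^p|y-x|^2$. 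Because $p<0$ and $|z| \le |x|+|y|$, we have $|z|^p \ge (|x|+|y|)^p$, and integration yields \eqref{inq4}. The main subtlety here is that $\Phi$ is only continuous (not $C^1$) at the origin, so when the segment $\{x+t(y-x)\}$ passes through $0$ the derivation requires a regularization (e.g., $\Phi_\varepsilon(z) = (|z|^2+\varepsilon)^{p/2}z$, then $\varepsilon \to 0^+$); the relevant integrals remain finite because $p > -1$.
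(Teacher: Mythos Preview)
Your proposal is correct and uses the same two core tools as the paper: the polarization identity $|x|^p x - |y|^p y = \tfrac12(|x|^p+|y|^p)(x-y) + \tfrac12(|x|^p-|y|^p)(x+y)$ for \eqref{inq3m}/\eqref{inq3}, and the fundamental theorem of calculus along the segment for \eqref{inq1} and \eqref{inq4}. The executions differ in two places. For \eqref{inq1} the paper treats all $p>0$ at once via FTC, using \eqref{eleori} to bound $|\gamma(t)|^p\le 2^{(p-1)^+}(t^p|x|^p+(1-t)^p|y|^p)$, and handles the origin by a separate ``Scenario 2'' computation with $y=-kx$; you instead split at $p=1$, using the algebraic identity for $p\le 1$ (a nice touch that avoids any origin issue) and convexity of $s\mapsto s^p$ for $p>1$ (which yields the sharper intermediate constant $(p+1)/2$ before weakening to $2^{p-1}$, and again avoids the origin issue since $\Phi\in C^1$ for $p>0$). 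For \eqref{inq4} both arguments run FTC on the segment; the paper disposes of the case where the segment hits the origin by the explicit $y=-kx$ computation, while you regularize via $\Phi_\varepsilon(z)=(|z|^2+\varepsilon)^{p/2}z$. Either device works; the paper's scenario split is slightly more elementary, your regularization slightly more systematic.
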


It is meant, naturally, in \eqref{inq2} and \eqref{inq4} that 
\beqs
|x|^{p}x,|y|^{p} y,(|x|+|y|)^{p} |x-y|^2=0\text{ for  $p\in(-1,0)$ and $x=y=0$.}
\eeqs 

The inequalities \eqref{ele2}--\eqref{inq4} contain explicit constants and many of them are the best ones. For example, inequalities \eqref{inq3m},\eqref{inq2}, and particular cases of \eqref{ele2}, respectively, \eqref{inq1}, \eqref{inq3}, when $p=2$, respectively, $p=1$, $p\ge 1$, have the best constants.
Elementary proofs of these inequalities are provided in Appendix \ref{apA} below.
For inequalities without explicit constants, see \cite{GM1975, Chow1989}.

\medskip
We recall a simple yet very useful inequality from \cite{CHIK1}.

\begin{lemma}[{\cite[from (2.29) to (2.31)]{CHIK1}}]
Let $x,y\in\R^n$ and $\alpha>0$. Then
\beq\label{intineq}
\int_0^1 |tx+(1-t)y|^\alpha\d t\ge \frac{|x-y|^\alpha}{2^{\alpha+1}(\alpha+1)}. 
\eeq
\end{lemma}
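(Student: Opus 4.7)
The plan is to exploit the reflection symmetry $t \leftrightarrow 1-t$ of the integrand. For each $t \in [0,1]$, I would pair up the values $z_t := tx + (1-t)y$ and $z_{1-t} := (1-t)x + ty$. Since $z_t - z_{1-t} = (2t-1)(x-y)$, the triangle inequality yields
\begin{equation*}
|z_t| + |z_{1-t}| \;\ge\; |z_t - z_{1-t}| \;=\; |2t-1|\,|x-y|,
\end{equation*}
and hence $\max(|z_t|, |z_{1-t}|) \ge |2t-1|\,|x-y|/2$ for every $t \in [0,1]$.

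Next, using the elementary bound $A^\alpha + B^\alpha \ge \max(A,B)^\alpha$, valid for all $A, B \ge 0$ and $\alpha > 0$, I would obtain the pointwise inequality
\begin{equation*}
|z_t|^\alpha + |z_{1-t}|^\alpha \;\ge\; \frac{|2t-1|^\alpha\,|x-y|^\alpha}{2^\alpha}.
\end{equation*}
Integrating this over $t \in [0,1]$, the change of variables $t \mapsto 1-t$ shows that the left side equals $2\int_0^1 |tx+(1-t)y|^\alpha\,dt$, while the right side integrates to $|x-y|^\alpha/(2^\alpha(\alpha+1))$ after noting that $\int_0^1 |2t-1|^\alpha\,dt = 1/(\alpha+1)$ (e.g., via the substitution $s = 2t-1$). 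Dividing by $2$ then delivers the claimed bound with the exact constant $2^{\alpha+1}(\alpha+1)$.

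No step here presents a genuine obstacle. The whole argument rests on the geometric observation that, for each $t$, at least one of the two points $z_t, z_{1-t}$ on the chord from $y$ to $x$ lies at distance $\ge |2t-1|\,|x-y|/2$ from the origin, combined with the elementary fact $A^\alpha + B^\alpha \ge \max(A,B)^\alpha$. The only minor bookkeeping is verifying that the constants line up, and the substitution for $\int_0^1 |2t-1|^\alpha\,dt$ settles that cleanly.
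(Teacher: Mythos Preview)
Your argument is correct. The reflection pairing $t\leftrightarrow 1-t$, the triangle inequality $|z_t|+|z_{1-t}|\ge |2t-1|\,|x-y|$, the trivial bound $A^\alpha+B^\alpha\ge \max(A,B)^\alpha$, and the computation $\int_0^1|2t-1|^\alpha\,\d t=1/(\alpha+1)$ all check out and combine exactly to the stated constant $2^{\alpha+1}(\alpha+1)$.

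Note that the paper does not actually supply its own proof of this lemma: it merely recalls the inequality from \cite{CHIK1} (equations (2.29)--(2.31) there). So there is no in-paper argument to compare against. Your self-contained proof is a clean way to fill that gap; the symmetrization device you use is essentially the same idea one would expect in the cited source, and it is arguably the most natural route to the sharp constant.
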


\subsection{Functional spaces}
Next, we review the Sobolev spaces and trace theorems.
\emph{Hereafter, the spatial dimension $n\ge 2$ is fixed.}

Let $\Omega$ be an open, bounded subset of $\R^n$ with the boundary $\partial \Omega$ of class $C^1$.

For $1\le s<\infty$, let $L^s(\Omega)$ be the standard Lebesgue space of scalar functions and 
denote $\mathbf L^s(\Omega) =(L^s(\Omega))^n$.   
 The notation $ \norm{\cdot}_{0,s}$ is used to denote both norms $\norm{\cdot}_{L^s(\Omega)}$ and $\norm{\cdot}_{\mathbf L^s(\Omega)}$.

For a nonnegative integer $m$, let $W^{m,p}(\Omega)$ be the standard Sobolev space 
with the norm 
\beqs
\norm{u}_{m,s} =
\Big( \sum_{|\alpha|\le m} \norm{D^\alpha u}^s_{L^s(\Omega)} \Big)^{\frac 1 s}.
\eeqs    

For any normed space $X$, its dual space is denoted by $X'$ and the product between $X'$ and $X$ is denoted by $\inprod{\cdot,\cdot}_{X',X}$, i.e., $\inprod{y,x}_{X',X}=y(x)$ for $y\in X'$ and $x\in X$.

Consider $1< s< \infty$ now. 
The function $\gamma_0:f\in C^\infty(\bar \Omega) \mapsto f\big|_{\partial \Omega}$ can be extended  to 
 a bounded linear mapping $\gamma_{0,s}: W^{1,s}(\Omega) \to L^s(\partial \Omega)$. 
 The function $\gamma_{0,s}(f)$ is called the trace of $f$ on $\partial \Omega$.

Define $X_s=W^{1-1/s,s}(\partial\Omega)$ to be the range of $\gamma_{0,s}$ equipped with the norm
\beqs%\label{Xnorm}
\|f\|_{X_s}=\inf\{ \norm{\varphi}_{1,s}: \varphi\in W^{1,s}(\Omega),\gamma_{0,s}(\varphi)=f\}.
\eeqs 
Define the space
\beq \label{Wsdef}
\mathbf W_s(\rm{div},\Omega)=\{ \vv\in \mathbf L^s(\Omega): \diver \vv \in L^s(\Omega)   \}
\eeq 
equipped with the norm 
\beq\label{Wsnorm}
\norm{\vv}_{\mathbf W_s(\rm{div},\Omega)} =\left (\norm{\vv}_{0,s}^s +\norm{\nabla\cdot \vv}_{0,s}^s\right )^{1/s}. 
\eeq 
Then $\mathbf W_s(\rm{div},\Omega)$ is a reflexive Banach space, see Lemma \ref{Wreflex} below.

Let $r>1$ be the H\"older conjugate of $s$, i.e., $1/s+1/r=1$. 
Clearly,  
$$\norm{\vv}_{0,s},\ \norm{\nabla\cdot \vv}_{0,s}\le \norm{\vv}_{\mathbf W_s(\rm{div},\Omega)}.$$ 
Moreover, thanks to \eqref{eleori},
\beq\label{Wadnorm}
\norm{\vv}_{\mathbf W_s(\rm{div},\Omega)} \le \norm{\vv}_{0,s} +\norm{\nabla\cdot \vv}_{0,s}\le 2^{1/r} \norm{\vv}_{\mathbf W_s(\rm{div},\Omega)} . 
\eeq 

Let $\vec{\nu}$ denote the outward normal vector to the boundary $\partial \Omega$.
Then one can extend the normal trace $\gamma_{\rm n}(\vv)=\vv\cdot \vec{\nu}$ for $\vv\in (C^\infty(\overline \Omega ))^n$ to a bounded, linear mapping $\gamma_{{\rm n},s}$ from $\mathbf W_s(\rm{div},\Omega)$ into $X_r'$. In particular, there is $\bar c_1>0$ such that
\beq\label{gamnorm}
\norm{\gamma_{{\rm n},s}(\vv)}_{X_r'}\le \bar c_1\norm{\vv}_{\mathbf W_s(\rm{div},\Omega)} \text{ for all $\vv\in \mathbf W_s(\rm{div},\Omega)$,}
\eeq
and Green's formula 
\beq\label{green}
\int_\Omega \vv \cdot \nabla q \d x +\int_\Omega \nabla \cdot \vv q \d x =  \inprod{\gamma_{{\rm n},s}(\vv),\gamma_{0,r}(q)}_{X_r',X_r}
\eeq
 holds for every $\vv\in \mathbf W_s(\rm{div},\Omega)$ and $q\in W^{1,r}(\Omega)$. 
In fact, the product $\inprod{\gamma_{{\rm n},s}(\vv),\gamma_{0,r}(q)}_{X_r',X_r}$ in \eqref{green}  is the extension of the boundary integral
$$\int_{\partial \Omega} (\vv\cdot\vec{\mathbf \nu})q\, \d\sigma \text{ for }\vv\in (W^{1,s}(\Omega))^n,q\in W^{1,r}(\Omega).$$

Finally, we recall an important norm estimate, see \cite [Inequality (4.2)]{BN1990} or \cite[Lemma A.3]{KS16}.

\begin{lemma}\label{dualnorm}
Let $r,s\in(1,\infty)$ be  H\"older conjugates of each other and let $V=\mathbf W_s({\rm div},\Omega)$. 
Then there exists a constant $C_*>0$ such that, for all $q\in L^r(\Omega)$, it holds
\beq\label{supinfcdn}
\norm{q}_{0,r}\le C_*\sup_{\vv \in V\backslash\{0\}} \frac{\int_\Omega (\diver \vv) q \, \d x}{\norm{\vv}_{ V} }.
\eeq
\end{lemma}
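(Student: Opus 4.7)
The plan is to prove the lemma by constructing, for each nonzero $q\in L^r(\Omega)$, an explicit test field $\vv\in V$ whose divergence pairs with $q$ to recover $\|q\|_{0,r}$ while keeping $\|\vv\|_V$ comparable to $\|q\|_{0,r}^{r-1}$. The natural choice is to arrange $\diver \vv=g$, where $g:=|q|^{r-2}q$ (with the convention $g=0$ where $q=0$) is the ``dual'' of $q$ under the $L^r$--$L^s$ pairing. Because $(r-1)s=r$, we have $g\in L^s(\Omega)$ with $\|g\|_{0,s}=\|q\|_{0,r}^{r-1}$.

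To produce such a $\vv$, I would solve the Dirichlet problem
$$-\Delta\phi = g \quad \text{in }\Omega, \qquad \phi=0 \quad \text{on }\partial\Omega,$$
and set $\vv=-\nabla\phi$. By the $L^s$-regularity theory for the Dirichlet Laplacian (the $L^s$ form of Agmon--Douglis--Nirenberg on a smooth domain, or equivalently the statement already used in \cite{BN1990,KS16}), there is $C=C(s,\Omega)>0$ with $\|\phi\|_{2,s}\le C\|g\|_{0,s}$. Consequently $\vv\in\mathbf L^s(\Omega)$ and $\diver\vv=-\Delta\phi=g\in L^s(\Omega)$, so $\vv\in V$ with
$$\|\vv\|_V \le C'\|q\|_{0,r}^{r-1}$$
for some constant $C'$ depending only on $s$ and $\Omega$.

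Pairing $\diver\vv$ with $q$ then gives
$$\int_\Omega (\diver\vv)\,q\,\d x = \int_\Omega g\,q\,\d x = \int_\Omega |q|^r\,\d x = \|q\|_{0,r}^r.$$
For $q\not\equiv 0$ this forces $\vv\neq 0$, and dividing by $\|\vv\|_V$ produces $\|q\|_{0,r}/C'$ as a lower bound on a single quotient entering the supremum in \eqref{supinfcdn}. Taking the supremum and setting $C_*=C'$ closes the argument; the case $q=0$ is trivial.

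The main technical input, and really the only nonelementary step, is the $W^{2,s}$-estimate for the Dirichlet Laplacian. On a $C^{1,1}$ or smoother boundary this is entirely classical; on a $C^1$ boundary one can either regularize the domain and pass to the limit, or simply invoke the inequality directly from \cite{BN1990,KS16}, where this bound is already proved in the precise form needed here. Everything else reduces to the short algebraic manipulation above, so this regularity estimate is where I expect the only real obstacle to lie.
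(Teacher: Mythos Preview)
Your argument is correct, but it differs from the paper's proof in the construction of the test field. You solve the linear Dirichlet problem $-\Delta\phi=g$ and invoke $W^{2,s}$ Calder\'on--Zygmund regularity to control $\vv=-\nabla\phi$. The paper instead solves the nonlinear $r$-Laplacian problem
\[
\int_\Omega |\nabla w|^{r-2}\nabla w\cdot\nabla v\,\d x=\int_\Omega |q|^{r-2}q\,v\,\d x\quad\text{for all }v\in W^{1,r}_0(\Omega),
\]
via Browder--Minty, and sets $\uu=-|\nabla w|^{r-2}\nabla w$. Testing with $v=w$ and the Poincar\'e inequality give $\|\nabla w\|_{0,r}\le C\|q\|_{0,r}$, whence $\|\uu\|_{0,s}^s=\|\nabla w\|_{0,r}^r\le C\|q\|_{0,r}^r$ and $\diver\uu=|q|^{r-2}q$, so $\|\uu\|_V\le C\|q\|_{0,r}^{r-1}$ exactly as you obtain.

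The trade-off is this: your route is conceptually simpler (a linear PDE) but leans on $W^{2,s}$ elliptic regularity, which is classical on $C^{1,1}$ domains yet not immediate on the $C^1$ domains the paper assumes---you correctly flag this as the one nontrivial ingredient. The paper's route avoids that issue entirely: it needs only Browder--Minty (already used elsewhere in the paper) and the Poincar\'e inequality, both of which hold on bounded $C^1$ domains without further work. So the paper's argument is more self-contained within its stated hypotheses, while yours would be the natural choice if $C^{1,1}$ regularity were available.
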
 

A direct proof of Lemma \ref{dualnorm} is given in Appendix \ref{apA}.

\section{Fundamental properties (I)}\label{mod1sec}
Let $B:\R^n\times\R^n\times \R^n\to\R^n$ be defined by 
\beq\label{Bnd}
B(u,v,w)= {\rm diag}[u^T  A_1 v,\ldots, u^T  A_n v]w \quad \text{ for }u,v,w\in\R^n,
\eeq 
where each $A_i$, for $i=1,2,\ldots,n$,  is a nonzero $n\times n$ matrix. This form of $B$ is a generalization of the corresponding term in \eqref{BB0}. 

Our focus in this section is the function $G_B: \R^n\to\R^n$ defined by
\beq\label{GBdef}
G_B(u)=\begin{cases}
    0& \text{ for } u=0,\\
   \begin{displaystyle} \frac{1}{|u|}\end{displaystyle} B(u,u,u)& \text{ for } u\in \R^n\setminus\{0\}.
\end{cases}
\eeq
The following are immediate  properties of $B$ and $G_B$.

\begin{lemma}\label{Bprop}
The following statements hold true.
\begin{enumerate}[label=\tnum]
\item $B$ is a trilinear mapping, and hence, it is continuous. 

\item If $A_i$ is symmetric for all $i=1,2,\ldots,n$, then 
\beq 
\label{uvsym} 
B(u,v,w)=B(v,u,w) \text { for any $u,v,w\in\R^n$.}
\eeq 

\item  If there is $C>0$ such that, for all $i=1,2,\ldots,n$,  
\beq\label{Aihyp}
u^T A_i u\ge C|u|^2 \text{ for any } u,v\in\R^n,
\eeq
then one has 
\beq \label{Blow}
B(u,u,u)\cdot u\ge C|u|^4\text{ and }
G_B(u)\cdot u\ge C|u|^3 \text{ for all }u\in \R^n.
\eeq

\item For any $u,v,w\in\R^n$,
\beq\label{BM}
|B(u,v,w)|\le M_*|u|\,|v|\,|w|,
\text{ where }M_*=\left(\sum_{i=1}^n |A_i|^2\right )^{1/2}.
\eeq 
Consequently, 
\beq \label{Gup}
|G_B(u)|\le M_*|u|^2 \text{ for all $u\in\R^n$.}
\eeq 

\item The function $G_B$ is continuous. More explicitly,  one has, for any $u,v\in \R^n$,
\beq\label{co}
\left |G_B(u)-G_B(v)\right |
\le 2M_*(|u| +|v| )|u-v|.
\eeq
\end{enumerate}
\end{lemma}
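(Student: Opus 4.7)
The plan is to dispatch (i)--(iv) by direct componentwise calculation, since they follow easily from the bilinear structure of $u^T A_i v$, and then to focus the real effort on (v). For (i), each component $(B(u,v,w))_i = (u^T A_i v)w_i$ is trilinear in $(u,v,w)$, giving both trilinearity and continuity. For (ii), the symmetry of each $A_i$ makes $u^T A_i v = v^T A_i u$, so the diagonal matrices defining $B(u,v,w)$ and $B(v,u,w)$ coincide. For (iii), coercivity \eqref{Aihyp} gives
\[
B(u,u,u)\cdot u \;=\; \sum_{i=1}^n (u^T A_i u)\,u_i^2 \;\ge\; C|u|^2\sum_{i=1}^n u_i^2 \;=\; C|u|^4,
\]
and division by $|u|$ produces the bound on $G_B(u)\cdot u$. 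For (iv), combining the elementary inequality $|u^T A_i v|\le |A_i||u||v|$ with $\sum_i|A_i|^2 w_i^2 \le M_*^2|w|^2$ yields $|B(u,v,w)|\le M_*|u||v||w|$, from which \eqref{Gup} is immediate.

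The substance of the lemma is (v). The key observation is that for $u\ne 0$ one has $G_B(u) = B(u,\hat u, u)$ where $\hat u = u/|u|$, since $(B(u,\hat u, u))_i = \bigl((u^T A_i u)/|u|\bigr)u_i$. This recasts $G_B$ as a trilinear expression, which I will exploit via the telescoping decomposition
\[
G_B(u)-G_B(v) \;=\; B(u-v,\hat u,u) \;+\; B(v,\hat u - \hat v,u) \;+\; B(v,\hat v, u-v).
\]
Applying the trilinear bound from (iv) to the first and third summands immediately gives the easy contributions $M_*|u||u-v|$ and $M_*|v||u-v|$.

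The main obstacle is the middle summand, which requires controlling $|\hat u - \hat v|$ in terms of $|u-v|$. For this I will use the elementary identity
\[
\hat u - \hat v \;=\; \frac{(u-v)|v| + v(|v|-|u|)}{|u||v|}
\]
together with the reverse triangle inequality $||u|-|v||\le |u-v|$ to deduce $|\hat u - \hat v|\le 2|u-v|/|v|$, and symmetrically $\le 2|u-v|/|u|$, hence $|\hat u - \hat v|\le 2|u-v|/\max(|u|,|v|)$. The middle summand is then bounded by $2M_*|u||v||u-v|/\max(|u|,|v|) = 2M_*\min(|u|,|v|)|u-v|\le M_*(|u|+|v|)|u-v|$. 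Summing the three contributions yields the desired $2M_*(|u|+|v|)|u-v|$. Finally, the degenerate case where $u=0$ or $v=0$ is handled separately using \eqref{Gup}: if, say, $u=0$, then $|G_B(u)-G_B(v)|=|G_B(v)|\le M_*|v|^2$, while $2M_*(|u|+|v|)|u-v|=2M_*|v|^2$, so \eqref{co} still holds.
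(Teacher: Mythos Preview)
Your proof is correct but takes a different route from the paper for part (v). The paper parametrizes the segment $w(t)=tu+(1-t)v$, writes $G_B(u)-G_B(v)=\int_0^1 h'(t)\,\d t$ with $h(t)=|w(t)|^{-1}B(w(t),w(t),w(t))$, bounds $|h'(t)|\le 4M_*|w(t)|\,|u-v|$, and integrates using $\int_0^1|w(t)|\,\d t\le \tfrac12(|u|+|v|)$; the case where the segment passes through the origin is handled by a perturbation argument. You instead rewrite $G_B(u)=B(u,\hat u,u)$ and telescope the trilinear form algebraically, together with the elementary estimate $|\hat u-\hat v|\le 2|u-v|/\max(|u|,|v|)$. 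Your argument is more elementary and sidesteps both the calculus and the perturbation step, while the paper's integral-along-a-segment device is precisely the template it reuses for all the monotonicity proofs in Sections~\ref{mod1sec} and~\ref{mod2sec}, so it is introduced here partly as infrastructure. One cosmetic slip: the identity you display for $\hat u-\hat v$ actually yields the bound $2|u-v|/|u|$ first (not $2|u-v|/|v|$), with the $|v|$-version coming from the symmetric decomposition; this does not affect the conclusion.
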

\begin{proof}
Parts (i)--(iii) are obvious. For part (iv), we have  
\beqs
|B(u,v,w)|\le \left(\sum_{i=1}^n |u^T  A_i v|^2\right)^{1/2}|w|\le \left(\sum_{i=1}^n |A_i|^2\right)^{1/2}|u|\, |v|\, |w|,
\eeqs
which proves \eqref{BM}. It is clear that inequality \eqref{Gup} follows \eqref{BM} with $u=v=w\ne 0$ and $G_B(0)=0$. 

We prove part (v) now. The fact that $G_B$ is continuous comes from part (i) and \eqref{Gup}.
To prove \eqref{co}, we first consider the case when the line segment connecting $u$ and $v$ does not contain the origin. 
For $t\in [0,1]$, let $w(t)=tu+(1-t)v$.
Then $w(t)\ne 0$ for all $t\in [0,1]$.
Define $z=u-v$    and 
$$h(t)=\frac1{|w(t)|}B(w(t),w(t),w(t))\text{ for }t\in[0,1].$$
Applying the Fundamental Theorem of Calculus to the function $h(t)$ on the interval $[0,1]$, we obtain
\beq \label{B1}
\left |G_B(u)-G_B(v)\right |=|h(1)-h(0)|
\le \int_0^1 |h'(t)| \d t.
\eeq 
Calculating $h'(t)$, we have 
\beqs 
 h'(t)=-\frac{1}{|w|^2}\frac{w\cdot z}{|w|}B(w,w,w)+\frac{1}{|w|}(B(z,w,w)+B(w,z,w)+B(w,w,z)).
 \eeqs 
Applying inequality \eqref{BM} yields 
\beq \label{B2}
|h'(t)|\le 4M_* |w||z|. 
\eeq
Note that
\beq  \label{B3}
\int_0^1 |w(t)| \d t\le \int_0^1 t|u| +(1-t)|v| \d t=\frac12(|u| +|v| ).
\eeq 
Combining \eqref{B1}, \eqref{B2} and \eqref{B3} yields \eqref{co}.

Now, consider the case when the line segment connecting $u$ and $v$ contains the origin.
Since the dimension $n$ is greater than one, for $\varep>0$ sufficiently small, we can find $u^\varep,v^\varep\in \R^n\setminus \{0\}$ such that  the line segment connecting $u^\varep$ and $v^\varep$ does not contain the origin, and $u^\varep\to u$, $v^\varep\to v$ as $\varep\to 0$. 
Applying the inequality \eqref{co} to $u^\varep$ and $v^\varep$, and then letting $\varep\to0$, we obtain \eqref{co} for $u$ and $v$.
\end{proof}

Although the coercivity of $G_B$ in \eqref{Blow} is useful in later investigation of the corresponding systems of PDE, it is not enough. The following monotonicities will be needed.

\begin{definition}\label{Mdef}
Let  $F$ be a function from $\R^n$ to $\R^n$.
\begin{enumerate}[label=\tnum]
    \item We say $F$ is monotone if  
    \beqs
 (F(u)-F(v))\cdot (u-v)\ge 0 \text{ for any $u,v\in \R^n$.}
 \eeqs

 \item  
For $\alpha>0$, we say $F$ is (power) $\alpha$-monotone if there exists a constant $C>0$ such that
 \beqs
 (F(u)-F(v))\cdot (u-v)
 \ge C|u-v|^\alpha \text{ for any $u,v\in \R^n$.}
 \eeqs
\end{enumerate}
\end{definition}

Clearly, if $F$ is $\alpha$-monotone, then it is monotone.
Our goal is to establish the monotonicities in Definition \ref{Mdef} for the function $G_B$.

\subsection{The two-dimensional case}\label{ss2d}
We consider the dimension $n=2$ and a special case of $B$ in \eqref{Bnd}.
Specifically, given $a,b,c,d>0$, we set
\beqs%\label{A12}
A_1=\begin{pmatrix}
     a&0\\0&b
    \end{pmatrix},\quad 
A_2=\begin{pmatrix}
     c&0\\0&d
    \end{pmatrix},
\eeqs 
and write \eqref{Bnd} explicitly as 
\beqs %\label{B2d}
B(u,v,w)=\begin{pmatrix}
     u^T  A_1 v&0\\0&u^T  A_2 v
    \end{pmatrix} w=\begin{pmatrix}
       (au_1v_1+bu_2v_2)w_1\\ (cu_1v_1+du_2v_2)w_2
      \end{pmatrix}
\eeqs
for vectors $u=(u_1,u_2)$, $v=(v_1,v_2)$ and $w=(w_1,w_2)$ in $\R^2$.

Then \eqref{Aihyp} is true with $c_0=\min\{a,b,c,d\}$.
Moreover, $A_1$ and $A_2$ are symmetric and, consequently, we have property \eqref{uvsym}.

For the sake of convenience in the calculations below, we denote $|v|_i=(v^T  A_i v)^{1/2}$ for $i=1,2$.
Then each $|\cdot|_i$ is a norm on $\R^n$.

\begin{example}[Counter example for the monotonicities]\label{nomono}
We give an example to show that the monotonicities in Definition \ref{Mdef} are not automatically satisfied for any positive values of $a,b,c,d$.

\begin{enumerate}[label=\rnum] 
\item \label{nona} 
Let $a=d=1$, $b=c=5$, $u_*=(2,2)$ and $v_*=(3,1)$.
Then $|u_*|=2\sqrt 2$, $|v_*|=\sqrt{10}$,  $B(u_*,u_*,u_*)=48(1,1)$, $B(v_*,v_*,v_*)=(42,46)$ and $u_*-v_*=(-1,1)$.
We have 
\begin{align*}
\left (\frac1{|u_*|}B(u_*,u_*,u_*)-\frac1{|v_*|}B(v_*,v_*,v_*)\right)\cdot (u_*-v_*)
=\frac{-4}{\sqrt{10}}<0.    
\end{align*}
Thus, $G_B$ is not monotone.

\item\label{nonb} Let $A_0$ be any $2\times 2$ matrix.
Set  $u=tu_*$ and $v=t v_*$ for $t>0$. Then
\begin{align*}
\left (A_0(u-v)+\frac1{|u|}B(u,u,u)-\frac1{|v|}B(v,v,v)\right)\cdot (u-v)
=t^2(-1,1)\cdot A_0(-1,1)  - \frac{4t^3}{\sqrt{10}}
\end{align*}
which is  negative for sufficiently large $t$.
Therefore, the function $u\mapsto A_0u+G_B(u)$ is not monotone.
\end{enumerate}
\end{example}

Because of Example \ref{nomono}, it is not guaranteed that the experimental data \eqref{abcd} yields a corresonding monotone function $G_B$. Therefore, different sufficient conditions are needed.

\begin{theorem}\label{Monothm2}
The following statements hold true.
\begin{enumerate}[label=\tnum]
\item If 
\beq\label{simcond1}
\left( b+c - \frac{|w|_1^2+|w|_2^2}{2} \right)^2\le  4ad\text{ for all $w\in\R^2$ with $|w|=1$},
\eeq
then $G_B$ is monotone.

\item If the inequality in \eqref{simcond1} is strict,  
then $G_B$ is $3$-monotone.
\end{enumerate}
\end{theorem}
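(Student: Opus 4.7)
The plan is to derive both (i) and (ii) from a Jacobian criterion. Along the segment $w(t)=tu+(1-t)v$, the fundamental theorem of calculus gives
\begin{align*}
(G_B(u)-G_B(v))\cdot(u-v)=\int_0^1 (u-v)^\top \nabla G_B(w(t))\,(u-v)\,dt,
\end{align*}
so it suffices to control the symmetric part of $\nabla G_B$. First I would check that $G_B\in C^1(\R^2)$: off the origin this is immediate from the quotient formula, and since $G_B(\lambda w)=\lambda|\lambda|G_B(w)$ forces $\nabla G_B(\lambda w)=|\lambda|\nabla G_B(w)$ for $\lambda\ne 0$, the Jacobian extends continuously to $0$ by the zero matrix. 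A segment passing through the origin could alternatively be handled by the approximation argument used at the end of the proof of Lemma~\ref{Bprop}(v).

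A direct computation with $r=|w|$, $p=aw_1^2+bw_2^2=|w|_1^2$, and $q=cw_1^2+dw_2^2=|w|_2^2$ yields the diagonal entries of the Jacobian,
\begin{align*}
M_{11}=\frac{pw_2^2+2aw_1^2 r^2}{r^3},\qquad M_{22}=\frac{qw_1^2+2dw_2^2 r^2}{r^3},
\end{align*}
while the coefficient of $\xi_1\xi_2$ in the quadratic form $\xi^\top\nabla G_B(w)\xi$ is
\begin{align*}
M_{12}+M_{21}=\frac{2w_1w_2}{r^3}\Bigl[(b+c)r^2-\tfrac{p+q}{2}\Bigr].
\end{align*}
By the $1$-homogeneity of $\nabla G_B$ it suffices to analyze these expressions on the unit circle $r=1$, on which the bracket becomes exactly $K:=b+c-\tfrac12(|w|_1^2+|w|_2^2)$, the very quantity appearing in \eqref{simcond1}.

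For (i), on the unit circle $M_{11},M_{22}\ge 0$ are evident, and expanding $(pw_2^2+2aw_1^2)(qw_1^2+2dw_2^2)$ and dropping three of its four nonnegative summands gives $M_{11}M_{22}\ge 4ad\,w_1^2w_2^2$. Combining with $K^2\le 4ad$,
\begin{align*}
(M_{12}+M_{21})^2=4w_1^2w_2^2 K^2\le 16ad\,w_1^2w_2^2\le 4M_{11}M_{22},
\end{align*}
so the symmetrized matrix is positive semidefinite and integrating proves monotonicity. For (ii), the strict inequality in \eqref{simcond1} and compactness of the unit circle give $K^2\le 4ad-\delta$ for some $\delta>0$; when $w_1w_2\ne 0$ this upgrades the above to a strict determinant inequality, while when $w_1w_2=0$ the positivity of $M_{11}M_{22}$ is visible from the surviving summands $2aq w_1^4+2pd w_2^4$. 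Compactness then gives $M(w)\succeq c_0 I$ for all unit $w$, hence $M(w)\succeq c_0|w|I$ in general by $1$-homogeneity, and inequality \eqref{intineq} with $\alpha=1$ yields
\begin{align*}
(G_B(u)-G_B(v))\cdot(u-v)\ge c_0|u-v|^2\int_0^1|w(t)|\,dt\ge \tfrac{c_0}{8}|u-v|^3,
\end{align*}
giving $3$-monotonicity. The main obstacle is the algebraic simplification identifying $M_{12}+M_{21}$ with $2w_1w_2 K$ on the unit circle, which is exactly what converts the ad hoc hypothesis \eqref{simcond1} into the standard $2\times 2$ positive-semidefiniteness criterion $(M_{12}+M_{21})^2\le 4M_{11}M_{22}$.
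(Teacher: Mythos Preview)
Your proof is correct and follows essentially the same strategy as the paper: differentiate $h(t)=G_B(w(t))\cdot z$ along the segment and show the resulting quadratic form in $z$ is nonnegative (resp.\ uniformly positive). The organization of the algebra differs. The paper decomposes $h'(t)=(J_1+J_2+J_3)/|w|$, drops the nonnegative part of $J_1+J_3$, and obtains a quadratic in the variables $(w_1z_1,w_2z_2)$ whose discriminant is \emph{exactly} $K^2-4ad$, so \eqref{simcond1} applies with no slack; you instead keep the full Jacobian in $(z_1,z_2)$ and use the extra inequality $M_{11}M_{22}\ge 4ad\,w_1^2w_2^2$ to reach the same conclusion. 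For part (ii) your compactness argument (smallest eigenvalue of the symmetric Jacobian on the unit circle) is a cleaner substitute for the paper's parameter-$\theta$ splitting; both ultimately feed into \eqref{intineq} the same way.
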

\begin{proof}
We prove part (i) first. Assume \eqref{simcond1}. Let $u,v\in \R^2$.
By the same perturbation arguments in the last part of the proof of Lemma \ref{Bprop}, 
we can assume, without loss of generality, that the line segment connecting $u$ and $v$ does not contain the origin. 

Let $z=u-v$, and, for $t\in[0,1]$, $w(t)=tu+(1-t)v$ and
\beq \label{Bwz}
h(t)=\frac1{|w(t)|}B(w(t),w(t),w(t))\cdot z.
\eeq 
Note that $w(t)\ne 0$ for all $t\in[0,1]$.
We have 
\beq\label{Iint}
I\eqdef \left(\frac{1}{|u|}B(u,u,u)-\frac{1}{|v|}B(v,v,v)\right)\cdot (u-v)
=h(1)-h(0)=\int_0^1 h'(t)\d t.
\eeq
We calculate the derivative
\begin{align*}
 h'(t)&=-\frac{w\cdot z}{|w|^3}B(w,w,w)\cdot z+\frac{1}{|w|}(B(z,w,w)+B(w,z,w)+B(w,w,z))\cdot z\\
 &=\frac{1}{|w|} (J_1+J_2+J_3),
 \end{align*}
 where
 \begin{align*}
J_1 &=-\frac{w\cdot z}{|w|^2}B(w,w,w)\cdot z, \quad
J_2=(B(w,z,w)+B(z,w,w))\cdot z, \quad
J_3=B(w,w,z)\cdot z.
\end{align*}

More specifically, with $z=(z_1,z_2)$ and $w=(w_1,w_2)$, we have
\begin{align}
J_1 &=-\frac{w\cdot z}{|w|^2}\Big[(w^TA_1 w) w_1z_1+(w^TA_2 w) w_2z_2\Big] \notag\\
&=-\frac1{|w|^2}\Big[|w|_1^2 w_1^2z_1^2 + (|w|_1^2+|w|_2^2)w_1w_2z_1z_2+|w|_2^2 w_2^2z_2^2\Big].\label{J1}
\end{align}

By the symmetry \eqref{uvsym},
\begin{align}
J_2 &=2B(w,z,w)\cdot z=2\Big[(w^TA_1 z) w_1z_1 +(w^TA_2 z) w_2 z_2\Big] \notag \\
&=2\Big[a w_1^2z_1^2 +(b+c)w_1w_2z_1z_2+d w_2^2 z_2^2\Big].\label{J2}
\end{align} 

The last term $J_3$ is 
\beq\label{J3}
J_3 =(w^TA_1 w) z_1^2 +(w^TA_2 w) z_2^2
=|w|_1^2 z_1^2 +|w|_2^2 z_2^2.
\eeq

Combining the terms $J_1$ and $J_3$, we have
\beqs 
 J_1+J_3=\frac1{|w|^2}\Big[-|w|_1^2 w_1^2z_1^2 - (|w|_1^2+|w|_2^2)w_1w_2z_1z_2-|w|_2^2 w_2^2z_2^2
 +(|w|_1^2 z_1^2+|w|_2^2 z_2^2)(w_1^2+w_2^2)\Big]\\
\eeqs 
 which yields
\beq\label{J13}
J_1+J_3
=\frac1{|w|^2}\Big[|w|_1^2 z_1^2w_2^2 +|w|_2^2 z_2^2w_1^2-(|w|_1^2+|w|_2^2)w_1w_2z_1z_2\Big].
\eeq 
It follows \eqref{J13} that
\beqs 
 J_1+J_3\ge -\frac1{|w|^2}(|w|_1^2+|w|_2^2)w_1w_2z_1z_2.
\eeqs 
Combining this with formula \eqref{J2} of $J_2$, we obtain
\beq\label{hquad}
 h'(t)
   \ge \frac{2}{|w|}\left\{ a w_1^2z_1^2 +\left[(b+c) - \frac{|w|_1^2+|w|_2^2 }{2|w|^2} \right]w_1w_2z_1z_2+d w_2^2 z_2^2\right\}.
\eeq

Fix $t\in[0,1]$. Let $\alpha(t)=w_1(t)z_1$ and $\beta(t)=w_2(t)z_2$.
The expression between the parentheses on the right-hand side of \eqref{hquad} is $Q(\alpha(t),\beta(t))$, where $Q$ is the following $t$-dependent  quadratic function 
\beqs
Q (\alpha,\beta)=a \alpha^2 +\left[(b+c) - \frac{|w(t)|_1^2+ |w(t)|_2^2 }{2|w(t)|^2} \right]\alpha\beta+d \beta^2 \text{ for $\alpha,\beta\in\R$.}
\eeqs
By \eqref{simcond1}, we have $Q(\alpha,\beta)\ge 0$ for all $\alpha,\beta\in \R$.
Consequently, $h'(t)\ge 0$ for all $t\in[0,1]$, which gives 
\beq \label{IBu}
\left(\frac{1}{|u|}B(u,u,u)-\frac{1}{|v|}B(v,v,v)\right)\cdot (u-v)\ge 0
\eeq 
 thanks to \eqref{Iint}. 
 Therefore, $G_B$ is monotone.

\medskip
We prove part (ii) now.   Assume \eqref{simcond1} with the strict inequality.
Similar to the proof of part (i), we can assume  that the line segment connecting $u$ and $v$ does not contain the origin.
We refine the estimates in part (i).  Let $\theta,\theta'\in (0,1)$ with $\theta+\theta'=1$.
Elementary manipulations starting with \eqref{J1} and \eqref{J3} give 
\begin{align*}
J_1+J_3&= -\frac1{|w|^2}\Big[(\theta+\theta')|w|_1^2 w_1^2z_1^2 + (|w|_1^2+|w|_2^2)w_1w_2z_1z_2+(\theta+\theta')|w|_2^2 w_2^2z_2^2\Big]+|w|_1^2 z_1^2 +|w|_2^2 z_2^2\\
&=\frac1{|w|^2}\Big[-\theta (|w|_1^2 w_1^2  z_1^2 +|w|_2^2 w_2^2 z_2^2)
- (|w|_1^2+|w|_2^2)w_1w_2z_1z_2\Big]+J_*,
\end{align*}
where
\beq\label{Jztp}
J_*=\left(1-\theta'\frac{w_1^2}{|w|^2}\right)|w|_1^2 z_1^2 
+\left(1- \theta'\frac{w_2^2}{|w|^2}\right)|w|_2^2 z_2^2.
\eeq
Adding this to the formula \eqref{J2} of $J_2$ and  combining the like-terms, we obtain
\beq\label{wh1}
 |w|h'(t)
=J_1+J_2+J_3  = J_*+2J_4,
\eeq 
where
\beqs
J_4
=\left(a-\theta \frac{|w|_1^2}{2|w|^2}\right) w_1^2z_1^2 
+\left[(b+c)- \frac{|w|_1^2+|w|_2^2}{2|w|^2}\right]w_1w_2z_1z_2  +\left(d-\theta\frac{|w|_2^2}{2|w|^2}\right) w_2^2 z_2^2.
\eeqs 

Letting $\bar\alpha=w_1z_1$ and $\bar\beta =w_2z_2$, we rewrite $J_4$ as
\beq \label{J4}
\begin{aligned} 
J_4
&=\left(a-\theta \frac{|w|_1^2}{2|w|^2}\right) \bar\alpha^2 
+\left[(b+c)- \frac{|w|_1^2+|w|_2^2}{2|w|^2}\right]\bar\alpha\bar\beta 
+\left(d-\theta\frac{|w|_2^2}{2|w|^2}\right) \bar\beta^2.
\end{aligned}
\eeq 
The right-hand side of \eqref{J4} is a quadratic function of $\bar\alpha$ and $\bar\beta$.
Note that
\beq\label{norms12}
C_1\le \frac{|w|_1^2}{|w|^2},\frac{|w|_2^2}{|w|^2}\le C_2 \text{ for all $w\in \R^2\setminus\{0\}$.}
\eeq
Thanks to the strict inequalities in \eqref{simcond1},  we can choose $\theta\in(0,1)$ sufficiently small so that 
\beqs
a-C_2\theta>0 \text{ and }
\max_{|w|=1} \left[b+c- \frac{|w|_1^2+ |w|_2^2}{2}\right]^2\le 4\left(a-\frac{C_2\theta }{2}\right) \left(d-\frac{C_2\theta }{2}\right) .
\eeqs
Therefore, for all $w\in \R^2\setminus\{0\}$,  
\beq \label{atheta}
a-\theta \frac{|w|_1^2}{|w|^2} >0
\eeq 
and
\beq\label{bctheta}
\left[b+c- \frac{|w|_1^2+ |w|_2^2}{2|w|^2}\right]^2\le 4\left(a-\theta \frac{|w|_1^2}{2|w|^2}\right) \left(d-\theta\frac{|w|_2^2}{2|w|^2}\right) .
\eeq
Then \eqref{atheta} and \eqref{bctheta} imply that  $J_4\ge 0$ for all $t\in[0,1]$.
Therefore, $ |w|h'(t)\ge J_*$. 

Using the fact $w_1^2,w_2^2\le |w|^2$ in the formula \eqref{Jztp} of $J_*$ and then property \eqref{norms12}, we have
\beq\label{Jest2}
 J_*
 \ge (1-\theta')(|w|_1^2 z_1^2 +|w|_2^2 z_2^2)
 \ge C_1(1-\theta')|w|^2 |z|^2= c_*|w(t)|^2|z|^2,\text{ where } c_*=C_1\theta .
\eeq
It follows that $h'(t)\ge c_* |w(t)||z|^2$.
Combining this estimate of $h'(t)$ with \eqref{Iint} and applying inequality \eqref{intineq} with $\alpha=1$, we have
\beqs%\label{Izbound}
I\ge c_* |z|^2 \int_0^1 |w(t)|\d t \ge \frac{c_*}{8}|z|\cdot |z|^2=\frac{c_*|z|^3}{8}.
\eeqs
Thus, we obtain 
 \beq\label{IB3}
 \left(\frac{1}{|u|}B(u,u,u)-\frac{1}{|v|}B(v,v,v)\right)\cdot (u-v)
 \ge C|u-v|^3
 \eeq
with $C=c_*/8$. Therefore, $G_B$ is $3$-monotone.
\end{proof}

We find a consequence of Theorem \ref{Monothm2} with simpler conditions which can be checked easily. 

\begin{corollary}\label{Cor3}
If 
\beq\label{ok1}
\max\{a+c,b+d\}\le 2(b+c+2\sqrt{ad})
\eeq
and
\beq \label{ok2}
2(b+c-2\sqrt{ad})\le \min\{a+c,b+d\},
\eeq
then $G_B$ is monotone.

If both inequalities \eqref{ok1} and \eqref{ok2} are strict, then $G_B$ is $3$-monotone.
\end{corollary}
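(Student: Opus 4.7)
\textbf{Proof plan for Corollary \ref{Cor3}.}
The strategy is to recast the single quadratic inequality \eqref{simcond1} appearing in Theorem~\ref{Monothm2} as a two-sided bound on the quantity $|w|_1^2+|w|_2^2$, and then explicitly compute the range of this quantity as $w$ varies over the unit circle in $\R^2$. Two-sided bounds at the extremes of this range will match exactly \eqref{ok1} and \eqref{ok2}.

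First I will rewrite \eqref{simcond1} in the equivalent form
\[
2(b+c-2\sqrt{ad}) \le |w|_1^2+|w|_2^2 \le 2(b+c+2\sqrt{ad}) \quad\text{for all $|w|=1$,}
\]
simply by taking square roots in \eqref{simcond1}. Next, using the diagonal form of $A_1,A_2$ together with $w_1^2+w_2^2=1$, I will compute
\[
|w|_1^2+|w|_2^2 = (a+c)w_1^2 + (b+d)w_2^2 = (b+d) + [(a+c)-(b+d)]\,w_1^2.
\]
This is an affine function of $s:=w_1^2\in[0,1]$, so as $w$ ranges over the unit circle, the set of values of $|w|_1^2+|w|_2^2$ is exactly the closed interval with endpoints $a+c$ and $b+d$, i.e.\ $[\min\{a+c,b+d\},\max\{a+c,b+d\}]$.

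Therefore the two-sided inequality holds for every $w$ with $|w|=1$ if and only if
\[
\max\{a+c,b+d\}\le 2(b+c+2\sqrt{ad}) \quad\text{and}\quad \min\{a+c,b+d\}\ge 2(b+c-2\sqrt{ad}),
\]
which are precisely \eqref{ok1} and \eqref{ok2}. Assuming these hold, Theorem~\ref{Monothm2}(i) yields monotonicity of $G_B$. If both inequalities are strict, then the resulting two-sided bound on $|w|_1^2+|w|_2^2$ is strict for every $|w|=1$, hence the original inequality \eqref{simcond1} is strict, and Theorem~\ref{Monothm2}(ii) produces the $3$-monotonicity.

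There is no serious obstacle — the only step requiring any care is verifying that the endpoints of the affine function of $s$ are actually attained on $|w|=1$ (which is immediate by taking $w=(1,0)$ and $w=(0,1)$) so that the two inequalities \eqref{ok1}--\eqref{ok2} are both necessary and sufficient for \eqref{simcond1}.
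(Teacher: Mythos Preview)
Your proposal is correct and follows essentially the same route as the paper: both rewrite \eqref{simcond1} as the two-sided bound $2(b+c-2\sqrt{ad})\le |w|_1^2+|w|_2^2\le 2(b+c+2\sqrt{ad})$ for $|w|=1$, observe that $|w|_1^2+|w|_2^2=(a+c)w_1^2+(b+d)w_2^2$ lies between $\min\{a+c,b+d\}$ and $\max\{a+c,b+d\}$, and then invoke Theorem~\ref{Monothm2}. Your explicit affine-in-$w_1^2$ computation yields the slightly stronger observation that \eqref{ok1}--\eqref{ok2} are in fact equivalent to \eqref{simcond1}, whereas the paper only states sufficiency, but this extra precision is not needed for the corollary.
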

\begin{proof}
    Condition \eqref{simcond1} is equivalent to
\beq\label{pos2b}
2(b+c-2\sqrt{ad})\le |w|_1^2+|w|_2^2\le  2(b+c+2\sqrt{ad})\quad \forall |w|=1.
\eeq

Note that
\beq
\min\{a+c,b+d\} |w|^2\le |w|_1^2+|w|_2^2\le \max\{a+c,b+d\}|w|^2.
\eeq
Then \eqref{ok1} are \eqref{ok2} are sufficient conditions for \eqref{pos2b}, and hence $G_B$ is monotone thanks to Theorem \ref{Monothm2}.
The case of strict inequalities in \eqref{ok1} and \eqref{ok2} is similar.
\end{proof}

\begin{example}\label{eg1}
   Define, for $a,b,c,d>0$, the functions
\begin{align*} 
g_1(a,b,c,d)&= 2(b+c+2\sqrt{ad})-\max\{a+c,b+d\},\\
g_2(a,b,c,d)&= \min\{a+c,b+d\}-2(b+c-2\sqrt{ad}).
\end{align*} 

\begin{enumerate}[label=\rnum]
\item For any $a,b>0$, taking $a=d$ and $b=c$, we have
\beq
g_1(a,b,b,a)=3(a+b)>0,\quad g_2(a,b,b,a)=a+b-4(b-a)=5a-3b.
\eeq
If $a=3b/5$, then $G_B$ is monotone. If $a>3b/5$, then $G_B$ is $3$-monotone.

\item In particular, when $a=b=c=d$, the function $G_B$ is $3$-monotone, which is well-known for the generalized Forchheimer equation \eqref{genF}.

\item\label{abstar} Suppose $a_*>3b_*/5>0$. If $(a,b,c,d)$ is sufficiently close to $(a_*,b_*,b_*,a_*)$, then, thanks to the continuity of $g_1$ and $g_2$, one has 
$g_1(a,b,c,d)>0$ and $g_2(a,b,c,d)>0$ which imply $G_B$ is $3$-monotone.

\item If the matrices $A_1$ and $A_2$ are close to $a_* I_2$, for some $a_*>0$, then, thanks to \ref{abstar} with $a_*=b_*$, the function $G_B$ is $3$-monotone.
\end{enumerate}
\end{example}

\begin{example}
With the values $a,b,c,d$ in \eqref{abcd}, we rewrite conditions \eqref{ok1} and \eqref{ok2} as
\beq\label{test1}
b+d\le 2(b+c+2\sqrt{ad})
\text{ and } 
2(b+c-2\sqrt{ad})\le a+c.
\eeq 
The values in \eqref{abcd} satisfy the first inequality in \eqref{test1}, but fail the second one. Thus, the monotonicity of $G_B$ is not yet determined. Therefore, there is a need to derive different  sufficient  conditions for the monotonicities of $G_B$.
\end{example}

Below are other monotonicity conditions which resemble but neither contain nor are contained in those of Theorem \ref{Monothm2}.

\begin{theorem}\label{Monothm1}
The following statements hold true.

\begin{enumerate}[label=\tnum]
    \item\label{M1} If either 
\beq\label{ocond1}
\left[b+c - \frac{(|w|_1+|w|_2)^2 }{2}\right]^2\le 4ad \text{ for all $w\in\R^2$ with $|w|=1$},
\eeq
or
\beq\label{ocond2}
\left[b+c - \frac{(|w|_1-|w|_2)^2 }{2}\right]^2\le 4ad \text{ for all $w\in\R^2$ with $|w|=1$},
\eeq
then $G_B$ is monotone.
 
 \item\label{M2} If either the inequality in \eqref{ocond1} is strict, or the inequality in \eqref{ocond2} is strict, then $G_B$ is $3$-monotone.
\end{enumerate}
\end{theorem}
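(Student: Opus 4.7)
The plan is to mimic the proof of Theorem~\ref{Monothm2} with the single essential change of sharpening the lower bound for $J_1+J_3$ in \eqref{J13} via the AM--GM inequality, which introduces a new sign-sensitive cross term. As in Theorem~\ref{Monothm2}, the perturbation argument reduces the problem to the case where the segment from $v$ to $u$ avoids the origin. Setting $z=u-v$, $w(t)=tu+(1-t)v$, and $h(t)=|w(t)|^{-1}B(w,w,w)\cdot z$, the fundamental theorem of calculus gives $I:=(G_B(u)-G_B(v))\cdot(u-v)=\int_0^1 h'(t)\,dt$, and the formulas \eqref{J1}--\eqref{J3} for $J_1,J_2,J_3$ carry over unchanged. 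The refinement is to apply AM--GM to the first two terms of \eqref{J13}, $|w|_1^2 z_1^2 w_2^2 + |w|_2^2 z_2^2 w_1^2 \ge 2|w|_1|w|_2\,|w_1w_2z_1z_2|$; letting $\sigma\in\{-1,0,+1\}$ denote the sign of $w_1w_2z_1z_2$, this yields $J_1+J_3\ge -(|w|_1-\sigma|w|_2)^2\,w_1w_2z_1z_2/|w|^2$, and combining with \eqref{J2} produces
\[
|w|\,h'(t)\;\ge\; 2\bigl[a\,w_1^2z_1^2 + \Gamma_\sigma\, w_1w_2z_1z_2 + d\,w_2^2z_2^2\bigr],\qquad \Gamma_\sigma:=b+c-\frac{(|w|_1-\sigma|w|_2)^2}{2|w|^2}.
\]

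For part~(i), the crucial observation is that only one of \eqref{ocond1}, \eqref{ocond2} is needed. Writing $\gamma_\pm$ for the value of $\Gamma_\sigma$ at $\sigma=\pm 1$, the $0$-homogeneity of $(|w|_1\pm|w|_2)^2/|w|^2$ in $w$ lets \eqref{ocond1}, \eqref{ocond2} be rewritten as $\gamma_-^2\le 4ad$, $\gamma_+^2\le 4ad$ for all $w\ne 0$. In each sign case $\sigma\in\{\pm1\}$, the quadratic is nonnegative either because the cross term has the ``good'' sign (that is, $\Gamma_\sigma\sigma\ge 0$ makes the three terms all nonnegative) or because the discriminant $\Gamma_\sigma^2\le 4ad$ rules out any sign cancellation. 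Exploiting the chain $\gamma_-\le\gamma_+$: assumption \eqref{ocond1} handles $\sigma=-1$ via the discriminant, and when $\sigma\ge 0$ forces either $\gamma_+\ge 0$ (triviality) or $\gamma_+\ge\gamma_-\ge-2\sqrt{ad}$ (so $\gamma_+^2\le 4ad$); the argument under \eqref{ocond2} is symmetric. The case $\sigma=0$ is trivial. Hence $h'(t)\ge 0$ on $[0,1]$ and $I\ge 0$.

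For part~(ii), I mirror the $\theta$-splitting refinement of Theorem~\ref{Monothm2}(ii): for $\theta,\theta'\in(0,1)$ with $\theta+\theta'=1$, rewrite $|w|h'(t)=J_*+2J_4$ where $J_*$ is exactly as in \eqref{Jztp} and contributes the positive bound $c_*|w|^2|z|^2$ via \eqref{Jest2}--\eqref{norms12}, while $J_4$ is the modified quadratic with outer coefficients $a-\theta|w|_1^2/(2|w|^2)$ and $d-\theta|w|_2^2/(2|w|^2)$. The uniform bounds \eqref{norms12} on $|w|_i^2/|w|^2$, combined with the strict inequality in \eqref{ocond1} or \eqref{ocond2}, provide the margin needed to choose $\theta>0$ so small that the sign/discriminant analysis of part~(i) still delivers $J_4\ge 0$. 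Then $h'(t)\ge c_*|w(t)|\,|z|^2$, and \eqref{intineq} with $\alpha=1$ gives $\int_0^1|w(t)|\,dt\ge|z|/8$, yielding $I\ge C|u-v|^3$.

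The main obstacle is the sign-dependent case analysis in part~(i): it is not transparent that a single condition suffices, and the argument requires exploiting the chain $\gamma_-\le\gamma_+$ together with the observation that the ``good-sign'' scenario for one value of $\sigma$ is exactly complementary to the case in which the discriminant is invoked under the other condition. The second most delicate step, in part~(ii), is verifying that the $\theta$-perturbed coefficients of $J_4$ still satisfy the sign and discriminant conditions uniformly in $w$, which is where the bounds \eqref{norms12} become essential.
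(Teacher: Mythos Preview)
Your part~(i) is correct, though the sign-case analysis is an unnecessary detour: since $|w|_1^2 z_1^2 w_2^2 + |w|_2^2 z_2^2 w_1^2 \ge \pm 2|w|_1|w|_2\,w_1w_2z_1z_2$ holds for \emph{either} fixed choice of sign, the paper simply selects the sign according to which of \eqref{ocond1}, \eqref{ocond2} is assumed and applies the discriminant criterion to the resulting quadratic $Q_\mp$ globally in $(\alpha,\beta)$, with no splitting on $\sigma=\mathrm{sign}(\alpha\beta)$.

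Part~(ii), however, has a genuine gap. The exact identity $|w|h'(t)=J_*+2J_4$ you borrow from Theorem~\ref{Monothm2}(ii) forces $J_4$ to have cross coefficient $\gamma_0=(b+c)-(|w|_1^2+|w|_2^2)/(2|w|^2)$, not the $\sigma$-dependent $\Gamma_\sigma$ of your part~(i). Your sign/discriminant argument does \emph{not} give $J_4\ge 0$ under \eqref{ocond1} alone: with the data \eqref{abcd} (which satisfy the strict form of \eqref{ocond1} by Example~\ref{eg2}) and $w$ near $(1,0)$ one has $\gamma_0>2\sqrt{ad}$, and since $\alpha\beta$ can be negative the quadratic $a'\alpha^2+\gamma_0\alpha\beta+d'\beta^2$ takes negative values even at $\theta=0$. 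To obtain a cross coefficient of the form $(b+c)-(|w|_1\pm|w|_2)^2/(2|w|^2)$ one must add and subtract $\pm 2|w|_1|w|_2\,w_1w_2z_1z_2/|w|^2$; the leftover cross term cannot be dropped and must instead be absorbed into $J_*$. The paper does exactly this, arriving at a second quadratic $\widetilde Q(|w|_1 z_1,|w|_2 z_2)$ whose positivity rests on the separate inequality $w_1^2w_2^2<(1-\theta'w_1^2)(1-\theta'w_2^2)$ on the unit circle and a further $\varepsilon$-splitting. That second layer of quadratic analysis is the essential step your sketch omits.
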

\begin{proof}
We follow the proof of Theorem \ref{Monothm2} up to \eqref{J13}.

\medskip
\noindent\textit{Part \ref{M1}.} Applying  Cauchy's inequality to the sum $|w|_1^2 z_1^2w_2^2 +|w|_2^2 z_2^2w_1^2$ in \eqref{J13} gives 
\begin{align*}
J_1+J_3
&\ge \frac1{|w|^2}\Big[ \pm 2  |w|_1z_1w_2|w|_2z_2w_1-(|w|_1^2+|w|_2^2)w_1w_2z_1z_2\Big]
= -\frac1{|w|^2}(|w|_1\mp |w|_2)^2w_1w_2z_1z_2.
\end{align*}
Combining with the formula \eqref{J2} of $J_2$, we obtain
\beq\label{hquad2}
 h'(t)
   \ge \frac{2}{|w|}\left\{ a w_1^2z_1^2 +\left[(b+c) - \frac{(|w|_1\mp |w|_2)^2 }{2|w|^2} \right]w_1w_2z_1z_2+d w_2^2 z_2^2\right\}.
\eeq

Fix $t\in[0,1]$. Let $\alpha(t)=w_1(t)z_1$ and $\beta(t)=w_2(t)z_2$.
The expression between the parentheses on the right-hand side of \eqref{hquad2} is $Q_\mp(\alpha(t),\beta(t))$, where $Q_{\mp}$ is the following $t$-dependent  quadratic function 
\beqs
Q_\mp (\alpha,\beta)=a \alpha^2 +\left[(b+c) - \frac{(|w(t)|_1\mp |w(t)|_2)^2 }{2|w(t)|^2} \right]\alpha\beta+d \beta^2 \text{ for $\alpha,\beta\in\R$.}
\eeqs

Suppose \eqref{ocond1} is true.
Then
\beqs
\left[b+c - \frac{(|w|_1+|w|_2)^2 }{2|w|^2}\right]^2\le 4ad\quad\forall w\in \R^2\setminus\{ 0\},
\eeqs
which yields $Q_+(\alpha,\beta)\ge 0$ for all $\alpha,\beta\in\R$.
Consequently, $h'(t)\ge 0$ for all $t\in[0,1]$, which gives $I\ge 0$ thanks to \eqref{Iint} and proves \eqref{IBu}.

In the case \eqref{ocond2}, the proof is similar by using $Q_-$ in place of $Q_+$.

The rest of the proof of part (i) is the same as that of Theorem \ref{Monothm2}(i).

\medskip
\noindent\textit{Part \ref{M2}.}  
We follow the proof of part (ii) of Theorem \ref{Monothm2} up to \eqref{wh1}.
In the remainder of the proof, we use the plus sign for $\pm$ in the case of the strict inequality in \eqref{ocond1}, and use the minus sign  for $\pm$ in the case  of the strict inequality in \eqref{ocond2}

Adding and subtracting $\pm 2\frac{|w|_1|w|_2}{|w|^2}w_1w_2z_1z_2$ to the last sum of \eqref{wh1}, we obtain
\beq\label{wh2}
\begin{aligned}
 |w|h'(t)
 &  = J_* \pm 2\frac{|w|_1|w|_2}{|w|^2}w_1w_2z_1z_2 +2\widetilde J_4,
\end{aligned}
\eeq 
where
\begin{align*}
\widetilde J_4
&=J_4-\left (\pm\frac{|w|_1|w|_2}{|w|^2}w_1w_2z_1z_2\right )\\
&=\left(a-\theta \frac{|w|_1^2}{2|w|^2}\right) w_1^2z_1^2 
+\left[(b+c)- \frac{(|w|_1^2+|w|_2^2\pm 2|w|_1|w|_2)}{2|w|^2}\right]w_1w_2z_1z_2  +\left(d-\theta\frac{|w|_2^2}{2|w|^2}\right) w_2^2 z_2^2.
\end{align*} 

Letting $\bar\alpha=w_1z_1$ and $\bar\beta =w_2z_2$, we rewrite $\widetilde J_4$ as
\beq \label{J4til}
\begin{aligned} 
\widetilde J_4
&=\left(a-\theta \frac{|w|_1^2}{2|w|^2}\right) \bar\alpha^2 
+\left[(b+c)- \frac{(|w|_1\pm |w|_2)^2}{2|w|^2}\right]\bar\alpha\bar\beta 
+\left(d-\theta\frac{|w|_2^2}{2|w|^2}\right) \bar\beta^2.
\end{aligned}
\eeq 
The right-hand side of \eqref{J4til} is a quadratic function of $\bar\alpha$ and $\bar\beta$.
Then there is $\theta\in(0,1)$ sufficiently small so that one has, for all $w\in \R^2\setminus\{0\}$,  
\eqref{atheta} holds true and, similar to \eqref{bctheta},
\beq\label{plustheta}
\left[b+c- \frac{(|w|_1\pm |w|_2)^2}{2|w|^2}\right]^2\le 4\left(a-\theta \frac{|w|_1^2}{2|w|^2}\right) \left(d-\theta\frac{|w|_2^2}{2|w|^2}\right) .
\eeq

Note that $\theta'=1-\theta$ is  a fixed number in $(0,1)$ now.
Then \eqref{atheta} and \eqref{plustheta} imply that  $\widetilde J_4\ge 0$ for all $t\in[0,1]$.
Combining this fact with formulas \eqref{wh2} and \eqref{Jztp} gives
\begin{align*}
 |w|h'(t)
 &\ge J_* \pm 2\frac{w_1w_2}{|w|^2}(|w|_1z_1)(|w|_2z_2)\\
 &= \left(1-\theta'\frac{w_1^2}{|w|^2}\right)|w|_1^2 z_1^2 
+\left(1- \theta'\frac{w_2^2}{|w|^2}\right)|w|_2^2 z_2^2
\pm 2\frac{w_1w_2}{|w|^2}(|w|_1z_1)(|w|_2z_2).
\end{align*}
This implies 
\beq\label{whQ}
 h'(t) \ge \frac1{|w|}\widetilde Q(|w|_1 z_1,|w|_2z_2),
 \eeq 
where $\widetilde Q$ is now a new quadratic function defined for each $t$ by
\beqs 
\widetilde Q(\alpha,\beta)=
\left(1-\theta'\frac{w_1^2(t)}{|w(t)|^2}\right)\alpha^2 
+\left(1- \theta'\frac{w_2^2(t)}{|w(t)|^2}\right)\beta^2
\pm 2\frac{w_1(t)w_2(t)}{|w(t)|^2}\alpha\beta \text{ for }\alpha,\beta\in\R. 
\eeqs 

We claim that 
\beq\label{ww0}
w_1^2 w_2^2<  (1-\theta'w_1^2)(1- \theta'w_2^2) \text{ for all $w\in\R^2$ with $|w|=1$.}
\eeq
Indeed, \eqref{ww0} is equivalent to
\begin{align*}
0< -w_1^2 w_2^2+1-\theta'(w_1^2+w_2^2) +  {\theta'}^2 w_1^2w_2^2
=1-\theta'-( 1 -{\theta'}^2) w_1^2w_2^2,
\end{align*}
which, in turn, is equivalent to 
\beq\label{ww} 
w_1^2w_2^2<\frac{1}{1+\theta'}.
\eeq 
Because 
\[
w_1^2w_2^2 \le \left(\frac{w_1^2+w_2^2}{2}\right)^2 =\frac 1 4,\text{ while }
\frac{1}{1+\theta'}\ge \frac12,
\]
one asserts that \eqref{ww} is true, and, hence, so is the claim \eqref{ww0}. 

By \eqref{ww0} and the compactness of the unit sphere $\{w\in\R^2:|w|=1$\}, we have
\beq\label{ww2}
1>\max_{|w|=1}\frac{w_1^2 w_2^2}{(1-\theta'w_1^2)(1- \theta'w_2^2)}=(1-\varep)^2,
\eeq
for some number $\varep\in(0,1)$.
We split $\widetilde Q$ as
\beq\label{QQ}
\widetilde Q(\alpha,\beta)=
\varepsilon \left(1-\theta'\frac{w_1^2(t)}{|w(t)|^2}\right)\alpha^2 
+\varepsilon \left(1- \theta'\frac{w_2^2(t)}{|w(t)|^2}\right)\beta^2
+\widehat Q(\alpha,\beta),
\eeq 
where, for each $t$, 
\beqs%\label{Qhat}
\widehat Q(\alpha,\beta)=(1-\varep)\left(1-\theta'\frac{w_1^2(t)}{|w(t)|^2}\right)\alpha^2
+(1-\varep)\left(1- \theta'\frac{w_2^2(t)}{|w(t)|^2}\right)\beta^2
\pm 2\frac{w_1(t)w_2(t)}{|w(t)|^2}\alpha\beta .
\eeqs
Because
$$(1-\varep)\left(1-\theta'\frac{w_1^2}{|w|^2}\right)\ge (1-\varep)(1-\theta')>0,$$
and, thanks to \eqref{ww2},
\beqs
\left(\frac{w_1}{|w|}\frac{ w_2}{|w|}\right)^2
\le (1-\varep)^2 \left (1-\theta'\frac{w_1^2}{|w|^2}\right) 
\left (1- \theta'\frac{w_2^2}{|w|^2}\right)\quad\forall w\in\R^2,w\ne 0,
\eeqs
we have $\widehat Q\ge 0$. Together with \eqref{whQ} and \eqref{QQ}, it implies
$h'(t) \ge   \varep J_*/|w|$, where $J_*$ is defined by \eqref{Jztp}.
Thanks to the lower bound of $J_*$ in \eqref{Jest2}, we derive
$h'(t)\ge \varep c_*|w(t)||z|^2$.
By using \eqref{Iint} and applying inequality \eqref{intineq} with $\alpha=1$, we 
obtain \eqref{IB3} for $C=\varep c_*/8$ and complete proof of part (ii).
\end{proof}

Note that condition \eqref{ocond1} is equivalent to
\beq\label{pos1}
2(b+c-2\sqrt{ad})\le (|w|_1+ |w|_2)^2\le 2(b+c+2\sqrt{ad}) \text{ for all $w\in\R^2$ with $|w|=1$,}
\eeq
while condition \eqref{ocond2} is equivalent to
\beq\label{pos2}
2(b+c-2\sqrt{ad})\le (|w|_1- |w|_2)^2\le 2(b+c+2\sqrt{ad})\text{ for all $w\in\R^2$ with $|w|=1$,}
\eeq

Based on Theorem \ref{Monothm1}, more specific conditions for the monotonicities can then be derived in the following.
 
\begin{corollary}\label{Cor1}
If
\beq\label{good1}
\max\{a+c,b+d\} +2\sqrt{\max\{a,b\}}\sqrt{\max\{c,d\}}\le  2(b+c+2\sqrt{ad})
\eeq
and
\beq\label{good2}
2(b+c-2\sqrt{ad}) \le \min\{a+c,b+d\} +2\sqrt{\min\{a,b\}}\sqrt{\min\{c,d\}},
\eeq
then $G_B$ is monotone.

If both inequalities \eqref{good1} and \eqref{good2} are strict, then $G_B$ is $3$-monotone.
\end{corollary}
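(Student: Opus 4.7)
The plan is to reduce Corollary \ref{Cor1} to Theorem \ref{Monothm1}, specifically to condition \eqref{ocond1} (equivalently \eqref{pos1}), by bounding $(|w|_1+|w|_2)^2$ from above and below for unit vectors $w$. The key observation is that with $w=(w_1,w_2)$ satisfying $|w|=1$, the quantities $|w|_1^2=aw_1^2+bw_2^2$ and $|w|_2^2=cw_1^2+dw_2^2$ are convex combinations of $(a,b)$ and $(c,d)$ respectively, since $w_1^2+w_2^2=1$.

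First I would estimate the sum
\begin{equation*}
|w|_1^2+|w|_2^2=(a+c)w_1^2+(b+d)w_2^2,
\end{equation*}
obtaining the two-sided bound
\begin{equation*}
\min\{a+c,b+d\}\le |w|_1^2+|w|_2^2\le \max\{a+c,b+d\}
\end{equation*}
for every unit $w$. Next I would estimate the cross term $|w|_1|w|_2$ by again using that $aw_1^2+bw_2^2$ is sandwiched between $\min\{a,b\}$ and $\max\{a,b\}$, and similarly for $cw_1^2+dw_2^2$, to get
\begin{equation*}
\sqrt{\min\{a,b\}\min\{c,d\}}\le |w|_1|w|_2\le \sqrt{\max\{a,b\}\max\{c,d\}}.
\end{equation*}
Combining these two with $(|w|_1+|w|_2)^2=|w|_1^2+|w|_2^2+2|w|_1|w|_2$ yields
\begin{equation*}
\min\{a+c,b+d\}+2\sqrt{\min\{a,b\}\min\{c,d\}}\le (|w|_1+|w|_2)^2\le \max\{a+c,b+d\}+2\sqrt{\max\{a,b\}\max\{c,d\}}.
\end{equation*}

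With these bounds in hand, I would invoke the hypotheses directly: the upper bound together with \eqref{good1} gives $(|w|_1+|w|_2)^2\le 2(b+c+2\sqrt{ad})$ for all unit $w$, and the lower bound together with \eqref{good2} gives $2(b+c-2\sqrt{ad})\le (|w|_1+|w|_2)^2$. Thus condition \eqref{pos1} (equivalently \eqref{ocond1}) is satisfied, and Theorem \ref{Monothm1}\tnum{} applied with the $+$ sign yields that $G_B$ is monotone. For the second assertion, if both \eqref{good1} and \eqref{good2} are strict, then the same chain of inequalities shows that the inequality in \eqref{ocond1} is strict for every unit $w$; by the compactness of the unit circle in $\R^2$, this strictness is uniform, so Theorem \ref{Monothm1}\tnum{} with the $+$ sign delivers the $3$-monotonicity.

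I do not anticipate any real obstacle: the proof is an elementary two-step sandwiching of $(|w|_1+|w|_2)^2$. The only point that requires a word of care is the uniformity in $w$ needed for the strict case, which is handled by compactness of $\{|w|=1\}$ and continuity of $w\mapsto (|w|_1+|w|_2)^2$.
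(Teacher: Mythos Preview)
Your proposal is correct and follows essentially the same route as the paper: bound $(|w|_1+|w|_2)^2$ from above and below using $|w|_1^2+|w|_2^2=(a+c)w_1^2+(b+d)w_2^2$ and the obvious sandwiching of $|w|_1|w|_2$, then feed these into condition \eqref{pos1}/\eqref{ocond1} of Theorem \ref{Monothm1}. The only minor remark is that your appeal to compactness in the strict case is unnecessary, since the upper and lower bounds you derive for $(|w|_1+|w|_2)^2$ are already $w$-independent constants, so the strict margin is automatically uniform.
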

\begin{proof}
We use condition \eqref{pos1} for Theorem \ref{Monothm1}.
Note, for any $w\in\R^2$,  that 
\beqs
(|w|_1+|w|_2)^2=|w|_1^2+|w|_2^2+2|w|_1|w|_2\le \max\{a+c,b+d\} |w|^2+2\sqrt{\max\{a,b\}}\sqrt{\max\{c,d\}}|w|^2,
\eeqs
and
\beqs
(|w|_1+|w|_2)^2\ge \min\{a+c,b+d\}|w|^2+2\sqrt{\min\{a,b\}}\sqrt{\min\{c,d\}}|w|^2.
\eeqs
Hence, \eqref{good1} is a sufficient condition for the second inequality in \eqref{pos1}, and \eqref{good2} is a sufficient condition for the first inequality in \eqref{pos1}.
Thus, the first statement follows part (i) of Theorem \ref{Monothm1}.

The arguments for the case \eqref{good1} and \eqref{good2} being strict inequalities  are similar by using part (ii) of Theorem \ref{Monothm1}.
\end{proof}

We are ready to check our mathematical criteria with the real data in \cite{BarakBear81}.

\begin{example}\label{eg2}
With the experimental values in \eqref{abcd} we have $a<c<b<d$. Then the strict inequalities in \eqref{good1} and \eqref{good2} become
\beq\label{match1}
b +d+2\sqrt{bd}< 2(b+c+2\sqrt{ad})
\eeq
and
\beq\label{match2}
2(b+c-2\sqrt{ad}) < ( a+c +2\sqrt{ac}).
\eeq
Simple calculations show that the values in \eqref{abcd} satisfy both \eqref{match1} and \eqref{match2}. Therefore, $G_B$ is $3$-monotone. Moreover, thanks to the same continuity argument in part (b) of Example \ref{eg1},  $G_B$ is still $3$-monotone even when the measurements have small errors. This shows that our mathematical criteria work for real data.
\end{example}

Using condition \eqref{pos2} instead of \eqref{pos1}, we obtain a counterpart of Corollary \ref{Cor1}.

\begin{corollary}\label{Cor2}
The following statements hold true.

\begin{enumerate}[label=\tnum]
\item\label{xsim1} If 
\beq \label{sn1}
\frac{\max\{a,d\}}2 -2\sqrt{ad} \le b+c\le 2\sqrt{ad},
\eeq 
then $G_B$ is monotone.

\noindent If 
\beq \label{sn2}
\frac{\max\{a,d\}}2 -2\sqrt{ad} < b+c < 2\sqrt{ad},
\eeq 
 then  $G_B$ is $3$-monotone.

\item Let $M=\max\{a,d\}$ and $m=\min\{a,d\}$.

\begin{enumerate}[label=\rnum]
\item If $M/m\le 16$, then \eqref{sn1}, respectively, \eqref{sn2},  is equivalent to
\beqs
b+c\le 2\sqrt{ad}, \text{ respectively, }
b+c< 2\sqrt{ad}.
\eeqs

\item If $16<M/m< 64$, then \eqref{sn1} and 
\eqref{sn2} stay the same.

\item If $M/m=64$,  then \eqref{sn1} is equivalent to
\beqs
b+c= 2\sqrt{ad},
\eeqs
while 
\eqref{sn2}  is impossible.

\item If $M/m>64$, then \eqref{sn1}  is impossible.
\end{enumerate}
\end{enumerate}
\end{corollary}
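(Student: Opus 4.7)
The plan is to deduce Corollary \ref{Cor2} from Theorem \ref{Monothm1} by verifying that condition \eqref{sn1} implies \eqref{pos2} uniformly in $|w|=1$, and that \eqref{sn2} implies its strict version. Part (ii) is then a purely algebraic discussion of when the interval defining \eqref{sn1} is empty, collapses to a point, or reduces to a one-sided constraint.

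For part (i), the upper bound $b+c\le 2\sqrt{ad}$ in \eqref{sn1} immediately forces $2(b+c-2\sqrt{ad})\le 0\le (|w|_1-|w|_2)^2$, so the left inequality of \eqref{pos2} is automatic. For the right inequality, I would apply the Cauchy--Schwarz inequality to $(\sqrt{a}\,|w_1|,\sqrt{b}\,|w_2|)$ and $(\sqrt{c}\,|w_1|,\sqrt{d}\,|w_2|)$ to get $|w|_1|w|_2\ge \sqrt{ac}\,w_1^2+\sqrt{bd}\,w_2^2$, and hence
\begin{equation*}
(|w|_1-|w|_2)^2\le (\sqrt{a}-\sqrt{c})^2 w_1^2+(\sqrt{b}-\sqrt{d})^2 w_2^2\le \max\{(\sqrt{a}-\sqrt{c})^2,(\sqrt{b}-\sqrt{d})^2\}
\end{equation*}
for $|w|=1$. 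Next, using only $b,c\le 2\sqrt{ad}$ (implied by the upper bound of \eqref{sn1}), I would verify by a short case split on the signs of $\sqrt{a}-\sqrt{c}$ and $\sqrt{b}-\sqrt{d}$ that $(\sqrt{a}-\sqrt{c})^2\le \max\{a,d\}$ and $(\sqrt{b}-\sqrt{d})^2\le \max\{a,d\}$; in the only nontrivial case one reduces to $\sqrt{c}\le \sqrt{a}+\sqrt{\max\{a,d\}}$, which follows from $(\sqrt{a}+\sqrt{\max\{a,d\}})^2\ge (\sqrt{a}+\sqrt{d})^2\ge 4\sqrt{ad}\ge 2\sqrt{ad}\ge c$, and symmetrically for $b$. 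Combined with the rearrangement $\max\{a,d\}\le 2(b+c+2\sqrt{ad})$ of the lower bound in \eqref{sn1}, this closes the chain and yields \eqref{pos2}; Theorem \ref{Monothm1}(i) then gives monotonicity. Under \eqref{sn2} all the relevant inequalities become strict for every $|w|=1$, so the strict form of \eqref{pos2} holds and Theorem \ref{Monothm1}(ii) delivers the $3$-monotonicity.

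For part (ii), condition \eqref{sn1} is equivalent to $b+c\in [M/2-2\sqrt{Mm},\,2\sqrt{Mm}]$. Nonemptiness requires $M/2-2\sqrt{Mm}\le 2\sqrt{Mm}$, i.e., $\sqrt{M}\le 8\sqrt{m}$, which separates cases (c) and (d); equality here forces $b+c=2\sqrt{ad}$ and makes \eqref{sn2} impossible. The lower endpoint is nonpositive exactly when $\sqrt{M}\le 4\sqrt{m}$, i.e., $M/m\le 16$, in which case $b+c>0$ renders the lower bound redundant and \eqref{sn1} simplifies to $b+c\le 2\sqrt{ad}$, which is case (a); the remaining regime $16<M/m<64$ leaves both endpoints positive and is case (b). The analogues for \eqref{sn2} come from reading the strict versions of the same inequalities. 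The main obstacle throughout is the pair of inequalities $(\sqrt{a}-\sqrt{c})^2,(\sqrt{b}-\sqrt{d})^2\le \max\{a,d\}$ under the rather loose constraint $b,c\le 2\sqrt{ad}$: the naive bound $(\sqrt{a}-\sqrt{c})^2\le \max\{a,c\}$ is insufficient because $c$ can exceed $\max\{a,d\}$ when $M/m<4$, so one must invoke the AM--GM--type chain above. I expect this to be the only step requiring genuine care.
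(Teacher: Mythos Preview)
Your proposal is correct, and part (ii) matches the paper's algebraic analysis essentially verbatim. For part (i), however, you take a genuinely different route to the key bound on $(|w|_1-|w|_2)^2$.

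The paper avoids Cauchy--Schwarz entirely. It uses the one-line observation
\[
(|w|_1-|w|_2)^2\le \max\{|w|_1^2,|w|_2^2\}=\max\{a,b,c,d\}\quad\text{for }|w|=1,
\]
so the right-hand inequality in \eqref{pos2} reduces to $\max\{a,b,c,d\}\le 2(b+c+2\sqrt{ad})$. The paper then observes that the cases $\max\{a,b,c,d\}\in\{b,c\}$ are automatic (e.g.\ $b\le 2b+2c+4\sqrt{ad}$), so only $\max\{a,d\}\le 2(b+c+2\sqrt{ad})$ remains, which is exactly the rearranged lower bound in \eqref{sn1}. This is shorter and entirely elementary.

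Your approach instead sharpens the intermediate bound to $(|w|_1-|w|_2)^2\le\max\{(\sqrt a-\sqrt c)^2,(\sqrt b-\sqrt d)^2\}$ via Cauchy--Schwarz and then spends effort showing this is $\le\max\{a,d\}$ under $b,c\le 2\sqrt{ad}$. The argument is sound (your AM--GM chain works), but the extra sharpness is never used: both routes terminate at the same inequality $\max\{a,d\}\le 2(b+c+2\sqrt{ad})$. In effect you are working harder to reach a finer intermediate estimate that the paper bypasses with a coarser bound plus a trivial case elimination. What your approach buys is a tighter control of $(|w|_1-|w|_2)^2$ that could in principle yield a broader sufficient condition; what the paper's buys is brevity and the avoidance of the ``only step requiring genuine care'' that you flag.
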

\begin{proof}
Observe, for all $w\in\R^2$, that 
\beq\label{wmw}
(|w|_1- |w|_2)^2\le \max\{|w|_1^2,|w|_2^2\}=\max\{a,b,c,d\}|w|^2.
\eeq 

\medskip
Part (i). Assume \eqref{sn1}. By the virtue of part (ii) of Theorem \ref{Monothm1},
we can use  \eqref{pos2} as a sufficient condition for $G_B$ to be monotone. 
Thanks to \eqref{wmw}, one sufficient condition for  \eqref{pos2} is
\beq \label{pos3}
b+c\le 2\sqrt{ad},  \quad \max\{a,b,c,d\} \le 2(b+c+2\sqrt{ad}).
\eeq 
By dropping $b$ and $c$ on the left-hand side of the last inequality, we obtain the sufficient condition \eqref{sn1}.

Now, assume \eqref{sn2}. Then the second inequality in \eqref{sn1} implies the first inequality in \eqref{pos2} is strict. Also, the first inequality in \eqref{sn1} implies the second inequality in \eqref{pos3} is strict. This and \eqref{wmw} imply the second inequality in \eqref{pos2} is also strict.
Thus, the inequality in \eqref{ocond2} is strict. Therefore, according to part (ii) of Theorem \ref{Monothm1},  we have $G_B$ is $3$-monotone.

\medskip
Part (ii). We rewrite the first group in \eqref{sn1} and \eqref{sn2} as
\beqs 
S_1\eqdef \frac{\max\{a,d\}}2 -2\sqrt{ad}=\frac M 2 -2\sqrt{Mm}=\frac12\sqrt{Mm}(\sqrt{M/m}-4).
\eeqs

If $M/m\le 16$, then $S_1\le 0$. Therefore, the statements in part (a) are correct.

Comparing the last group with the first group in \eqref{sn1} and \eqref{sn2}, we consider
\beq
S_2\eqdef 2\sqrt{ad}-\left(\frac{\max\{a,d\}}2 -2\sqrt{ad}\right)=4\sqrt{Mm}-\frac M 2 
=\frac12 \sqrt{Mm} (8-\sqrt{M/m}).
\eeq

If $16<M/m<64$, then $S_1>0$ and $S_2>0$, which yield no changes to \eqref{sn1} and \eqref{sn2}.

When $M/m=64$, one has $S_1>0$ and $S_2=0$. 
Therefore, the statements in part (c) are correct.

If $M/m>64$, then $S_2<0$, and, hence, inequality \eqref{sn1}  is impossible.
\end{proof}

\begin{example}\label{eg3}
If $d=36a$ and $6a <b+c<12 a$, then, by using \eqref{sn2}, we have $G_B$ is $3$-monotone.
\end{example}

\subsection{The case of general dimensions}\label{ssnd}

 We return to the general spatial dimension $n\ge 2$ with the trilinear mapping $B$ defined by \eqref{Bnd}.
 
\begin{theorem}\label{Monothm3}
The following statements hold true.

\begin{enumerate}[label=\tnum]
\item If there exists a number $\lambda>0$ such that 
\beq\label{Ac}
\|A_i-\lambda I_n\|_{\rm op}\le \frac{\lambda}4 \quad\text{for all }i=1,2,\ldots,n,
\eeq
then $G_B$ is monotone.

\item If the inequality in \eqref{Ac} is strict for all $i=1,\ldots,n$, then  $G_B$ is $3$-monotone.
\end{enumerate}
\end{theorem}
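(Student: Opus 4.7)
The plan is to split each matrix into its isotropic part and a small perturbation: write $A_i = \lambda I_n + E_i$ where $E_i := A_i - \lambda I_n$ satisfies $\|E_i\|_{\rm op} \le \lambda/4$. Correspondingly set $B_0(u,v,w) = (u\cdot v)w$ and $\widetilde B(u,v,w) = \mathrm{diag}[u^T E_1 v,\ldots,u^T E_n v]\,w$, so that $B = \lambda B_0 + \widetilde B$. The basic bound $|u^T E_i v| \le \|E_i\|_{\rm op}|u||v|$ gives, componentwise, the clean tensorial estimate
\begin{equation*}
|\widetilde B(u,v,w)| \le (\lambda/4)\,|u|\,|v|\,|w| \text{ for all } u,v,w\in\R^n.
\end{equation*}
This inequality will do all the heavy lifting.

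Next I would imitate the derivative-integral scheme already used in Theorems~\ref{Monothm2} and \ref{Monothm1}. By the perturbation trick at the end of the proof of Lemma~\ref{Bprop} it suffices to handle pairs $u,v$ whose connecting segment avoids the origin. Setting $w(t) = tu + (1-t)v$, $z = u-v$, and $h(t) = B(w,w,w)\cdot z/|w|$, one has $(G_B(u) - G_B(v))\cdot (u-v) = \int_0^1 h'(t)\,dt$, where
\begin{equation*}
h'(t) = -\frac{w\cdot z}{|w|^3}B(w,w,w)\cdot z + \frac{1}{|w|}\bigl[B(z,w,w) + B(w,z,w) + B(w,w,z)\bigr]\cdot z.
\end{equation*}
Plugging in $B = \lambda B_0 + \widetilde B$ and using $B_0(w,w,w) = |w|^2 w$ together with the identities $B_0(z,w,w)\cdot z = B_0(w,z,w)\cdot z = (w\cdot z)^2$ and $B_0(w,w,z)\cdot z = |w|^2|z|^2$, the isotropic piece contributes exactly $\lambda[(w\cdot z)^2/|w| + |w||z|^2]$, which is at least $\lambda|w||z|^2$.

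Each of the four perturbation terms is controlled directly by the tensor estimate: both $|\frac{w\cdot z}{|w|^3}\widetilde B(w,w,w)\cdot z|$ and the three contributions $\frac{1}{|w|}|\widetilde B(\cdot,\cdot,\cdot)\cdot z|$ are bounded by $(\lambda/4)|w||z|^2$, so together they are at worst $-\lambda|w||z|^2$. Adding the two pieces yields $h'(t) \ge \lambda(w\cdot z)^2/|w| \ge 0$, proving monotonicity after integration. The main conceptual point — and essentially the whole difficulty — is that the constant $\lambda/4$ in hypothesis \eqref{Ac} is calibrated precisely so that the four perturbation terms just balance the isotropic surplus $\lambda|w||z|^2$; this is what makes the threshold sharp.

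For part~(ii), since there are only finitely many matrices, the strict inequalities in \eqref{Ac} yield $\max_i \|E_i\|_{\rm op} \le \mu\lambda/4$ for some $\mu \in [0,1)$. Repeating the argument above with $\lambda/4$ replaced by $\mu\lambda/4$ produces $h'(t) \ge (1-\mu)\lambda|w||z|^2$. Inequality \eqref{intineq} at $\alpha=1$ then gives $\int_0^1|w(t)|\,dt \ge |u-v|/8$, whence $(G_B(u)-G_B(v))\cdot(u-v) \ge \tfrac{(1-\mu)\lambda}{8}|u-v|^3$, so $G_B$ is $3$-monotone.
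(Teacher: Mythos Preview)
Your proof is correct and follows essentially the same approach as the paper: split $A_i=\lambda I_n+M_i$, use the path-derivative formula for $h'(t)$, compute the isotropic part exactly as $\lambda(w\cdot z)^2/|w|+\lambda|w||z|^2$, and bound the perturbation by $4\varepsilon_0|w||z|^2$ with $\varepsilon_0=\max_i\|M_i\|_{\rm op}\le\lambda/4$. The only cosmetic difference is that you package the perturbation estimate via the clean trilinear bound $|\widetilde B(u,v,w)|\le(\lambda/4)|u||v||w|$, whereas the paper writes out the coordinate sums explicitly before applying Cauchy--Schwarz; both arrive at the same $h'(t)\ge(\lambda-4\varepsilon_0)|w||z|^2$.
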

\begin{proof}
Part (i). Assume \eqref{Ac}. Denote $A_*=\lambda I_n$ and, for $i=1,2,\ldots,n$,  $M_i=A_i-\lambda I_n$.

Let $u,v\in\R^n$. We can assume, without  loss of generality, that the line segment connecting $u$ and $v$ does not contain the origin. 
Let $z$, $w(t)$ and $h(t)$ be as in \eqref{Bwz}, and $I$ be as in \eqref{Iint}.
We calculate
\begin{align*}
 h'(t)&=-\frac{w\cdot z}{|w|^3}B(w,w,w)\cdot z+\frac{1}{|w|}(B(z,w,w)+B(w,z,w)+B(w,w,z))\cdot z\\
 &=-\frac{w\cdot z}{|w|^3}\sum_{i=1}^n (w^T A_i w) w_iz_i
 +\frac{1}{|w|}\sum_{i=1}^n (z^T A_i w+w^T A_i z) w_iz_i
 +\frac{1}{|w|}\sum_{i=1}^n (w^T A_i w) z_i^2.
\end{align*}

For each $i$, we write $A_i=A_*+M_i$ and split the sums in $h'(t)$ accordingly to yield   $h'(t)=H_1+H_2$, where
\begin{align*}
H_1 &=-\frac{w\cdot z}{|w|^3}\sum_{i=1}^n (w^T A_* w) w_iz_i
+\frac{1}{|w|}\sum_{i=1}^n (z^T A_* w+w^T A_* z) w_iz_i
+\frac{1}{|w|}\sum_{i=1}^n (w^T A_* w) z_i^2\\
 H_2 &= -\frac{w\cdot z}{|w|^3}\sum_{i=1}^n (w^T M_i w) w_iz_i
 +\frac{1}{|w|}\sum_{i=1}^n (z^T M_i w+w^T M_i z) w_iz_i
 +\frac{1}{|w|}\sum_{i=1}^n (w^T M_i w) z_i^2.
\end{align*}

Regarding  $H_1$, one has 
\beqs 
    H_1=-\frac{\lambda|w|^2(w\cdot z)^2}{|w|^3}+\frac{2\lambda(w\cdot z)^2}{|w|}+\frac{1}{|w|}\lambda|w|^2 |z|^2
    =\frac{\lambda(w\cdot z)^2}{|w|}+\lambda|w| |z|^2\ge \lambda|w||z|^2.
\eeqs 
Set $\varep_0=\max\{ \|M_i\|_{\rm op}:1\le i\le n\}$.
Applying the Cauchy--Schwarz inequality and the first inequality in \eqref{op1} multiple times, we estimate $H_2$ by 
\begin{align*}
    |H_2|
    &\le \frac{|w|\cdot |z|}{|w|^3}\sum_{i=1}^n \varep_0 |w|^2 |w_i| |z_i|
    +\frac{1}{|w|}\sum_{i=1}^n 2\varep_0 |w||z| |w_i||z_i|
    +\frac{1}{|w|}\sum_{i=1}^n \varep_0 |w|^2 z_i^2\\
    &= \varep_0\left ( 3|z|\sum_{i=1}^n  |w_i| |z_i| + |w||z|^2\right )
    \le \varep_0\left ( 3|z|\cdot |w||z|+ |w||z|^2\right )
    = 4\varep_0 |w||z|^2.
\end{align*}
Combining the above estimates of $H_1$ and $H_2$ gives
\begin{align*}
 h'(t)
  &\ge (\lambda-4\varep_0) |w||z|^2.
 \end{align*}
Thanks to condition \eqref{Ac}, we have $\varep_0\le \lambda/4$, then $h'(t)\ge 0$ for all $t\in[0,1]$. By this fact and \eqref{Iint}, we obtain \eqref{IBu}. Thus, $G_B$ is monotone.

Part (ii). With strict inequalities in \eqref{Ac}, we have $\lambda-4\varep_0=c>0$. Thus,
$ h'(t) \ge c |w(t)||z|^2$ for all $t\in[0,1]$.
Integrating this inequality from $0$ to $1$ and  then applying inequality \eqref{intineq}, we obtain \eqref{IB3} with $C=c/8$.
Therefore, $G_B$ is $3$-monotone.
\end{proof}

\begin{remark}
Compared with Theorem \ref{Monothm1} in the two-dimension case, Theorem \ref{Monothm3} does not require each matrix $A_i$ to be diagonal. However, it must be a perturbation of the same multiple of the identity matrix for all $i$.
\end{remark}

The next theorem contains very specific sufficient conditions for $G_B$'s monotonicities via justifying \eqref{Ac}. Below, ${\rm Tr}(\cdot)$ denotes the trace of square matrices.
 
\begin{theorem}\label{Monothm4}
If
\beq\label{trA}
\frac{\sum_{i=1}^n {\rm Tr}(A_i)}{\sqrt{\sum_{i=1}^n |A_i|^2}}\ge \sqrt{n - \frac1{16}},
\eeq
then $G_B$ is monotone.

If inequality \eqref{trA} is strict, then $G_B$ is $3$-monotone.
\end{theorem}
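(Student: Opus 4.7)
The plan is to apply Theorem \ref{Monothm3} by producing a suitable $\lambda > 0$. Once $\lambda$ is constructed with $\|A_i - \lambda I_n\|_{\rm op} \le \lambda/4$ for every $i = 1,\ldots,n$, Theorem \ref{Monothm3}(i) yields the monotonicity of $G_B$; the strict version of \eqref{trA} will analogously give a strict version of \eqref{Ac}, so that Theorem \ref{Monothm3}(ii) delivers the $3$-monotonicity.

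To construct such a $\lambda$, I would invoke the Frobenius bound $\|\cdot\|_{\rm op} \le |\cdot|$ of \eqref{nnorms}, reducing the goal to the sufficient condition $|A_i - \lambda I_n|^2 \le \lambda^2/16$ for each $i$. A direct expansion using $|I_n|^2 = n$ rewrites this as a scalar quadratic inequality in $\lambda$,
\[
(n - 1/16)\lambda^2 - 2\,{\rm Tr}(A_i)\lambda + |A_i|^2 \le 0,
\]
one such inequality per index $i$. The trace hypothesis \eqref{trA} is tailored so that an appropriately aggregated version of these per-$i$ quadratics---with coefficients involving the sums $S = \sum_i {\rm Tr}(A_i)$ and $T = \sum_i |A_i|^2$---has nonnegative discriminant in $\lambda$, from which a common $\lambda^* > 0$ can be extracted via the quadratic formula. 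The strict version of \eqref{trA} gives a strict discriminant inequality and hence a strict \eqref{Ac}.

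The main obstacle I anticipate lies in the precise aggregation: a naive summation of the $n$ per-$i$ quadratics produces a single quadratic with discriminant condition $S^2 \ge n(n - 1/16) T$, which is strictly stronger than the stated \eqref{trA} by a factor $\sqrt{n}$. Closing this gap will require a more careful argument---either a clever construction of $\lambda^*$ from the aggregate data $(S,T)$ together with a Cauchy--Schwarz argument relating the individual pairs $({\rm Tr}(A_i), |A_i|)$ back to $(S,T)$, or a sharpened bound on $|H_2|$ inside the proof of Theorem \ref{Monothm3} via an $\ell^2$-in-$i$ Cauchy--Schwarz inequality (pairing the tensor $(M_i)_{i=1}^n$ against the components of $w$ and $z$) that directly accommodates the aggregate form of \eqref{trA} without reducing to $n$ pointwise-in-$i$ inequalities.
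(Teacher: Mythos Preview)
Your overall strategy---reduce to Theorem~\ref{Monothm3} via the Frobenius bound $\|\cdot\|_{\rm op}\le|\cdot|$ and then solve a quadratic in $\lambda$---is exactly the paper's. What you are missing is the one-line move that dissolves your aggregation worry. Instead of trying to find a common $\lambda$ satisfying the $n$ separate inequalities $|A_i-\lambda I_n|^2\le\lambda^2/16$, the paper imposes the single \emph{stronger} condition
\[
\sum_{i=1}^n |A_i-\lambda I_n|^2 \le \frac{\lambda^2}{16}.
\]
Because each summand is nonnegative, this one inequality forces $|A_i-\lambda I_n|\le\lambda/4$ for every $i$, hence $\|A_i-\lambda I_n\|_{\rm op}\le\lambda/4$, and Theorem~\ref{Monothm3} applies. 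Expanding the left-hand side produces a \emph{single} quadratic in $\lambda$ whose coefficients are already the aggregates $S=\sum_i{\rm Tr}(A_i)$ and $T=\sum_i|A_i|^2$; the discriminant condition for a positive root is precisely \eqref{trA}, and the strict version gives the strict~\eqref{Ac}. No per-$i$ Cauchy--Schwarz and no reworking of the $H_2$ estimate is needed.

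Two further remarks. First, your ``naive summation'' of the $n$ per-$i$ quadratics is logically backwards: summing the inequalities gives only a \emph{necessary} condition for a common $\lambda$, not a sufficient one, so even if the discriminant matched you could not conclude. Second, your alternative fix---sharpening the $H_2$ bound in the proof of Theorem~\ref{Monothm3} via an $\ell^2$-in-$i$ Cauchy--Schwarz---would indeed recover the same summed condition $\sum_i|M_i|^2\le\lambda^2/16$, but this is more work than the paper's direct observation that the summed Frobenius bound already implies each pointwise bound.
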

\begin{proof}
Assume \eqref{trA}. 
Thanks to Theorem \ref{Monothm3}(i) and the relation \eqref{nnorms}, a sufficient condition for  $G_B$ being monotone is that there is $\lambda>0$ such that 
\beqs%\label{lamsqr}
\sum_{i=1}^n |A_i-\lambda I_n|^2\le \lambda^2/16.
\eeqs 
This can be rewritten  as a quadratic inequality in $\lambda$ as 
\beq\label{lameq}
\left(n - \frac1{16}\right)\lambda^2-2\lambda \sum_{i=1}^n {\rm Tr}(A_i)  +\sum_{i=1}^n |A_i|^2\le 0.
\eeq 
By \eqref{trA}, we have 
\beq
\sum_{i=1}^n {\rm Tr}(A_i) >0\text{ and } \left(\sum_{i=1}^n {\rm Tr}(A_i)\right)^2- (n - 1/16)\sum_{i=1}^n |A_i|^2\ge 0.
\eeq
Then the quadratic inequality \eqref{lameq} has a positive solution $\lambda$, which implies  $G_B$ is monotone.

\medskip
If inequality \eqref{trA} is strict, then there is a positive solution $\lambda$ of the corresponding strict inequality in \eqref{lameq}. 
By \eqref{nnorms} and Theorem \ref{Monothm3}(ii), $G_B$ is $3$-monotone.
\end{proof}

\section{Fundamental properties (II)}\label{mod2sec}

Let $A$ and $K$ be $n\times n$ matrices, and $\alpha$ be a positive number.
The functions of our interest are of the general form
\beq \label{AKdef}
F_{A,K,\alpha}(u)=|Ku|^\alpha Au \text{ for $u\in \R^n$.}
\eeq 

In particular, when $K=I_n$ in \eqref{AKdef}, we have the function 
\beq \label{FAdef}
F_{A,\alpha}(u)=|u|^\alpha Au\text{ for  $u\in\R^n$.} 
\eeq 

We note that when $A={\rm diag}[a_1,a_2,\ldots,a_n]$ and $\alpha=1$,  
$$F_{A,1}(u)=|u|^{-1}B(u,u,u)=G_B(u) \text{ for $u\ne 0$, }$$
with $B$ defined as in \eqref{Bnd} for
$A_i=a_i I_n$. Thus,  properties of $F_{A,1}$ in this case can be derived from those in Section \ref{mod1sec}.
Nonetheless, we will focus on the general matrix $A$ and power $\alpha$ in this section.

When $A$ is symmetric, positive definite, and $K=A^{1/2}$ in \eqref{AKdef}, we have the functions
\beq\label{tildeFA}
\widetilde F_{A,\alpha}(u)=|A^{1/2}u|^\alpha Au\text{ for $u\in\R^n$.}
\eeq

For our convenience, we also define
\beq
F_{A,0}(u)=\widetilde F_{A,0}(u)=Au\text{ for $u\in\R^n$.}
\eeq

The following are natural necessary conditions for the monotonicities for $F_{A,K,\alpha}$.

\begin{proposition}\label{MonoNec}
  Let $A,K$ be $n\times n$ matrices and $\alpha>0$. 
  \begin{enumerate}[label=\tnum]
     \item   If $K$ is invertible and $F_{A,K,\alpha}$ is monotone, then 
   \beq\label{FAnec1}
   u^TA u\ge 0\text{ for all $u\in\R^n$.}\eeq 

\item    If $F_{A,K,\alpha}$ is $\beta$-monotone for some $\beta>0$, then $A$ and $K$ are invertible, $\beta=\alpha+2$ and there is $C>0$ such that 
     \beq\label{FAnec2}
   u^TA u\ge C|u|^2\text{ for all $u\in\R^n$.}\eeq 
 \end{enumerate}
\end{proposition}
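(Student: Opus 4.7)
The whole argument runs through the single test pair $v=0$ in the defining inequality, together with a scaling trick in $t>0$.

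For part (i), take $v=0$ in the monotonicity inequality. Since $F_{A,K,\alpha}(0)=0$, this yields
\beqs
F_{A,K,\alpha}(u)\cdot u = |Ku|^\alpha\, u^T A u \ge 0 \text{ for all } u\in\R^n.
\eeqs
Because $K$ is invertible, $|Ku|^\alpha>0$ for every $u\ne 0$, so $u^T Au\ge 0$. The case $u=0$ is trivial. This gives \eqref{FAnec1}.

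For part (ii), I first rule out the non-invertibility of $K$ and of $A$. Suppose $Ku_0=0$ for some $u_0\ne 0$; then $F_{A,K,\alpha}(u_0)=0$, so, with $v=0$, the $\beta$-monotonicity gives $0\ge C|u_0|^\beta>0$, a contradiction. Hence $K$ is invertible. Similarly, if $Au_0=0$ for some $u_0\ne 0$, then $u_0^T A u_0=0$, so $F_{A,K,\alpha}(u_0)\cdot u_0=0$, again contradicting the $\beta$-monotonicity tested against $v=0$. Hence $A$ is invertible as well.

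Next I pin down the exponent $\beta$ by homogeneity. Fix any $u\ne 0$ with $u^T Au\ne 0$ (such $u$ exists since $A\ne 0$; in fact the last step of the proof will show $u^T Au>0$ for every $u\ne 0$). For $t>0$, applying the $\beta$-monotonicity to $tu$ and $0$ gives
\beq\label{tuscale}
t^{\alpha+2}\,|Ku|^\alpha\, u^T A u \ge C\, t^\beta\, |u|^\beta.
\eeq
Since the left-hand side is a positive multiple of $t^{\alpha+2}$ and the right-hand side is a positive multiple of $t^\beta$, letting $t\to 0^+$ forces $\alpha+2\le \beta$ and letting $t\to\infty$ forces $\alpha+2\ge \beta$. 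Hence $\beta=\alpha+2$.

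Finally, with $\beta=\alpha+2$, inequality \eqref{tuscale} (or just the original one at $t=1$) reads $|Ku|^\alpha u^T Au\ge C|u|^{\alpha+2}$ for all $u\in\R^n$. Combining with the elementary bound $|Ku|\le \|K\|_{\rm op}|u|$ from \eqref{op1} and dividing, we obtain
\beqs
u^T A u\ge \frac{C}{\|K\|_{\rm op}^\alpha}\,|u|^2\text{ for all } u\in\R^n\setminus\{0\},
\eeqs
which is \eqref{FAnec2} (the case $u=0$ is trivial). I expect no real obstacle here; the only subtlety is being careful that dividing by $|Ku|^\alpha$ and by $u^T Au$ is legal, which is why I recorded the invertibility of $K$ and $A$ first.
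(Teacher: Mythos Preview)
Your proof is correct and follows essentially the same route as the paper: set $v=0$, use invertibility of $K$ to strip off the factor $|Ku|^\alpha$, and scale $u\mapsto tu$ to pin down $\beta=\alpha+2$. One cosmetic remark: the justification ``such $u$ exists since $A\ne 0$'' is not quite right (a nonzero skew-symmetric $A$ has $u^TAu\equiv 0$) and is in any case unnecessary, since the inequality \eqref{tuscale} at $t=1$ already forces $|Ku|^\alpha\,u^TAu>0$ and hence $u^TAu>0$ for every $u\ne 0$.
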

\begin{proof}
Part (i). Assume $K$ is invertible and  $F_{A,K,\alpha}$ is monotone. Then
 \beq\label{moc1}
 \left(|Ku|^\alpha Au-|Kv|^\alpha Av\right)\cdot (u-v)
 \ge 0
\text{  for any $u,v\in \R^n$.}
 \eeq
By taking $v=0$ in \eqref{moc1}, we obtain \eqref{FAnec1}.

Part (ii). Assume $F_{A,K,\alpha}$ is $\beta$-monotone. Then there is $C_0>0$ such that 
 \beqs
 \left(|Ku|^\alpha Au-|Kv|^\alpha Av\right)\cdot (u-v)
 \ge C_0|u-v|^\beta
\text{  for any $u,v\in \R^n$.}
 \eeqs
Taking $v=0$ again yields 
 \beq\label{moc2}
 |Ku|^\alpha Au\cdot u
 \ge C_0|u|^\beta
\text{  for any $u\in \R^n$.}
 \eeq
This implies $A$ are $K$ are invertible.
It also follows \eqref{moc2} that
\beq\label{moc3}
|K|^\alpha |A| |u|^{\alpha+2}
 \ge C_0|u|^\beta
\text{  for any $u\in \R^n$.}
 \eeq
Taking $u=tw$ for a fixed $w\ne 0$ in \eqref{moc3}, and letting $t\to\infty$ and then $t\to 0^+$ yield $\beta=\alpha+2$. 
As a consequence of this fact and \eqref{moc2},
 \beqs
 |K|^\alpha |u|^\alpha Au\cdot u
 \ge C_0|u|^{\alpha+2}
\text{  for any $u\in \R^n$,}
 \eeqs
which implies \eqref{FAnec2} with $C=C_0/|K|^\alpha$.
\end{proof}

\begin{example} \label{nomoF}
In $\R^2$, let 
\beqs 
A=\begin{pmatrix}
    5 & 1/5\\
    1/5& 1/100
\end{pmatrix},\quad u=(3,5), \text{  and }v=(4,1).
\eeqs
Then $A$ is symmetric and positive definite.
However, $(|u|A u - |v|Av)\cdot (u-v) <0$, which implies $F_{A,1}$ is not monotone.
\end{example}

Because of Example \ref{nomoF}, we need to investigate the monotonicities in different situations.

\begin{theorem}\label{Monothm7}
Suppose there is a number $\kappa_0>0$ such that 
\beq\label{Avv}
u^T  Au\ge \kappa_0^2 |u|^2 \text{ for all $u\in \R^n$.}
\eeq

\begin{enumerate}[label=\tnum]
\item\label{FFm1} Then the function $F_{A,0}=\widetilde F_{A,0}$ is $2$-monotone.

\item\label{FFm2} If $A$ is symmetric and $\alpha>0$, then the function  $\widetilde F_{A,\alpha}$ is $(\alpha+2)$-monotone.

\item\label{FFm3} If $\alpha=\kappa_0^2/\|A\|_{\rm op}$, then the function $F_{A,\alpha}$ is monotone.

\noindent 
If $\alpha< \kappa_0^2/\|A\|_{\rm op}$, then the function $F_{A,\alpha}$ is $(\alpha+2)$-monotone.
\end{enumerate}
\end{theorem}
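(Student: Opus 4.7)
Each part of Theorem \ref{Monothm7} calls for a different trick, but all three follow from the coercivity hypothesis \eqref{Avv} combined with tools already developed in the excerpt.

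\textbf{Part \ref{FFm1}.} This is immediate. For any $u,v\in\R^n$, write
\[
(F_{A,0}(u)-F_{A,0}(v))\cdot(u-v)=(u-v)^T A(u-v)\ge \kappa_0^2 |u-v|^2
\]
by \eqref{Avv}, which is exactly $2$-monotonicity.

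\textbf{Part \ref{FFm2}.} Since $A$ is symmetric and satisfies \eqref{Avv}, it is positive definite, so its symmetric square root $A^{1/2}$ is well-defined and also satisfies $|A^{1/2}w|\ge \kappa_0 |w|$ for all $w$. The plan is the change of variables $x=A^{1/2}u$, $y=A^{1/2}v$. Using symmetry of $A$, one computes
\[
(\widetilde F_{A,\alpha}(u)-\widetilde F_{A,\alpha}(v))\cdot(u-v)=(|x|^\alpha x-|y|^\alpha y)\cdot(x-y).
\]
Applying inequality \eqref{inq3} from Lemma \ref{ILem1} to the right-hand side gives a lower bound of a constant times $|x-y|^{\alpha+2}=|A^{1/2}(u-v)|^{\alpha+2}\ge \kappa_0^{\alpha+2}|u-v|^{\alpha+2}$, yielding $(\alpha+2)$-monotonicity.

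\textbf{Part \ref{FFm3}.} Here $A$ need not be symmetric, so the trick from Part \ref{FFm2} is unavailable. Instead, I follow the path-integration scheme used already in Theorems \ref{Monothm2} and \ref{Monothm3}. Fix $u,v\in\R^n$ and set $z=u-v$, $w(t)=tu+(1-t)v$, and
\[
h(t)=|w(t)|^\alpha Aw(t)\cdot z.
\]
For $t$ with $w(t)\ne 0$, a direct differentiation gives
\[
h'(t)=\alpha|w|^{\alpha-2}(w\cdot z)(Aw\cdot z)+|w|^\alpha Az\cdot z.
\]
Using Cauchy--Schwarz together with \eqref{op1} on the first term and \eqref{Avv} on the second, I obtain
\[
h'(t)\ge \bigl(\kappa_0^2-\alpha\|A\|_{\rm op}\bigr)|w(t)|^\alpha|z|^2.
\]
When $\alpha=\kappa_0^2/\|A\|_{\rm op}$ the right-hand side is nonnegative, so integrating from $0$ to $1$ gives $(F_{A,\alpha}(u)-F_{A,\alpha}(v))\cdot(u-v)=h(1)-h(0)\ge 0$. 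When $\alpha<\kappa_0^2/\|A\|_{\rm op}$, the coefficient $c:=\kappa_0^2-\alpha\|A\|_{\rm op}$ is strictly positive, and \eqref{intineq} with exponent $\alpha$ yields
\[
\int_0^1 |w(t)|^\alpha \d t\ge \frac{|z|^\alpha}{2^{\alpha+1}(\alpha+1)},
\]
so integration produces the $(\alpha+2)$-monotonicity estimate with constant $c/(2^{\alpha+1}(\alpha+1))$.

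The main technical nuisance is the possibility that the segment $\{w(t):t\in[0,1]\}$ passes through the origin, since the formula for $h'(t)$ uses $|w|^{\alpha-2}$. I expect to handle this exactly as in the closing paragraph of the proof of Lemma \ref{Bprop}: either notice that the pointwise lower bound $h'(t)\ge (\kappa_0^2-\alpha\|A\|_{\rm op})|w(t)|^\alpha|z|^2$ is free of the singularity and holds a.e., and $h$ is absolutely continuous (since $F_{A,\alpha}$ is continuous on $\R^n$ for $\alpha>0$), so the fundamental theorem of calculus still applies; or use the perturbation trick, approximating $u,v$ by $u_\varepsilon,v_\varepsilon$ whose segment avoids the origin (possible because $n\ge 2$), then passing to the limit $\varepsilon\to 0$ using continuity of both sides.
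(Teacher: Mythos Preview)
Your proof is correct. Parts \ref{FFm1} and \ref{FFm3} match the paper's arguments essentially line for line, including the handling of the origin in Part \ref{FFm3} via perturbation.

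Part \ref{FFm2} is where you diverge. The paper re-runs the path-integration scheme: with $h(t)=|A^{1/2}w(t)|^\alpha\,A^{1/2}w(t)\cdot A^{1/2}z$ it computes
\[
h'(t)=|A^{1/2}w|^\alpha|A^{1/2}z|^2+\alpha|A^{1/2}w|^{\alpha-2}(A^{1/2}w\cdot A^{1/2}z)^2\ge \kappa_0^{\alpha+2}|w|^\alpha|z|^2,
\]
and then invokes \eqref{intineq} to get the constant $\kappa_0^{\alpha+2}/(2^{\alpha+1}(\alpha+1))$. Your substitution $x=A^{1/2}u$, $y=A^{1/2}v$ collapses the problem to the scalar-coefficient inequality \eqref{inq3} in one step, which is shorter and yields the (generally sharper) constant $\kappa_0^{\alpha+2}/2^{1+(\alpha-1)^+}$. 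The paper's route has the minor advantage of being self-contained and parallel to the other parts, but your reduction to Lemma \ref{ILem1} is the cleaner argument.
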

\begin{proof}
Part \ref{FFm1}. Because $F_{A,0}=\widetilde F_{A,0}$ is linear, the assumption \eqref{Avv} implies that it is $2$-monotone.

\medskip
Part \ref{FFm2}. Denote $F=\widetilde F_{A,\alpha}$. In this case, $A^{1/2}$ is symmetric and we note, for any $u\in\R^n$, that
\beqs
|A^{1/2}u| = (A^{1/2}u\cdot A^{1/2}u)^{1/2} = (u\cdot A^{1/2} A^{1/2}u)^{1/2} = (u\cdot Au)^{1/2} \ge \kappa_0 |u|,
\eeqs
hence,
\beq\label{Fuu}
F(u)\cdot u= |A^{1/2}u|^{\alpha+2} \ge \kappa_0^{\alpha+2} |u|^{\alpha+2}.
\eeq

We will prove that, for any $u,v\in\R^n$,
\beq\label{Ahauv}
I\eqdef (F(u)-F(v))\cdot (u-v)\ge \frac{\kappa_0^{\alpha+2}}{2^{\alpha+1}(\alpha+1)}  |u-v|^{\alpha+2}.
\eeq

Thanks to inequality \eqref{Fuu}, one obtains \eqref{Ahauv} when  $u=0$ or $v=0$.

Consider $u,v\in \R^n\setminus\{0\}$ now. It suffices to establish \eqref{Ahauv} when the line segment connecting $u$ and $v$ does not contain the origin. Denote $z=u-v$, $w(t)=tu+(1-t)v$ for $t\in [0,1]$, and 
\beqs
h(t)=F(w(t))\cdot z=|A^{1/2}w(t)|^\alpha (A^{1/2}w(t)\cdot A^{1/2}z).
\eeqs
Then  $w(t)\ne 0$ for all $t\in[0,1]$, and 
\begin{align*}
    h'(t)
    &=|A^{1/2}w(t)|^\alpha |A^{1/2}z|^2 +\alpha |A^{1/2}w(t)|^{\alpha-2} (A^{1/2}w(t)\cdot A^{1/2}z)^2\\
    &\ge |A^{1/2}w(t)|^\alpha |A^{1/2}z|^2\ge \kappa_0^{\alpha+2}|w(t)|^\alpha |z|^2.
\end{align*}
By The Fundamental Theorem of Calculus and inequality \eqref{intineq}, we obtain
\beqs
I=\int_0^1 h'(t)\d t\ge \frac{\kappa_0^{\alpha+2}}{2^{\alpha+1}(\alpha+1)} |z|^{\alpha+2},
\eeqs
which proves \eqref{Ahauv}.
Therefore, $F$ is $(\alpha+2)$-monotone. 

\medskip
Part \ref{FFm3}. Denote $F=F_{A,\alpha}$. Note, for all $u\in \R^n$, that
$F(u)\cdot u\ge \kappa_0^2 |u|^{\alpha+2}$.

Similar to part \ref{FFm2}, it suffices to consider $u,v\in\R^n\setminus\{0\}$ and the line segment connecting $u$ and $v$ does not contain the origin. With the same $I$, $z$ and $w(t)$ as in part \ref{FFm2}, we define, for $t\in[0,1]$, 
$h(t)=|w(t)|^\alpha Aw(t)\cdot z$.
Then
\begin{align*}
    h'(t)&=|w(t)|^\alpha (Az)\cdot z +\alpha |w(t)|^{\alpha-2} (w(t)\cdot z) Aw(t)\cdot z\\
    &\ge \kappa_0^2 |w(t)|^\alpha |z|^2 -\alpha |w(t)|^{\alpha-2} \cdot |w(t)||z| \cdot \|A\|_{\rm op} |w(t)||z|\\
    &= (\kappa_0^2-\alpha \|A\|_{\rm op}) |w(t)|^\alpha |z|^2.
\end{align*}

If $\alpha=\kappa_0^2/\|A\|_{\rm op}$, then $h'(t)\ge 0$, which implies $I\ge 0$. 
Therefore, $F_{A,\alpha}$ is monotone. 

If $\alpha<\kappa_0^2/\|A\|_{\rm op}$, Then $c\eqdef \kappa_0^2-\alpha \|A\|_{\rm op}$ is positive.
By \eqref{intineq}, we have
\beq
I=\int_0^1 h'(t)\d t\ge \frac{c|z|^{\alpha}}{2^{\alpha+1}(\alpha+1)}|z|^2.
\eeq
Thus, $F_{A,\alpha}$ is $(\alpha+2)$-monotone. 
\end{proof}

Part \ref{FFm2} of Theorem \ref{Monothm7} means that the flow is close to (anisotropic) Darcy's flows ($\alpha=0$).
Also, if we assume additionally that $A$ is symmetric, then we can take $\kappa_0^2$ to be the smallest eigenvalue $\lambda_{\min}$ of $A$, while $\|A\|_{\rm op}$ is then largest eigenvalue $\lambda_{\max}$ of $A$. Thus, the quotient $\kappa_0^2/\|A\|_{\rm op}$ is simply $\lambda_{\min}/\lambda_{\max}$.

For large $\alpha$, the following is a counterpart of Theorem \ref{Monothm3}.

\begin{theorem}\label{Monothm5}
Let $\alpha>0$.
   \begin{enumerate}[label=\tnum]
       \item If there exists $\lambda>0$ such that
    \beq\label{bbs}
    \|A-\lambda I_n\|_{\rm op}\le \frac{\lambda}{1+\alpha},
    \eeq
then $F_{A,\alpha}$ is monotone.

\item If inequality \eqref{bbs} is a strict inequality, then $F_{A,\alpha}$ is $(\alpha+2)$-monotone.
   \end{enumerate} 
\end{theorem}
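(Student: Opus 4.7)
The plan is to mimic the Fundamental Theorem of Calculus argument used in Theorem \ref{Monothm3}, writing $A$ as a perturbation of $\lambda I_n$ and tracking how the perturbation $M=A-\lambda I_n$ enters the derivative of the relevant one-variable auxiliary function.

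Fix $u,v\in\R^n$. By a limiting argument identical to the final paragraph of the proof of Lemma \ref{Bprop}, it suffices to treat the case when the segment $[u,v]$ does not contain the origin. Set $z=u-v$ and $w(t)=tu+(1-t)v$ for $t\in[0,1]$, so $w(t)\ne 0$ on $[0,1]$. Define
\beqs
h(t)=|w(t)|^\alpha Aw(t)\cdot z,
\eeqs
so that
\beqs
I\eqdef (F_{A,\alpha}(u)-F_{A,\alpha}(v))\cdot(u-v)=h(1)-h(0)=\int_0^1 h'(t)\,\d t.
\eeqs
Differentiating and writing $A=\lambda I_n+M$ with $M=A-\lambda I_n$ splits $h'(t)$ into an isotropic part and a perturbation part: schematically,
\beqs
h'(t)=\lambda\left[\alpha|w|^{\alpha-2}(w\cdot z)^2+|w|^\alpha|z|^2\right]+\left[\alpha|w|^{\alpha-2}(w\cdot z)(Mw\cdot z)+|w|^\alpha(Mz\cdot z)\right].
\eeqs

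The isotropic bracket is at least $\lambda|w|^\alpha|z|^2$ since $(w\cdot z)^2\ge 0$ and $\alpha>0$. For the perturbation bracket, applying Cauchy--Schwarz together with \eqref{op1} bounds the two summands respectively by $\alpha\|M\|_{\rm op}|w|^\alpha|z|^2$ and $\|M\|_{\rm op}|w|^\alpha|z|^2$. Adding them yields the crucial cancellation factor $(1+\alpha)$:
\beqs
h'(t)\ge \bigl(\lambda-(1+\alpha)\|M\|_{\rm op}\bigr)|w(t)|^\alpha|z|^2.
\eeqs
Under hypothesis \eqref{bbs} the right-hand side is nonnegative, hence $I\ge 0$ and $F_{A,\alpha}$ is monotone. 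When the inequality in \eqref{bbs} is strict, setting $c=\lambda-(1+\alpha)\|M\|_{\rm op}>0$ and integrating, then invoking \eqref{intineq} with exponent $\alpha$, gives
\beqs
I\ge c|z|^2\int_0^1|w(t)|^\alpha\,\d t\ge \frac{c}{2^{\alpha+1}(\alpha+1)}|u-v|^{\alpha+2},
\eeqs
which establishes $(\alpha+2)$-monotonicity.

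The main technical obstacle is obtaining the constant $(1+\alpha)$ in the perturbation estimate with the right distribution between the two cross terms; a looser application of Cauchy--Schwarz would produce a larger constant and weaken \eqref{bbs}. The argument above handles this by keeping the factor $(w\cdot z)^2$ as an isotropic positive contribution and discarding it (instead of overestimating) only in the perturbation term, where the bound $|w\cdot z|\le|w||z|$ produces exactly the coefficient $\alpha$, matched by the coefficient $1$ coming from $|Mz\cdot z|\le\|M\|_{\rm op}|z|^2$. All other steps (the limiting argument through the origin and the integration lemma \eqref{intineq}) are routine and have already appeared in Theorems \ref{Monothm3} and \ref{Monothm7}.
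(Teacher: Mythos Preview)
Your proof is correct and follows essentially the same approach as the paper: the same auxiliary function $h(t)=|w(t)|^\alpha Aw(t)\cdot z$, the same decomposition $A=\lambda I_n+M$, and the same lower bound $h'(t)\ge(\lambda-(1+\alpha)\|M\|_{\rm op})|w|^\alpha|z|^2$. In part (ii) you even apply \eqref{intineq} with the correct exponent $\alpha$, yielding the constant $c/(2^{\alpha+1}(\alpha+1))$ for the $(\alpha+2)$-monotonicity, which is cleaner than the paper's printed proof (where a minor slip writes $|w(t)|$ in place of $|w(t)|^\alpha$).
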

\begin{proof}
Part (i).  
Given $u,v\in\R^n$. Define $z=u-v$, and 
$$w(t)=tu-(1-t)v,\quad h(t)=|w(t)|^\alpha Aw(t)\cdot z \text{ for $t\in[0,1]$.}$$
Again, without loss of generality, we assume $w(t)\ne 0$ for all $t\in[0,1]$. 
Using the Fundamental Theorem of Calculus, we have
$$I\eqdef (|u|^\alpha A u-|v|^\alpha A(v))\cdot (u-v)=h(1)-h(0)=\int_0^1 h'(t)\d t.$$

We calculate 
\beq
h'(t)=|w|^\alpha Az\cdot z +\alpha |w|^{\alpha-2} (w\cdot z)Aw\cdot z.
\eeq

Denote $M=A-\lambda I_n$. Then, thanks to \eqref{bbs},  $\|M\|_{\rm op}\le \lambda/(1+\alpha)$. 

With $A=\lambda I_n+M$, we have
\beq\label{hp}
\begin{aligned}
h'(t)
&=\lambda|w|^\alpha|z|^2+|w|^\alpha M z\cdot z
+\lambda|w|^{\alpha-2}(w\cdot z)^2 +\alpha |w|^{\alpha-2}(w\cdot z)M w\cdot z \\
&\ge(\lambda-(1+\alpha)\|M\|_{\rm op}  )|w|^\alpha |z|^2.
\end{aligned}
\eeq 
Thus, $h'(t)\ge 0$ which implies $I\ge 0$, that is, $F_{A,\alpha}$ is monotone.

Part (ii). In this case, we have  $c\eqdef \lambda-(1+\alpha)\|M\|_{\rm op}>0$. Combining this with \eqref{hp} and then applying inequality \eqref{intineq} give 
\beq
I\ge c|z|^2\int_0^1 |w(t)| \d t\ge \frac{c} 8 |z|^3. 
\eeq
Therefore, $F_{A,\alpha}$ is $3$-monotone. 
\end{proof}

More specific criteria are next, which are counterparts of Theorem \ref{Monothm4}.

\begin{theorem}\label{Monothm6}
Let $\alpha>0$. 
If
\beq\label{trB}
\frac{{\rm Tr}(A)}{|A|}\ge \sqrt{n-\frac{1}{(1+\alpha)^2}},
\eeq
then $F_{A,\alpha}$ is monotone.

If inequality \eqref{trB} is strict, then $F_{A,\alpha}$ is $(\alpha+2)$-monotone.
\end{theorem}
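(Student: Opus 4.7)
The plan is to reduce Theorem \ref{Monothm6} to Theorem \ref{Monothm5} in exact analogy with how Theorem \ref{Monothm4} was deduced from Theorem \ref{Monothm3}: I will produce a scalar $\lambda>0$ fulfilling the hypothesis \eqref{bbs}. Since the first inequality in \eqref{nnorms} gives $\|A-\lambda I_n\|_{\rm op}\le |A-\lambda I_n|$, it suffices to exhibit $\lambda>0$ with
$$|A-\lambda I_n|^2\le \frac{\lambda^2}{(1+\alpha)^2}.$$

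Expanding the Frobenius norm entrywise,
$$|A-\lambda I_n|^2=\sum_{i,j}(a_{ij}-\lambda\delta_{ij})^2=|A|^2-2\lambda\,{\rm Tr}(A)+n\lambda^2,$$
so the displayed inequality is equivalent to the quadratic condition
$$P(\lambda):=\Big(n-\tfrac{1}{(1+\alpha)^2}\Big)\lambda^2-2\,{\rm Tr}(A)\,\lambda+|A|^2\le 0.$$
Since $n\ge 2$ and $\alpha>0$, the leading coefficient is strictly positive. The hypothesis \eqref{trB} (for $A\neq 0$, which I may assume, as $A=0$ makes $F_{A,\alpha}\equiv 0$ trivially monotone) forces ${\rm Tr}(A)>0$ and is precisely the statement that the discriminant of $P$ is nonnegative. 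Hence $P$ has a real root; because $P(0)=|A|^2>0$ while the sum of the two roots equals $2\,{\rm Tr}(A)/(n-(1+\alpha)^{-2})>0$, both roots are in fact strictly positive. Picking any such root $\lambda_0$ gives $P(\lambda_0)\le 0$, and applying part (i) of Theorem \ref{Monothm5} with this $\lambda_0$ delivers monotonicity of $F_{A,\alpha}$.

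For the second assertion, strict inequality in \eqref{trB} makes the discriminant of $P$ strictly positive, so $P$ has two distinct positive roots $\lambda_-<\lambda_+$ and any $\lambda_0\in(\lambda_-,\lambda_+)$ yields $P(\lambda_0)<0$; this upgrades \eqref{bbs} to a strict inequality, and part (ii) of Theorem \ref{Monothm5} then supplies $(\alpha+2)$-monotonicity. No step of the argument is substantive: everything reduces to manipulating a one-variable quadratic, with the only minor checks being ${\rm Tr}(A)>0$ and the positivity of the leading coefficient, both immediate from the standing hypotheses.
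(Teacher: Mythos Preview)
Your proof is correct and follows essentially the same route as the paper: reduce to Theorem \ref{Monothm5} via \eqref{nnorms} by finding $\lambda>0$ with $|A-\lambda I_n|^2\le \lambda^2/(1+\alpha)^2$, which unwinds to the same quadratic inequality in $\lambda$. You supply a bit more detail than the paper does on why the root is positive and on the trivial case $A=0$, but the argument is otherwise identical.
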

\begin{proof}
We solve for a solution $\lambda>0$ of the inequality
\beq
|A-\lambda I_n|^2\le \lambda^2/(1+\alpha)^2
\eeq
which is equivalent to 
\beq\label{Blam}
\left( n-\frac1{(1+\alpha)^2}\right) \lambda^2 -2{\rm Tr}(A)\lambda  +|A|^2\le 0.
\eeq

Under condition \eqref{trB}, the quadratic inequality \eqref{Blam} has a solution $\lambda>0$. Together with relation \eqref{nnorms}, we have \eqref{bbs}, and, thanks to Theorem \ref{Monothm5}(i), $F_{A,\alpha}$ is monotone.

When the inequality \eqref{trB} is strict, there is a solution $\lambda>0$ of the strict inequality in  \eqref{Blam}. Thanks to \eqref{nnorms} again, we have the strict inequality in \eqref{bbs}, and, by Theorem \ref{Monothm5}(ii), $F_{A,\alpha}$ is $(\alpha+2)$-monotone.
\end{proof}

\begin{lemma}\label{liplem}
Let $\alpha>0$ and $A$, $K$ be  $n\times n$ matrices.
If either $\alpha\ge 1$ or $K$ is invertible,
then there is a constant $C>0$ such that, for any $u,v\in \R^n$, 
\beq\label{Lip0}   \big ||Ku|^\alpha Au-|Kv|^\alpha Av\big |\le  C(|u|^\alpha+|v|^\alpha)|u-v|.
\eeq 
\end{lemma}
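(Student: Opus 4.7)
The plan is to treat the two alternatives in the hypothesis separately, since they call for genuinely different strategies. When $K$ is invertible I will change variables via $\tilde u = Ku$ to pull the estimate back to an application of \eqref{inq1} for the map $x \mapsto |x|^\alpha x$. When only $\alpha \ge 1$ is assumed, I will use a telescoping decomposition that separates the linear part from the scalar-factor difference $|Ku|^\alpha - |Kv|^\alpha$, and control the latter via \eqref{ele2}.

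In the first case, setting $\tilde u = Ku$ and $\tilde v = Kv$, and observing that $|Ku|^\alpha$ is a scalar, we have the identity
\[
|Ku|^\alpha Au - |Kv|^\alpha Av = AK^{-1}\bigl(|\tilde u|^\alpha\tilde u - |\tilde v|^\alpha\tilde v\bigr).
\]
Invoking \eqref{inq1} with $p = \alpha$ and then bounding $|\tilde u| \le |K||u|$, $|\tilde v| \le |K||v|$, and $|\tilde u - \tilde v| \le |K||u-v|$ via \eqref{op1} yields \eqref{Lip0} with constant $C = 2^{(\alpha-1)^+}|AK^{-1}||K|^{\alpha+1}$.

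In the second case, I write
\[
|Ku|^\alpha Au - |Kv|^\alpha Av = |Ku|^\alpha A(u-v) + \bigl(|Ku|^\alpha - |Kv|^\alpha\bigr)Av,
\]
whose first piece is bounded by $|K|^\alpha|A||u|^\alpha|u-v|$. For the second, I apply \eqref{ele2} with $p = \alpha > 1$ (or the triangle inequality when $\alpha = 1$) to get $\bigl||Ku|^\alpha - |Kv|^\alpha\bigr| \le 2^{(\alpha-2)^+}|K|^\alpha\bigl(|u|^{\alpha-1} + |v|^{\alpha-1}\bigr)|u-v|$, then use the Young-type bound $|u|^{\alpha-1}|v| \le |u|^\alpha + |v|^\alpha$ (a consequence of \eqref{Yineq}) to obtain $\bigl(|u|^{\alpha-1} + |v|^{\alpha-1}\bigr)|v| \le 2\bigl(|u|^\alpha + |v|^\alpha\bigr)$, and assemble \eqref{Lip0}.

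There is no substantive obstacle beyond choosing the right elementary inequality from Section \ref{prelim} for each case. The only delicate point is why the Case 2 decomposition fails for $\alpha \in (0,1)$ without invertibility of $K$: the sharp alternative \eqref{ele1} then yields only $\bigl||Ku|^\alpha - |Kv|^\alpha\bigr| \le |K|^\alpha|u-v|^\alpha$, producing a term $|A||v|\,|K|^\alpha |u-v|^\alpha$ that is not of the required form $(|u|^\alpha + |v|^\alpha)|u-v|$ as $|u - v| \to 0$ with $|v|$ bounded away from zero. The change of variables in Case 1 circumvents this by transferring the power $\alpha$ inside the Lipschitz-type bound for $x \mapsto |x|^\alpha x$, which is exactly the reason the hypothesis must include one of the two listed alternatives.
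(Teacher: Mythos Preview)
Your proof is correct. For the case $\alpha\ge 1$ you use exactly the same telescoping decomposition as the paper (its inequality \eqref{KA1}), followed by \eqref{ele2} and Young's inequality, so that part is identical in substance.

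Where you differ is in the case $K$ invertible. The paper handles $\alpha\in(0,1)$ with $K$ invertible by the Fundamental Theorem of Calculus along the segment $w(t)=tu+(1-t)v$: it differentiates $h(t)=|Kw(t)|^\alpha Aw(t)$, bounds $|h'(t)|$ using $|Kw|^{\alpha-1}\le \|K^{-1}\|_{\rm op}^{1-\alpha}|w|^{\alpha-1}$ (crucially needing $K$ invertible because $\alpha-1<0$), and integrates. This requires the usual perturbation to avoid the origin on the segment. Your route is more direct: the substitution $\tilde u=Ku$ gives the exact identity $|Ku|^\alpha Au=AK^{-1}\bigl(|\tilde u|^\alpha\tilde u\bigr)$, so the whole estimate collapses to a single application of \eqref{inq1} for $x\mapsto|x|^\alpha x$, followed by trivial bounds $|\tilde u|\le|K||u|$. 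This is shorter and avoids both the differentiation and the segment-through-origin issue; it also handles all $\alpha>0$ uniformly when $K$ is invertible. The paper's path-integral method, on the other hand, is the template it uses repeatedly elsewhere (Theorems \ref{Monothm2}, \ref{Monothm3}, \ref{Monothm7}), so it fits the paper's stylistic pattern and would extend to situations where a clean factorization through $K^{-1}$ is unavailable.
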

\begin{proof} First, we observe that  
\beq \label{KA1}
\begin{aligned}
        \big||Ku|^\alpha Au-|Kv|^\alpha Av\big |
        & \le |Ku|^\alpha |A(u-v)|+\big ||Ku|^\alpha -|Kv|^\alpha \big ||Av|\\
        &\le \|K\|_{\rm op}^\alpha  \|A\|_{\rm op}|u|^\alpha |u-v| 
        +\|A\|_{\rm op}\big ||Ku|^\alpha -|Kv|^\alpha \big ||v|.
    \end{aligned} 
\eeq  

\noindent\emph{Case $\alpha=1$.} Estimating $\big ||Ku| -|Kv|\big |\le |Ku-Kv|\le \|K\|_{\rm op}|u-v|$
in \eqref{KA1}, we obtain \eqref{Lip0} with $C=\|K\|_{\rm op}\|A\|_{\rm op}$.

\medskip
\noindent\emph{Case $\alpha> 1$.}
Using inequality \eqref{ele2}, we estimate$$\big ||Ku|^\alpha -|Kv|^\alpha \big |\le 2^{\alpha-1}( |Ku|^{\alpha-1}+|Kv|^{\alpha-1}) |Ku-Kv|
\le 2^{\alpha-1} \|K\|_{\rm op}^\alpha ( |u|^{\alpha-1}+|v|^{\alpha-1}) |u-v|.$$
Combining this with \eqref{KA1} yields 
\begin{align*}        
\big||Ku|^\alpha Au-|Kv|^\alpha Av\big |
&\le 2^{\alpha-1}\|K\|_{\rm op}^\alpha \|A\|_{\rm op} (|u|^\alpha +|u|^{\alpha-1}|v|+|v|^\alpha)|u-v|.
    \end{align*} 
By Young's inequality, we estimate $|u|^{\alpha-1}|v|\le |u|^\alpha+|v|^\alpha$, and obtain  \eqref{Lip0} with constant $C=2^\alpha \|K\|_{\rm op}^\alpha \|A\|_{\rm op}$.

\medskip
\noindent\emph{Case $\alpha\in(0,1)$ and $K$ is invertible.}    
Without  loss of generality, assume the line segment connecting $u$ and $v$ does not contain the origin.
Let $z=u-v$ and define, for $t\in [0,1]$, 
\beqs
w(t)=tu+(1-t)v,\quad h(t)=|Kw(t)|^\alpha Aw(t).
\eeqs
By The Fundamental Theorem of Calculus,
\beq
I\eqdef |Ku|^\alpha Au-|Kv|^\alpha Av=h(1)-h(0)=\int_0^1 h'(t)\d t.
\eeq
With  $w(t)\ne 0$ for all $t\in[0,1]$,  we have $Kw(t)\ne 0$, and 
\beqs 
    h'(t)
    =|Kw(t)|^\alpha Az +\alpha |Kw(t)|^{\alpha-2} (Kw(t)\cdot Kz) Aw(t).
\eeqs 
We estimate
\beq
    |h'(t)|
    \le  \|K\|_{\rm op}^\alpha \|A\|_{\rm op}  |w(t)|^\alpha |z|
    +\alpha \|K\|_{\rm op}\|A\|_{\rm op}  |Kw(t)|^{\alpha-1}|w(t)| |z|.
\eeq
With $\alpha-1<0$, we use inequality \eqref{op2} to have $|Kw(t)|^{\alpha-1}\le \|K^{-1}\|_{\rm op}^{1-\alpha}|w(t)|^{\alpha-1}$.
Thus, there is $C_0>0$ depending on $A,K,\alpha$ such that
$|h'(t)|\le C_0 |w(t)|^\alpha |z|.$
Therefore,
\begin{align*}
|I|&\le \int_0^1 |h'(t)|\d t\le C_0  |z|  \int_0^1 |w(t)|^\alpha\d t \\
&\le  C_0 |z|  \int_0^1  t^\alpha |u|^\alpha +(1-t)^\alpha |v|^\alpha \d t
=\frac{C_0}{\alpha+1}  (|u|^\alpha + |v|^\alpha) |z| ,
\end{align*}
which  proves \eqref{Lip0}.
\end{proof}

\begin{remark}\label{liprmk}
In the case $\alpha\in(0,1)$ and $K$ is not invertible, we only have 
\beq\label{Lip1}
   \big ||Ku|^\alpha Au-|Kv|^\alpha Av\big |\le 
   \|K\|_{\rm op}^\alpha \|A\|_{\rm op}(|u|^\alpha |u-v| +|v||u-v|^\alpha).
    \eeq    
This is weaker than \eqref{Lip0} when $|u-v|$ is small.    
Inequality \eqref{Lip1}, in fact, is a consequence of \eqref{KA1} and 
the following estimate, thanks to inequality \eqref{ele1},  
$$\big ||Ku|^\alpha -|Kv|^\alpha \big |\le |Ku-Kv|^\alpha \le \|K\|_{\rm op}^\alpha |u-v|^\alpha.$$
\end{remark}

\medskip
Consider an integer $N\ge 0$,  real numbers $0\le \alpha_0<\alpha_1<\alpha_2<\ldots<\alpha_N$,
and $n\times n$ matrices $\bar A_0,\bar A_1,\ldots,\bar A_N$.
For $0\le k\le N$, let $F_k=F_{\bar A_k,\alpha_k}$ or $F_k=\widetilde F_{\bar A_k,\alpha_k}$. 

Let  $B$ be a trilinear mapping  from $\R^n\times \R^n\times\R^n$ to $\R^n$ as in \eqref{Bnd}.

Define the mapping $F:\R^n\to\R^n$ by 
\begin{align}
\label{reF1}
F(u)&=G_B(u), \text{ or }\\
\label{reF2}
F(u)&=\sum_{k=0}^N F_k(u), \text{ or }\\
\label{reF3}
F(u)&=\sum_{k=0}^N F_k(u)+ G_B(u).
\end{align}

Note that if $\alpha_0=0$ then we have the (anisotropic) Darcy term $\bar A_0u$ in \eqref{reF2} and \eqref{reF3}.

\begin{proposition}\label{thmix}
Let $F$ be as in \eqref{reF1}--\eqref{reF3}. 
Assume the followings.
\begin{enumerate}[label=\rnum]    
    \item \label{acond} In the case \eqref{reF1}, the function $G_B$ is $3$-monotone. 

    \item\label{bcond} In the case \eqref{reF2}, each function $F_k$ is monotone for $0\le k\le N-1$, and $F_N$ is $(\alpha_N+2)$-monotone with $\alpha_N>0$.

    \item\label{ccond} In the case \eqref{reF3}, all functions $F_k$ and $G_B$ are monotone. In addition,

    {\rm (c.1)} if $\alpha_N<1$, then $G_B$ is  $3$-monotone,

    {\rm (c.2)} if $\alpha_N>1$, then $F_N$ is  $(\alpha_N+2)$-monotone,

    {\rm (c.3)} if $\alpha_N=1$, then $F_N$ or $G_B$ is  $3$-monotone.
\end{enumerate}

Let 
$\beta=1$ in the case \eqref{reF1},
$\beta=\alpha_N$ in the case \eqref{reF2}, and 
$\beta=\max\{\alpha_N,1\}$ in the case \eqref{reF3}.
Then there are positive constants $c_1$ and $c_2$ such that, for all $u,v\in \R^n$,
\begin{align}
\label{F3} |F(u)-F(v)|&\le c_1(1+|u|^\beta+|v|^\beta)|u-v|,\\
\label{F4}  (F(u)-F(v))\cdot (u-v)&\ge c_2|u-v|^{\beta+2}.
\end{align}
\end{proposition}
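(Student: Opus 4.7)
The plan is to establish \eqref{F3} and \eqref{F4} separately, handling the three forms of $F$ in parallel, by assembling component-wise estimates already proved in Sections \ref{mod1sec} and \ref{mod2sec}.

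For \eqref{F3}, I apply the triangle inequality and estimate each summand. For the $G_B$ term, inequality \eqref{co} of Lemma \ref{Bprop} yields $|G_B(u)-G_B(v)|\le 2M_*(|u|+|v|)|u-v|$. For each $F_k=F_{\bar A_k,\alpha_k}$ the matrix $K=I_n$ is invertible; for each $\widetilde F_{\bar A_k,\alpha_k}$ the coercivity \eqref{Avv} (implicit in the $(\alpha_k+2)$-monotonicity assumption in the relevant subcase) guarantees that $\bar A_k^{1/2}$ is invertible. Consequently Lemma \ref{liplem} applies uniformly and gives
\[
|F_k(u)-F_k(v)|\le C_k(|u|^{\alpha_k}+|v|^{\alpha_k})|u-v|
\]
for $k=0,\ldots,N$. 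Since $\alpha_k\le\beta$ whenever an $F_k$ appears, and $\beta\ge 1$ whenever $G_B$ appears, Young's inequality \eqref{Yineq} yields $|u|^{\alpha_k}\le 1+|u|^\beta$ and $|u|\le 1+|u|^\beta$. Summing the pieces and absorbing the finitely many constants produces \eqref{F3}.

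For \eqref{F4}, I exploit the additivity of the inner product. In case \eqref{reF1}, the inequality is exactly the $3$-monotonicity of $G_B$ assumed in \ref{acond}. In case \eqref{reF2}, each $F_k$ for $k<N$ contributes a nonnegative quantity by monotonicity, while $F_N$ contributes at least $C|u-v|^{\alpha_N+2}=C|u-v|^{\beta+2}$, so summing yields \eqref{F4}. In case \eqref{reF3}, the monotone pieces contribute nonnegatively and exactly one summand supplies the positive power lower bound: $G_B$ with $\beta+2=3$ in subcase (c.1), $F_N$ with $\beta+2=\alpha_N+2$ in (c.2), and whichever of $F_N$ or $G_B$ is $3$-monotone in (c.3), where again $\beta+2=3=\alpha_N+2$.

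The main subtlety, rather than a genuine obstacle, lies in bookkeeping: one must verify that the definition $\beta=\max\{\alpha_N,1\}$ in case \eqref{reF3} is simultaneously compatible with the upper bound (so that every $|u|^{\alpha_k}$ and the factor $|u|+|v|$ from $G_B$ is dominated by $1+|u|^\beta+|v|^\beta$) and with the lower bound (so that the single $(\beta+2)$-monotone summand has the correct exponent). Once the three subcases are unwound as above, both inequalities follow from direct application of Lemmas \ref{Bprop} and \ref{liplem} together with the assumed monotonicities; no fundamentally new computation is required.
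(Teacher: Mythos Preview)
Your proposal is correct and follows essentially the same approach as the paper: use \eqref{co} and Lemma~\ref{liplem} term-by-term for \eqref{F3}, then dominate every power $|u|^{\alpha_k}$ (and the factor $|u|+|v|$ from $G_B$) by $1+|u|^\beta+|v|^\beta$ via \eqref{Yineq}; for \eqref{F4}, discard the merely monotone summands and retain the single $(\beta+2)$-monotone one. One small correction: your justification that $\bar A_k^{1/2}$ is invertible because coercivity is ``implicit in the $(\alpha_k+2)$-monotonicity assumption'' is misstated, since for $k<N$ only monotonicity is assumed --- but this is harmless, because by the very definition \eqref{tildeFA} the function $\widetilde F_{\bar A_k,\alpha_k}$ is only formed when $\bar A_k$ is symmetric positive definite, so $\bar A_k^{1/2}$ is automatically invertible and Lemma~\ref{liplem} applies.
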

\begin{proof}
In the case \eqref{reF1}, inequalities \eqref{F3} and \eqref{F4} come from \eqref{co}  and assumption \ref{acond}.

Consider cases \eqref{reF2} and \eqref{reF3}. 
In estimates below, $C_1$ and $C_2$ are positive constants.
Let $u,v$ be any vectors in $\R^n$. By  Lemma \ref{liplem}  and Young's inequality, we have
\beq \label{Fp3}
\begin{aligned}
\left |\sum_{k=0}^N (F_k(u)-F_k(v))\right|
&\le C_1 \sum_{k=0}^N (|u|^{\alpha_k}+|v|^{\alpha_k})|u-v|\\
&\le C_1(2N+1)(1+|u|^{\alpha_N}+|v|^{\alpha_N})|u-v|.
\end{aligned}
\eeq 
If $F_N$ is $(\alpha_N+2)$-monotone, then 
\beq \label{Fp4}
\left (\sum_{k=0}^N F_k(u)-\sum_{k=0}^N F_k(v)\right)\cdot (u-v) 
\ge (F_N(u)-F_N(v))\cdot (u-v)
\ge C_2|u-v|^{\alpha_N+2}.
\eeq 

Therefore, \eqref{Fp3} and \eqref{Fp4} prove \eqref{F3} and \eqref{F4} in the case \eqref{reF2}.

In the case \eqref{reF3}, combining \eqref{Fp3} with \eqref{co} and the using H\"older's inequality, we obtain \eqref{F3}. Finally, \eqref{F4} follows the fact $G_B$ is $3$-monotone for (c.1), or the fact \eqref{Fp4} for (c.2), or  one of these facts (or both) for (c.3).
\end{proof}
  
\section {Steady state flows}\label{StatProb}

We study the steady states for system \eqref{pseu} with a nonhomogeneous Dirichlet boundary condition. Redenoting $\m$ by $\uu$, and $\widetilde p$ by $p$, we particularly consider 
\beq\label{stationaryProb}
\left\{ 
\begin{aligned}
F(\uu)&=-\nabla p &&\text{ in }\Omega,\\
 \diver \uu&=f &&\text{ in }\Omega,\\
p&= \psi  &&\text{ on }\partial \Omega,
\end{aligned}
\right.
\eeq
where $f:\Omega\to \R$ and $\psi:\partial \Omega\to \R$ are given functions. The unknowns are $\uu:\Omega\to \R^n$ and $p:\Omega\to \R$.

Although our original equation has $F$ as in \eqref{genF} and \eqref{BB},
 we will study Problem \eqref{stationaryProb} with a more general form of $F$.
 Motivated by Proposition \ref{thmix}, we impose the following conditions on $F$.

\begin{assumption}\label{assuF}
The function $F:\R^n\to\R^n$ satisfies
$F(0)=0$, 
and there exist constants $s>2$ and $c_1,c_2>0$ such that, for all $x,y\in\R^n$,
\begin{align}
\label{F3a} |F(x)-F(y)|&\le c_1(1+|x|^{s-2}+|y|^{s-2})|x-y|,\\
\label{F4a}  (F(x)-F(y))\cdot (x-y)&\ge c_2 |x-y|^s.
\end{align}
\end{assumption}

Taking $y=0$ in \eqref{F3a} and \eqref{F4a}, we obtain, for all $x\in\R^n$,
\begin{align}
\label{F1s} |F(x)|&\le c_1 (|x|+|x|^{s-1}),\\
\label{F2s} F(x)\cdot x &\ge c_2|x|^s.
\end{align}

Note from \eqref{F3a} that $F$ is locally Lipschitz continuous.

We find a variational formulation for Problem \eqref{stationaryProb}. By using testing functions $\vv :\Omega\to \R^n$ for the first equation, and $q:\Omega\to \R$ for the second equation, integrating over $\Omega$, using formal integration by parts for the right-hand side of the first equations and utilizing the boundary condition in the third equation,  we formally obtain
\beq\label{WeakStationaryProb}
\left\{
\begin{aligned}
\int_\Omega F(\uu) \cdot \vv\,  \d x -\int_\Omega  p \, \diver\vv \, \d x&=- \int_{\partial\Omega}  \psi \vv\cdot \vec{\nu}  \, \d\sigma && \text{ for all }\vv\in C^\infty(\bar \Omega,\R^n),\\
\int_\Omega q \, \diver\uu \, \d x&=\int_\Omega f q \, \d x && \text{for all } q\in C^\infty(\bar \Omega).
\end{aligned}
\right.
\eeq

We need to find suitable functional spaces for solutions $(\uu,p)$ and testing funstions $\vv$, $q$.
Throughout this section, $r$ is the H\"older conjugate of $s$. Hence,
\beq
\label{a-const }
 r=\frac{s}{s-1}\in (1,2).
 \eeq 

We  define the spaces $V=\mathbf W_s({\rm div},\Omega)$ as in \eqref{Wsdef} and  $Z=L^r(\Omega)$.
Recall that their norms are $ \norm{\cdot}_{V}=\norm{\cdot}_{\mathbf W_s({\rm div},\Omega)}$ as in \eqref{Wsnorm}
and $\norm{\cdot}_Z=\norm{\cdot}_{0,r}$.

For the first integral in \eqref{WeakStationaryProb}, we define $a: \mathbf L^s(\Omega)\times \mathbf L^s(\Omega)\to \R$ by 
\beqs
a(\uu,\vv)=\int_\Omega F(\uu(x))\cdot \vv(x)\d x\text { for } \uu,\vv\in \mathbf L^s(\Omega). 
\eeqs

For the second and fourth integrals in \eqref{WeakStationaryProb}, we define a bilinear form $b:V\times Z\to \R$ by 
\beqs
b(\vv,q)=  \int_\Omega (\diver \vv) q \, \d x \text{ for }  \vv\in V, q\in Z.
\eeqs

\subsection*{Forcing function.} For the fifth integral in \eqref{WeakStationaryProb}, we assume $f\in L^s(\Omega)$ and define $\Phi_f:Z\to \R$ by
\beqs
\Phi_f(q)=\int_\Omega fq\, \d x\text{ for } q\in Z.
\eeqs
Then $\Phi_f\in Z'$ and 
\beq\label{Phinorm}  \norm{\Phi_f}_{Z'}=\norm{f}_{0,s}.
\eeq 

\subsection*{Boundary data} For the third integral in \eqref{WeakStationaryProb}, we assume  $\psi\in X_r$ and define $\mathcal T_\psi:V\to \R$ by
\beqs 
\mathcal T_\psi(\vv)=\inprod{\gamma_{{\rm n},s}(\vv),\psi}_{X_r',X_r} \text{ for }\vv\in V.
\eeqs 

Thanks to the Green's formula \eqref{green}, $\mathcal T_\psi(\vv)$ is the rigorous formulation for the boundary integral in \eqref{WeakStationaryProb}.
By \eqref{gamnorm}, one has $\mathcal T_\psi\in V'$ and 
\beq\label{VXnorms}
\norm{\mathcal T_\psi}_{V'}\le \bar c_1\norm{\psi}_{X_r}.
\eeq

\begin{definition}\label{weaksoln}
Given  $f\in L^s(\Omega)$ and $\psi\in X_r$,
a weak solution of Problem \eqref{stationaryProb} is a pair $(\uu,p)\in V\times Z$ that satisfies
\beq\label{equivform}
\left\{
\begin{aligned}
a(\uu, \vv) - b(\vv,p) &=-\mathcal T_\psi(\vv) && \text{for all }\vv\in V,\\
b(\uu,q)&=\Phi_f(q)  && \text{for all } q\in Z.
\end{aligned}
\right.
\eeq     
\end{definition}

We obtain  the existence, uniqueness, estimates and continuous dependence for weak solutions of  Problem~\eqref{stationaryProb} in the next theorem.

\begin{theorem}\label{mainthm}
The following statements hold true.

\begin{enumerate}[label=\tnum]

\item\label{t1} For any $f\in L^s(\Omega)$ and $\psi\in X_r$, there exists a  unique weak solution $(\uu,p)\in V\times Z$  of Problem \eqref{stationaryProb}.  

\item\label{t2} There is $c_3>0$ such that if  $f,\psi,(\uu,p)$ are as in part \ref{t1} then
\beq\label{udivp0}
\norm{\uu}_{0,s}+\norm{\diver \uu}_{0,s}+\norm{p}_{0,r}\le c_3\big (\norm{f}_{0,s}^{r-1}+\norm{f}_{0,s}^{s-1}
+\norm{\psi}_{X_r} ^{r-1}+ \norm{\psi}_{X_r} ).
\eeq

\item\label{t3} For $j=1,2$, assume $f_j,\psi_j,(\uu_j,p_j)$ are as in part \ref{t1}.
Let 
$$M_0=1+ \sum_{j=1}^2 (\norm{f_j}_{0,s}^{s-2}+\norm{\psi_j}_{X_r}^{2-r}).$$
Then 
\beq  \label{contpf1}
\begin{aligned}
&\|\uu_1-\uu_2\|_{0,s}+\|\diver(\uu_1-\uu_2)\|_{0,s}+\|p_1-p_2\|_{0,r}\\
&\le c_4 \left(\|f_1-f_2\|_{0,s} + M_0^r \|f_1-f_2\|_{0,s}^{r-1}+\|\psi_1-\psi_2\|_{X_r}+M_0\|\psi_1-\psi_2\|_{X_r}^{r-1}\right).
\end{aligned}
\eeq
Consequently,
\beq \label{contpf2}
\begin{aligned}
&\|\uu_1-\uu_2\|_{0,s}+\|\diver(\uu_1-\uu_2)\|_{0,s}+\|p_1-p_2\|_{0,r}
\le c_5 \left (1+ \sum_{i=1}^2 (\|f_i\|_{0,s}+\|\psi_i\|_{X_r})\right )^{r(s-2)}  \\
&\quad \times \left(\|f_1-f_2\|_{0,s}^{r-1}+\|f_1-f_2\|_{0,s}+\|\psi_1-\psi_2\|_{X_r}^{r-1}+\|\psi_1-\psi_2\|_{X_r}\right). 
\end{aligned}
\eeq 
Above, $c_4$ and $c_5$ are positive constants independent of $f_j,\psi_j,\uu_j,p_j$ for $j=1,2$.
\end{enumerate}
\end{theorem}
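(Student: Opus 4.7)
The plan is to reformulate \eqref{equivform} as a nonlinear mixed problem in the Banach spaces $V$ and $Z$, and to combine standard monotone-operator theory with the inf-sup estimate of Lemma \ref{dualnorm}. For part \ref{t1}, I would first use Lemma \ref{dualnorm} (by a standard duality argument) to solve the divergence equation $\diver \uu_0 = f$ for some $\uu_0 \in V$ with $\|\uu_0\|_V \le C\|f\|_{0,s}$. This reduces the problem to finding $\tilde \uu$ in the closed subspace $V_0 = \{\vv \in V : \diver \vv = 0\}$ solving
\[
a(\uu_0 + \tilde \uu, \vv) = -\mathcal T_\psi(\vv) \quad \text{for all } \vv \in V_0.
\]
On $V_0$ the norm $\|\cdot\|_V$ reduces to $\|\cdot\|_{0,s}$, so by Assumption \ref{assuF} the operator $\tilde \uu \mapsto a(\uu_0+\tilde \uu, \cdot)$ is continuous, bounded, strictly monotone, and coercive on the reflexive Banach space $V_0$. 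Browder--Minty then yields a unique $\tilde \uu$. The pressure $p$ is recovered from Lemma \ref{dualnorm}: the functional $\vv \mapsto a(\uu, \vv) + \mathcal T_\psi(\vv)$ vanishes on $V_0$ by construction, hence factors through $V/V_0$, and the inf-sup condition produces a unique $p \in Z$ with $b(\vv, p) = a(\uu, \vv) + \mathcal T_\psi(\vv)$ for all $\vv \in V$.

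For part \ref{t2}, test the first equation with $\vv = \uu$ to get $a(\uu, \uu) = \int_\Omega fp\,dx - \mathcal T_\psi(\uu)$. Coercivity \eqref{F2s} then yields $c_2\|\uu\|_{0,s}^s \le \|f\|_{0,s}\|p\|_{0,r} + \bar c_1\|\psi\|_{X_r}\|\uu\|_V$. Independently, applying Lemma \ref{dualnorm} to $p$ and the first equation of \eqref{equivform} gives $\|p\|_{0,r} \le C_*(\|F(\uu)\|_{0,r} + \bar c_1\|\psi\|_{X_r})$; the growth bound \eqref{F1s} and the identity $(s-1)r = s$ yield $\|F(\uu)\|_{0,r} \le C(\|\uu\|_{0,s} + \|\uu\|_{0,s}^{s-1})$. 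Combining these two estimates and applying Young's inequality with exponents $s$ and $r$ to absorb the $\|\uu\|_{0,s}^s$ term produces the bounds on $\|\uu\|_{0,s}$ and $\|p\|_{0,r}$ in \eqref{udivp0}; the bound $\|\diver \uu\|_{0,s} = \|f\|_{0,s}$ is immediate from the second equation.

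For part \ref{t3}, subtract the equations for $(\uu_1,p_1)$ and $(\uu_2,p_2)$. Testing the first difference with $\vv = \uu_1 - \uu_2$ and using strong monotonicity \eqref{F4a} gives
\[
c_2\|\uu_1-\uu_2\|_{0,s}^s \le \int_\Omega (f_1-f_2)(p_1-p_2)\,dx + \bar c_1 \|\psi_1-\psi_2\|_{X_r}\|\uu_1-\uu_2\|_V.
\]
For the pressure, Lemma \ref{dualnorm} applied to $p_1 - p_2$ together with the Lipschitz-type bound \eqref{F3a} yields, after a H\"older computation using the dimensional identity $(s-2) \cdot \tfrac{rs}{s-r} = s$, the estimate
\[
\|p_1-p_2\|_{0,r} \le C\bigl(1+\|\uu_1\|_{0,s}^{s-2}+\|\uu_2\|_{0,s}^{s-2}\bigr)\|\uu_1-\uu_2\|_{0,s} + C\|\psi_1-\psi_2\|_{X_r}.
\]
Substituting back and applying Young's inequality with exponents $s$ and $r$ to absorb $\|\uu_1-\uu_2\|_{0,s}^s$ on the right, the factor $M_0$ emerges naturally as a power of the a priori bounds; this gives \eqref{contpf1}. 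Inequality \eqref{contpf2} then follows by inserting the a priori estimate \eqref{udivp0} for $\|\uu_j\|_{0,s}$ into $M_0$.

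The principal obstacle is the exponent mismatch in part \ref{t3}: strong monotonicity produces $\|\uu_1-\uu_2\|_{0,s}^s$ while the forcing terms appear linearly and are paired with $\|p_1-p_2\|_{0,r}$, which itself depends linearly on $\|\uu_1-\uu_2\|_{0,s}$ with a coefficient of order $\|\uu_j\|_{0,s}^{s-2}$. Absorbing the cross terms via Young's inequality with precisely the right weights is what produces the $r(s-2)$ exponent on $M_0$ in \eqref{contpf2}; tracking these exponents carefully (aided by $(s-1)r = s$ and $rs/(s-r) = s/(s-2)$) is the delicate bookkeeping step, and is presumably why the paper subsumes the argument into the general abstract Theorems \ref{SolofStationaryProb}, \ref{postest}, and \ref{cdthm} before specializing to \eqref{stationaryProb}.
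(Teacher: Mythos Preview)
Your argument is correct, and for parts \ref{t2} and \ref{t3} it is essentially what the paper does (the paper packages these into the abstract Theorems \ref{postest} and \ref{cdthm} and then specializes, but the underlying estimates---testing with $\vv=\uu$, using the inf-sup bound of Lemma \ref{dualnorm} for the pressure, and applying Young's inequality to close---are the same as yours).

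For part \ref{t1}, however, you take a genuinely different route. The paper does \emph{not} reduce to the divergence-free subspace $V_0$. Instead it works on the full product space $\mathcal W=V\times Z$, where the operator $\mathcal A$ fails to be coercive (cf.\ \eqref{Aup}), and repairs this by introducing a regularized operator $\mathcal A_\varepsilon$ with penalty terms $\varepsilon R_1(\uu,\vv)=\varepsilon\int_\Omega |\diver\uu|^{s-2}\diver\uu\,\diver\vv\,\d x$ and $\varepsilon R_2(p,q)=\varepsilon\int_\Omega |p|^{r-2}p\,q\,\d x$. Browder--Minty is applied to $\mathcal A_\varepsilon$, uniform-in-$\varepsilon$ bounds are derived (Lemma \ref{stationary-sol-indep-eps}), and the solution is obtained as a weak limit as $\varepsilon\to0$. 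Your approach is more direct and classical: you solve the constraint $\diver\uu_0=f$ first (which is indeed available via the surjectivity implied by Lemma \ref{dualnorm}), apply Browder--Minty on the kernel $V_0$ where coercivity is automatic, and then recover $p$ by the inf-sup condition. This avoids the regularization machinery entirely. The paper's penalty method, on the other hand, is closer to numerical penalty/augmented-Lagrangian schemes and does not require an explicit right-inverse of the divergence, which may be why the authors chose it.
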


The proof of Theorem \ref{mainthm} will be presented in subsection \ref{mainpf} below.

\begin{remark}
Part \ref{t3} of Theorem \ref{mainthm} shows that the function that maps $(f,\psi)\in L^s(\Omega)\times X_r$ to the unique weak solution $(\uu,p)\in \mathbf W_s({\rm div},\Omega) \times L^r(\Omega)$ of Problem \eqref{stationaryProb} is locally H\"older continuous of order $(r-1)$, which belongs to $(0,1)$ thanks to property \eqref{a-const }.
\end{remark}

\subsection{Variational formulation on $V\times Z$}\label{subVZ}
Note that system  \eqref{equivform} has variational formulation on $V$ in its first equation and on $Z$ in its second equation. We will convert \eqref{equivform} to a variational formulation on $V\times Z$.

Suppose $(\uu,p)\in V\times Z$ is a solution of \eqref{equivform}. Then adding two equations in \eqref{equivform} gives
\beq \label{ahatint}
\widehat a( (\uu,p), (\vv,q)  ) =-\mathcal T_\psi(\vv)+\Phi_f(q)  \text{ for all }(\vv,q)\in V\times Z.
\eeq 
Here, $\widehat a$ is defined on $(V\times Z)^2$ by
\beqs
\widehat a( (\uu, p), (\vv,q)  )= a(\uu, \vv) - b(\vv, p) + b(\uu, q)\text{ for }(\uu,p),(\vv,q)\in V\times Z.
\eeqs

\begin{definition}
Denote ${\mathcal W}=V\times Z$ with the norm $\|(\uu,p)\|_{\mathcal W}=\|\uu\|_V+\|p\|_Z$ 
for $\uu\in V$ and $p\in Z$.     
\end{definition}

Because both $V$ and $Z$ are reflexive Banach spaces, so is $\mathcal W$.

\medskip

For any $\uu,\vv \in \mathbf L^s(\Omega)$, one has from property \eqref{F1s}  that
\beqs 
|a(\uu,\vv)|\le  \int_\Omega |F(\uu(x))\cdot \vv(x)|\d x
\le c_1\int_\Omega (|\uu(x)|+|\uu(x)|^{s-1}) |\vv(x)| \d x.
\eeqs 
Applying H\"older's inequality yields
\beq\label{a1}
\begin{aligned}
|a(\uu,\vv)|
&\le c_1 \left (\|1\|_{0,s/(s-2)}\|\uu\|_{0,s}+\|\uu\|_{0,s}^{s-1}\right ) \|\vv\|_{0,s}\\
&\le c_1(1+|\Omega|^\frac{s-2}{s}) \left (\|\uu\|_{0,s}+\|\uu\|_{0,s}^{s-1}\right ) \|\vv\|_{0,s}.    
\end{aligned}
\eeq 

For any $\vv\in V$ and $q\in Z$,  applying H\"older's inequality gives
\beq\label{b1}
|b(\vv,q)|\le \norm{\diver \vv}_{0,s}\norm{p}_{0,r} .
\eeq 

For each $(\uu,p)\in \mathcal W$, the mapping $(\vv,q)\in\mathcal W\mapsto \widehat a( (\uu,p), (\vv,q)  ) $ is linear, and one has, following \eqref{a1}, \eqref{b1} and \eqref{Wadnorm}, 
\begin{align*}
|\widehat a( (\uu,p), (\vv,q)  ) |
&\le c_1(1+|\Omega|^\frac{s-2}{s}) \left ( \|\uu\|_{0,s}+\|\uu\|_{0,s}^{s-1}\right ) \|\vv\|_{0,s} 
+ \norm{\diver \vv}_{0,s}\norm{p}_{0,r} 
+\norm{\diver \uu}_{0,s}\norm{q}_{0,r}\\
&\le \max\left\{1,c_1(1+|\Omega|^\frac{s-2}{s})\right\} ( \|\uu\|_{0,s}+\|\uu\|_{0,s}^{s-1} + \norm{p}_{0,r} 
+\norm{\diver \uu}_{0,s})\norm{(\vv,q)}_{\mathcal W}.
\end{align*}
Therefore, one can define a mapping $\mathcal A: \mathcal W \to \mathcal W'$  by 
\beqs
\intb{\mathcal A(\uu,p), (\vv,q)}_{\mathcal W',\mathcal W}=\widehat a( (\uu, p), (\vv,q)  )
\text{ for }(\uu,p),(\vv,q)\in\mathcal W.
\eeqs

\medskip
For the right-hand side of \eqref{ahatint}, we define a linear functional $\mathcal F_{\psi,f}$ on $\mathcal W$  by 
\beq \label{Fdual}
\mathcal F_{\psi,f} (\vv, q) = -\mathcal T_\psi(\vv)+\Phi_f(q), 
\text{ for }(\vv,q)\in \mathcal W.
\eeq 
For any $(\vv,q)\in \mathcal W$, one has  from \eqref{Fdual} that
$$|\mathcal F_{\psi,f} (\vv, q)| \le \norm{\mathcal T_\psi}_{V'}\norm{\vv}_V + \norm{\Phi_f}_{Z'}\norm{q}_{Z}
\le (\norm{\mathcal T_\psi}_{V'} + \norm{\Phi_f}_{Z'}) \norm{(\vv,q)}_{\mathcal W}. 
$$
Thus, $\mathcal F_{\psi,f}\in \mathcal W'$ and 
we can rewrite \eqref{ahatint} as\beq\label{VQform}
\mathcal A(\uu,p)=\mathcal F_{\psi,f}.
\eeq

Now, suppose $(\uu,p)\in \mathcal W$ is a solution of \eqref{VQform}.
By taking $q=0$ and $\vv\in V$ arbitrary in \eqref{ahatint}, we obtain the first equation of \eqref{equivform}.
By taking $\vv=0$ and $q\in Z$ arbitrary in \eqref{ahatint}, we obtain the second equation of \eqref{equivform}.
Therefore, $(\uu,p)$ is a solution of \eqref{equivform}.
We have proved that
\beq\label{equeq}
   \emph{ problems \eqref{equivform} and \eqref{VQform} are equivalent.}
\eeq 

Equation \eqref{VQform}, in turn, is a special case of a more general class of equations
\beq\label{VQgen}
\mathcal A(\uu,p)=\mathcal F,\quad \mathcal F\in\mathcal W'.
\eeq
We will study \eqref{VQgen} and apply the findings to \eqref{VQform}.

\begin{definition}
    For $\mathcal F\in \mathcal W'$, define $\pi_1\mathcal F:V\to \R$ and $\pi_2\mathcal F:Z\to\R$ by
    $$(\pi_1\mathcal F)(\vv)=\mathcal F(\vv,0)\text{ for $\vv\in V$ and } 
    (\pi_2\mathcal F)(q)=\mathcal F(0,q) \text{ for $q\in Z$.}$$
\end{definition}

Then  $\pi_1:\mathcal W'\to  V'$ and $\pi_2:\mathcal W' \to Z'$ are linear,  and
\beq \label{piVZ}
\|\pi_1\mathcal F\|_{V'},\|\pi_2\mathcal F\|_{Z'}\le \|\mathcal F\|_{\mathcal W'}.
\eeq 

One has, for all $(\vv,q)\in\mathcal W$, 
$\mathcal F(\vv,q)=(\pi_1\mathcal F)(\vv)+(\pi_2\mathcal F)(q),$ which implies
\beq\label{mFuv} |\mathcal F(\vv,q)|\le \|\pi_1\mathcal F\|_{V'}\|\vv\|_V+\|\pi_2\mathcal F\|_{Z'}\|q\|_Z,
\eeq
and, consequently,
$  \|\mathcal F\|_{\mathcal W'}\le \|\pi_1\mathcal F\|_{V'}+\|\pi_2\mathcal F\|_{Z'}$. 

Similar to \eqref{equeq}, we have the following equivalence.

\begin{lemma}\label{genequiv}
Equation \eqref{VQgen} is equivalent to
\beq\label{genproj}
\left\{
\begin{aligned}
a(\uu, \vv) - b(\vv,p) 
&=(\pi_1\mathcal F)(\vv)  &&  \text{ for all } \vv\in V,\\
b(\uu,q)
&=(\pi_2 \mathcal F)(q) &&  \text{ for all } q\in Z.
\end{aligned}
\right.
\eeq
\end{lemma}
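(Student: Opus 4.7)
The plan is to prove the equivalence by decomposing an arbitrary test pair $(\vv,q)\in\mathcal W$ as the sum $(\vv,0)+(0,q)$, and exploiting that every linear/bilinear object in sight is linear in the test variable. Specifically, the map $(\vv,q)\mapsto \widehat a((\uu,p),(\vv,q))$ is linear because $a(\uu,\cdot)$ is linear (the second argument of $a$ enters the integrand $F(\uu)\cdot\vv$ linearly) and $b$ is bilinear, while $\mathcal F$ is linear by hypothesis. Once this is observed, both directions are routine bookkeeping.

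For the forward direction, I would assume $\mathcal A(\uu,p)=\mathcal F$ in $\mathcal W'$, i.e., $\widehat a((\uu,p),(\vv,q))=\mathcal F(\vv,q)$ for all $(\vv,q)\in\mathcal W$. Testing against $(\vv,0)$ for arbitrary $\vv\in V$ and using $b(\uu,0)=0$ (bilinearity of $b$), the left side collapses to $a(\uu,\vv)-b(\vv,p)$, while the right side is $\mathcal F(\vv,0)=(\pi_1\mathcal F)(\vv)$; this is the first equation of \eqref{genproj}. Next, testing against $(0,q)$ for arbitrary $q\in Z$ and using $a(\uu,0)=0$ together with $b(0,p)=0$, the left side reduces to $b(\uu,q)$, which equals $\mathcal F(0,q)=(\pi_2\mathcal F)(q)$; this is the second equation of \eqref{genproj}.

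For the reverse direction, assume \eqref{genproj} holds and fix any $(\vv,q)\in\mathcal W$. Add the two identities (applied to this $\vv$ and this $q$) to obtain
\begin{equation*}
a(\uu,\vv)-b(\vv,p)+b(\uu,q)=(\pi_1\mathcal F)(\vv)+(\pi_2\mathcal F)(q).
\end{equation*}
The left side is precisely $\widehat a((\uu,p),(\vv,q))$ by definition. The right side equals $\mathcal F(\vv,0)+\mathcal F(0,q)=\mathcal F(\vv,q)$, by linearity of $\mathcal F$ and the definitions of $\pi_1,\pi_2$. Since $(\vv,q)$ was arbitrary, $\intb{\mathcal A(\uu,p),(\vv,q)}_{\mathcal W',\mathcal W}=\mathcal F(\vv,q)$ on all of $\mathcal W$, giving \eqref{VQgen}.

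I do not foresee any substantive obstacle: the proof rests entirely on linearity, on the elementary zero-identities $a(\uu,0)=0$, $b(\vv,0)=0$, $b(0,q)=0$, and on the fact that $\mathcal W=V\times Z$ admits the trivial decomposition $(\vv,q)=(\vv,0)+(0,q)$. The only minor point worth flagging is that $a(\uu,0)=0$ must be justified from the definition of $a$ (rather than any bilinearity of $a$), but this is immediate from $\int_\Omega F(\uu)\cdot 0\,\d x=0$.
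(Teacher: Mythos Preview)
Your proposal is correct and follows exactly the approach the paper uses (spelled out for the analogous equivalence \eqref{equeq}): test \eqref{VQgen} against $(\vv,0)$ and $(0,q)$ to extract the two equations of \eqref{genproj}, and conversely add the two equations of \eqref{genproj} and use $\mathcal F(\vv,q)=(\pi_1\mathcal F)(\vv)+(\pi_2\mathcal F)(q)$ to recover \eqref{VQgen}.
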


When $\mathcal F=\mathcal F_{\psi,f}$, we have
\beq\label{ppsf}
\pi_1(\mathcal F_{\psi,f})=-\mathcal T_\psi,\quad \pi_2(\mathcal F_{\psi,f})=\Phi_f,
\eeq
and system \eqref{genproj} becomes \eqref{equivform}.

\medskip
In studying \eqref{VQgen} and \eqref{genproj}, the coercivity of $a(\cdot,\cdot)$ and $\mathcal A$ plays an important role.
For any $\uu \in \mathbf L^s(\Omega)$, using property \eqref{F2s}, we have
\beq\label{a2}
a(\uu,\uu)=\int_\Omega  F(\uu(x))\cdot \uu(x)\d x 
\ge c_2\int_\Omega  |\uu(x)|^s\d x=c_2 \|\uu\|_{0,s}^s.
\eeq 
Thus, $a(\cdot,\cdot)$ is coercive on $\mathbf L^s(\Omega)$. 
For $\mathcal A$ to be coercive, it must satisfy
\beq\label{coerdef}
\lim_{\norm{(\uu,p)}_{\mathcal W}\to \infty} \frac{\intb{\mathcal A(\uu,p), (\uu,p) }_{\mathcal W',\mathcal W}}{\norm{(\uu,p)}_{\mathcal W}}=\infty.
\eeq
However, for any $(\uu,p)\in \mathcal W$, 
\beq\label{Aup}
\intb{\mathcal A(\uu,p), (\uu,p) }_{\mathcal W',\mathcal W}
=\widehat a((\uu,p),(\uu,p))=a(\uu, \uu).
\eeq
Hence, \eqref{coerdef} fails and $\mathcal A$ is not coercive on $\mathcal W$.
To overcome this issue, we need some regularized problem.

\subsection{The regularized problem} \label{subreg}
For $\uu,\vv\in V$ and $p,q\in Z$, define
\beq\label{RRdef}
R_1(\uu,\vv) =   \int_\Omega|\diver \uu|^{s-2} \diver \uu\cdot  \diver \vv\, \d x 
\text{ and }
R_2(p,q)=  \int_\Omega |p|^{r-2}p\cdot  q \, \d x.
\eeq

For $\varep>0$, we consider the following regularized problem of \eqref{VQgen} and \eqref{genproj}: Find $(\uu,p)\in V\times Z$ such that
\beq\label{reg-prob}
\left\{
\begin{aligned}
a(\uu, \vv)+ \varep R_1(\uu,\vv)  - b(\vv,p) 
&=(\pi_1 \mathcal F)(\vv)  &&  \text{ for all } \vv\in V,\\
\varep R_2(p,q) + b(\uu,q)
&=(\pi_2\mathcal F)(q) &&  \text{ for all } q\in Z.
\end{aligned}
\right.
\eeq
Adding the equations in \eqref{reg-prob} yields 
\beq\label{VQreg}
\widehat a_\varep((\uu,p), (\vv,q) )=\mathcal F(\vv,q) \text{ for all }(\vv,q)\in V\times Z.
\eeq 
Here, $\widehat a_\varep$ is defined on $(V\times Z)^2$ by
\beq\label{a-eps}
\begin{split}
\widehat a_\varep((\uu,p), (\vv,q) ) = \widehat a( (\uu, \vv), (\vv,q)  )  + \varep R_1(\uu,\vv) 
+\varep R_2(p,q) \text{ for }(\uu,p),(\vv, q)\in V\times Z.
\end{split}
\eeq

Note, for $\uu,\vv\in V$ and $p,q\in Z$, that
\beq\label{RRineq}
|R_1(\uu,\vv)|\le \norm{\diver \uu}_{0,s}^{s-1}\norm{\diver \vv}_{0,s},\quad 
|R_2(p,q)|\le \norm{p}_{0,r}^{r-1}\norm{q}_{0,r},
\eeq
and
\begin{align*}
&|\widehat a_\varep((\uu,p), (\vv,q) )|
\le \norm{\mathcal A(\uu,p)}_{\mathcal W'}\norm{(\vv,q)}_{\mathcal W}
+ \varep \norm{\diver \uu}_{0,s}^{s-1} \norm{\diver \vv}_{0,s}
+ \varep \norm{p}_{0,r}^{r-1} \norm{q}_{0,r}\\
&\le \left (\norm{\mathcal A(\uu,p)}_{\mathcal W'}+\varep\norm{(\uu,p)}_{\mathcal W}^{s-1}+\varep\norm{(\uu,p)}_{\mathcal W}^{r-1}\right ) \cdot \norm{(\vv,q)}_{\mathcal W}.
\end{align*}
Then, similar to the definition of $\mathcal A$, there is a unique mapping $\mathcal A_\varep: \mathcal W\to \mathcal W'$ so that 
\beqs
\intb{\mathcal A_\varep(\uu,p), (\vv,q) }_{\mathcal W',\mathcal W} =\widehat  a_\varep((\uu,p), (\vv,q)) \text{ for all } (\uu,p), (\vv,q)\in \mathcal W. 
\eeqs

Similar to Lemma \ref{genequiv}, we have the following.

\begin{lemma}\label{epseqv} Problem \eqref{reg-prob} is equivalent to 
\beq \label{dualeq}
\mathcal A_\varep (\uu, p) = \mathcal F.
\eeq
\end{lemma}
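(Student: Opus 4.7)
The plan is to mimic the proof of Lemma \ref{genequiv}, since \eqref{reg-prob}--\eqref{dualeq} are obtained from \eqref{genproj}--\eqref{VQgen} simply by adding the regularizing terms $\varep R_1(\uu,\vv)$ and $\varep R_2(p,q)$, which are bilinear on $V\times V$ and $Z\times Z$ respectively, and produce $0$ whenever one of their arguments is $0$.

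For the forward direction, I would assume $(\uu,p)\in V\times Z$ solves \eqref{reg-prob} and simply add the two equations. This gives exactly \eqref{VQreg} on $(\vv,q)=(\vv,0)+(0,q)$ with $\vv\in V$ and $q\in Z$ arbitrary. By the very definition of $\mathcal A_\varep$ through the bilinear-type form $\widehat a_\varep$, equation \eqref{VQreg} says
\beqs
\intb{\mathcal A_\varep(\uu,p),(\vv,q)}_{\mathcal W',\mathcal W}=\mathcal F(\vv,q)\text{ for all }(\vv,q)\in\mathcal W,
\eeqs
which is \eqref{dualeq}.

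For the reverse direction, I would start from \eqref{dualeq} and test against elements of $\mathcal W$ of the special forms $(\vv,0)$ and $(0,q)$. Using the definition \eqref{a-eps} of $\widehat a_\varep$ together with the obvious identities $R_1(\uu,0)=0$, $R_2(p,0)=0$, $b(\vv,0)=0$, $b(0,q)=0$, and recalling that $\pi_1\mathcal F(\vv)=\mathcal F(\vv,0)$ and $\pi_2\mathcal F(q)=\mathcal F(0,q)$, testing with $(\vv,0)$ recovers the first equation of \eqref{reg-prob}, and testing with $(0,q)$ recovers the second. Since $\vv\in V$ and $q\in Z$ are arbitrary, this completes the equivalence.

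There is no real obstacle here: the argument is a purely formal unpacking of the definitions, exactly parallel to Lemma \ref{genequiv}. The only point that deserves a brief verification is the linearity/bilinearity needed to split $\mathcal F(\vv,q)=\mathcal F(\vv,0)+\mathcal F(0,q)$ and to isolate the contributions of $\varep R_1$ and $\varep R_2$ when one of the test arguments vanishes; both follow immediately from the definitions of $R_1$, $R_2$ in \eqref{RRdef} and of $\mathcal F\in\mathcal W'$.
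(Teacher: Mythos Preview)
Your proposal is correct and follows exactly the approach the paper itself indicates: the paper gives no separate proof for this lemma, merely prefacing it with ``Similar to Lemma \ref{genequiv}, we have the following,'' and your unpacking of that analogy is accurate. One small terminological slip: $R_1$ and $R_2$ are \emph{not} bilinear (they are nonlinear in the first argument, since they involve $|\diver\uu|^{s-2}\diver\uu$ and $|p|^{r-2}p$), but your argument only uses linearity in the second argument and the vanishing $R_1(\uu,0)=R_2(p,0)=0$, both of which hold, so the proof goes through unchanged.
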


Together with \eqref{a1} and \eqref{a2}, the following are the basic properties of $a(\cdot,\cdot)$.
\begin{lemma}\label{alem}
For any $\uu,\vv,\ww \in \mathbf L^s(\Omega)$, one has 
\begin{align} 
|a(\uu,\ww)-a(\vv,\ww) |
&\le c_6 \left(1+\norm{\uu}_{0,s}^{s-2}+ \norm{\vv}_{0,s}^{s-2}\right)\norm{\uu-\vv}_{0,s}\norm{\ww}_{0,s}, \label{a3}
\\
\label{a4} 
a(\uu,\uu-\vv)-a(\vv,\uu-\vv) 
&\ge c_2 \norm{\uu-\vv}_{0,s}^s,
\end{align}
where $c_6$ is a positive constant, and $c_2$ is from Assumption \ref{assuF}. 
\end{lemma}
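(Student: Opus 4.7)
Both inequalities follow from the pointwise estimates on $F$ in Assumption \ref{assuF}, integrated against a suitable H\"older decomposition. The plan is to write
\beqs
a(\uu,\ww)-a(\vv,\ww)=\int_\Omega (F(\uu(x))-F(\vv(x)))\cdot \ww(x)\,\d x,
\eeqs
and
\beqs
a(\uu,\uu-\vv)-a(\vv,\uu-\vv)=\int_\Omega (F(\uu(x))-F(\vv(x)))\cdot (\uu(x)-\vv(x))\,\d x,
\eeqs
so that everything reduces to integrating the pointwise bounds \eqref{F3a} and \eqref{F4a}.

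For \eqref{a3}, I would apply \eqref{F3a} pointwise to get
\beqs
|F(\uu(x))-F(\vv(x))|\,|\ww(x)|\le c_1\big(1+|\uu(x)|^{s-2}+|\vv(x)|^{s-2}\big)|\uu(x)-\vv(x)|\,|\ww(x)|.
\eeqs
I would then apply the generalized H\"older inequality with exponents $\frac{s}{s-2},s,s$ (these are conjugate since $\frac{s-2}{s}+\frac{1}{s}+\frac{1}{s}=1$) to the three factors on the right. This produces
\beqs
\|1+|\uu|^{s-2}+|\vv|^{s-2}\|_{L^{s/(s-2)}(\Omega)}\cdot\|\uu-\vv\|_{0,s}\cdot\|\ww\|_{0,s},
\eeqs
and the triangle inequality together with $\||\uu|^{s-2}\|_{L^{s/(s-2)}}=\|\uu\|_{0,s}^{s-2}$ yields
\beqs
\|1+|\uu|^{s-2}+|\vv|^{s-2}\|_{L^{s/(s-2)}(\Omega)}\le |\Omega|^{(s-2)/s}+\|\uu\|_{0,s}^{s-2}+\|\vv\|_{0,s}^{s-2}.
\eeqs
Absorbing $|\Omega|^{(s-2)/s}$ into the constant gives \eqref{a3} with $c_6=c_1\max\{1,|\Omega|^{(s-2)/s}\}$.

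For \eqref{a4}, the argument is even more direct: the pointwise lower bound \eqref{F4a} gives
\beqs
(F(\uu(x))-F(\vv(x)))\cdot (\uu(x)-\vv(x))\ge c_2|\uu(x)-\vv(x)|^s \text{ a.e.~}x\in\Omega,
\eeqs
and integrating over $\Omega$ yields exactly $c_2\|\uu-\vv\|_{0,s}^s$.

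There is essentially no obstacle here; the only care needed is the correct choice of H\"older triple $(s/(s-2),s,s)$ in the proof of \eqref{a3}, which is natural once one notices the scaling of the right-hand side of \eqref{F3a}.
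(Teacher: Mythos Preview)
Your proof is correct and follows essentially the same approach as the paper's: both arguments integrate the pointwise bounds \eqref{F3a} and \eqref{F4a}, apply the H\"older inequality with exponents $s/(s-2),s,s$ for \eqref{a3}, and even arrive at the identical constant $c_6=c_1\max\{1,|\Omega|^{(s-2)/s}\}$.
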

\begin{proof}
Using property  \eqref{F3a}, we have 
\begin{align*}
|a(\uu,\ww)-a(\vv,\ww) |
&\le \int_\Omega |F(\uu(x))-F(\vv(x))|\, |\ww(x)|\d x
%\\
%&
\le c_1\int_\Omega   (1+|\uu|^{s-2}+|\vv|^{s-2})|\uu-\vv||\ww|\d x .
\end{align*}
Applying H\"older's inequality to three functions and powers $s/(s-2),s,s$ gives
\begin{align*}
|a(\uu,\ww)-a(\vv,\ww) |
\le c_1 \left(\norm{1}_{0,s}^{s-2}+\norm{\uu}_{0,s}^{s-2}+ \norm{\vv}_{0,s}^{s-2}\right)\norm{\uu-\vv}_{0,s}\norm{\ww}_{0,s}.
\end{align*}
Thus, we obtain \eqref{a3} with $c_6=c_1\max\{1,|\Omega|^{\frac{s-2}s}\}$.
By \eqref{F4a}, we have
\begin{align*}
    a(\uu,\uu-\vv)-a(\vv,\uu-\vv
    )=  \int_\Omega (F(\uu)-F(\vv))\cdot (\uu-\vv)\d x
\ge c_2\int_\Omega |\uu-\vv|^s\d x,
\end{align*}
which proves \eqref{a4}.
\end{proof}

%=========
\begin{proposition}\label{StatSol}
For every $\varep>0$ and $\mathcal F\in\mathcal W'$, there exists a unique solution $(\m_\varep, p_\varep)\in V\times Z$ of the regularized  problem \eqref{dualeq}.  
\end{proposition}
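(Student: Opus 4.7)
\medskip
\noindent\textbf{Proof plan for Proposition \ref{StatSol}.}
The plan is to apply the Browder--Minty surjectivity theorem to the nonlinear operator $\mathcal A_\varep:\mathcal W\to \mathcal W'$ on the reflexive Banach space $\mathcal W=V\times Z$. Thanks to Lemma \ref{epseqv}, the unique solvability of \eqref{reg-prob} (equivalently \eqref{dualeq}) is equivalent to surjectivity together with injectivity of $\mathcal A_\varep$. I would verify the three standard hypotheses: monotonicity, hemicontinuity, and coercivity.

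\medskip
\noindent\emph{Monotonicity.} Take $(\uu_1,p_1),(\uu_2,p_2)\in\mathcal W$ and set $(\ww,\pi)=(\uu_1-\uu_2,p_1-p_2)$. Because the $b$-terms in $\widehat a_\varep$ cancel, one has
\begin{align*}
\intb{\mathcal A_\varep(\uu_1,p_1)-\mathcal A_\varep(\uu_2,p_2),(\ww,\pi)}_{\mathcal W',\mathcal W}
&=\bigl[a(\uu_1,\ww)-a(\uu_2,\ww)\bigr]\\
&\quad+\varep\bigl[R_1(\uu_1,\ww)-R_1(\uu_2,\ww)\bigr]+\varep\bigl[R_2(p_1,\pi)-R_2(p_2,\pi)\bigr].
\end{align*}
By \eqref{a4} the first bracket is bounded below by $c_2\|\ww\|_{0,s}^s$. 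For $R_1$, apply the pointwise inequality \eqref{inq3} with exponent $p=s-2>0$ and integrate to get a bound from below by a positive multiple of $\|\diver\ww\|_{0,s}^s$. For $R_2$, the exponent is $r-2\in(-1,0)$, so instead use the pointwise inequality \eqref{inq4}, which gives a nonnegative integrand. This yields monotonicity, and in fact strict monotonicity: if $(\uu_1,p_1)\neq(\uu_2,p_2)$ then either $\ww\neq 0$ (so the $a$-term is strictly positive) or $\pi\neq 0$ (so the $R_2$ integrand is strictly positive on a set of positive measure). Strict monotonicity gives uniqueness.

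\medskip
\noindent\emph{Hemicontinuity.} For fixed $(\uu,p),(\ww,\pi),(\vv,q)\in\mathcal W$, I would show that
$$t\mapsto \intb{\mathcal A_\varep((\uu,p)+t(\ww,\pi)),(\vv,q)}_{\mathcal W',\mathcal W}$$
is continuous on $\R$. The $b$-terms are linear in $t$, hence trivially continuous. For $a$, use the local Lipschitz estimate \eqref{a3}, which together with dominated convergence yields continuity. For the $R_i$ terms, the integrands $|\diver(\uu+t\ww)|^{s-2}\diver(\uu+t\ww)\cdot\diver\vv$ and $|p+t\pi|^{r-2}(p+t\pi)q$ are continuous in $t$ pointwise, and a uniform $L^1$ majorant on compact $t$-intervals follows from \eqref{inq1}/\eqref{inq2} and H\"older's inequality.

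\medskip
\noindent\emph{Coercivity.} Using cancellation of the $b$-terms and \eqref{a2},
\begin{equation*}
\intb{\mathcal A_\varep(\uu,p),(\uu,p)}_{\mathcal W',\mathcal W}
= a(\uu,\uu)+\varep\|\diver\uu\|_{0,s}^s+\varep\|p\|_{0,r}^r
\ge c_2\|\uu\|_{0,s}^s+\varep\|\diver\uu\|_{0,s}^s+\varep\|p\|_{0,r}^r.
\end{equation*}
Combined with \eqref{Wadnorm}, this gives
\begin{equation*}
\intb{\mathcal A_\varep(\uu,p),(\uu,p)}_{\mathcal W',\mathcal W}\ge C_\varep\bigl(\|\uu\|_V^s+\|p\|_Z^r\bigr)
\end{equation*}
for some $C_\varep>0$. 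Since $s>2>1$ and $r>1$, dividing by $\|(\uu,p)\|_{\mathcal W}=\|\uu\|_V+\|p\|_Z$ and letting the norm tend to infinity establishes \eqref{coerdef}. Browder--Minty then yields the existence of $(\m_\varep,p_\varep)\in\mathcal W$ with $\mathcal A_\varep(\m_\varep,p_\varep)=\mathcal F$, and strict monotonicity gives uniqueness.

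\medskip
\noindent\emph{Main obstacle.} The only subtle point is the $R_2$-contribution to monotonicity: since $r\in(1,2)$, the exponent $r-2$ is negative and the usual $p+2$-monotonicity estimate \eqref{inq3} does not apply. One must use the degenerate lower bound \eqref{inq4}, which yields only a weighted $L^2$-type quantity rather than a clean $\|p_1-p_2\|_{0,r}^r$ bound. Fortunately this is enough for (strict) monotonicity, which is all that is needed here; the full control of $\|p_1-p_2\|_{0,r}$ will come later from the inf-sup estimate of Lemma \ref{dualnorm}.
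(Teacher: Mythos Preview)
Your proposal is correct and follows essentially the same approach as the paper: both apply the Browder--Minty theorem on the reflexive space $\mathcal W$ by verifying strict monotonicity (via \eqref{a4}, \eqref{inq3} for $R_1$, and \eqref{inq4} for the degenerate $R_2$), coercivity (via \eqref{a2} and the regularizing terms), and a continuity hypothesis. The only cosmetic difference is that the paper checks full continuity of $\mathcal A_\varep$ while you check hemicontinuity; either suffices for Browder--Minty.
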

%=========
\begin{proof}
%Thanks to Lemma \ref{epseqv}, we will solve equation \eqref{dualeq} instead.
Below, we establish that  
$\mathcal A_\varep$ is continuous, coercive and strictly monotone.

\medskip\noindent
\emph{Proof of the fact $\mathcal A_\varep$ is continuous.} For any $(\uu_1,p_1),(\uu_2,p_2),(\vv,q)\in V\times Z$, we have
\beq\label{est0}
\begin{aligned}
J&\eqdef \intb{\mathcal A_\varep(\uu_1,p_1)-\mathcal A_\varep(\uu_2,p_2), (\vv,q) }_{\mathcal W',\mathcal W}\\
&=J_1+\varep J_2+\varep J_3- b(\vv,p_1-p_2) +b(\uu_1-\uu_2,q),
\end{aligned}
\eeq
where
\begin{align*}
    J_1&=a(\uu_1,\vv)-a(\uu_2,\vv) ,\\
J_2&=R_1(\uu_1,\vv)-R_1(\uu_2,\vv)=\int_\Omega (|\diver \uu_1|^{ s-2} \diver \uu_1-|\diver \uu_2|^{ s-2} \diver \uu_2)\diver\vv \, \d x,\\
J_3&=R_2(p_1,q)-R_2(p_2,q)=\int_\Omega (|p_1|^{r-2}p_1-|p_2|^{r-2}p_2)q\, \d x.
\end{align*}

By  \eqref{a3}, 
\begin{align*}
|J_1|
&\le c_6 \left(1+\norm{\uu_1}_{0,s}^{s-2}+ \norm{\uu_2}_{0,s}^{s-2}\right)\norm{\uu_1-\uu_2}_{0,s}\norm{\vv}_{0,s}\\
&\le c_6 \left(1+\norm{\uu_1}_V^{s-2}+ \norm{\uu_2}_V^{s-2}\right)\norm{\uu_1-\uu_2}_V\norm{\vv}_V.
\end{align*}

By \eqref{inq1} and H\"older's inequality we find that   
\beqs
\begin{split}
|J_2|
%&\le \int_\Omega \Big ||\diver \uu_1|^{ s-2} \diver \uu_1-|\diver \uu_2|^{ s-2} \diver \uu_2\Big| |\diver\vv| \d x\\
&\le 2^{s-2}\int_\Omega (|\diver \uu_1|^{ s-2}+|\diver \uu_2|^{ s-2})\cdot |\diver (\uu_1- \uu_2)|\cdot |\diver\vv| \d x\\
&\le 2^{s-2}\big(\norm{\diver \uu_1}_{0,s}^{s-2}+\norm{\diver \uu_2}_{0,s}^{s-2}\big) \norm{\diver (\uu_1- \uu_2)}_{0,s}\norm{\diver\vv}_{0,s}\\
&\le 2^{s-2}\big(\norm{\uu_1}_V^{s-2}+\norm{\uu_2}_V^{s-2}\big) \norm{\uu_1- \uu_2}_V\norm{\vv}_V.
\end{split}
\eeqs

By \eqref{inq2}, using H\"older's inequality, and noting that $r-2\in(-1,0)$, we have
\begin{align*}
|J_3|&\le \int_\Omega \left | |p_1|^{r-2}p_1-|p_2|^{r-2}p_2\right | \cdot|q| \d x
\le 2^{2-r}\int_\Omega |p_1-p_2|^{r-1} |q| \d x \le C\norm{p_1-p_2}_{0,r}^{r-1}\norm{q}_{0,r}\\
&\le 2^{2-r}\norm{p_1-p_2}_Z^{r-1}\norm{q}_Z.
\end{align*}

By  \eqref{b1}, 
\begin{align*}
|b(\vv,p_1-p_2)| +|b(\uu_1-\uu_2,q)| 
&\le \norm{\diver\vv}_{0,s}\norm{p_1-p_2}_{0,r}+ \norm{\diver(\uu_1-\uu_2)}_{0,s}\norm{q}_{0,r}\\
&\le \norm{p_1-p_2}_Z\norm{\vv}_V+ \norm{\uu_1-\uu_2}_V\norm{q}_Z.
\end{align*}

From \eqref{est0} and the above estimates, it follows that
\begin{align*}
|J|
&\le  C_1(1+\varep)\big( 1+\norm{\uu_1}_V^{s-2}+\norm{\uu_2}_V^{s-2}\big)\\
&\quad  \times \big(\norm{\uu_1-\uu_2}_{V}+\norm{p_1-p_2}_Z +\norm{p_1-p_2}_Z^{r-1} \big)\big( \norm{\vv}_{V}+ \norm{q}_Z\big),
 \end{align*}
where $C_1=c_6+2^{s-2}+2^{2-r}+1$.
This yields 
\begin{align*}
\norm{\mathcal A_\varep(\uu_1,p_1)-\mathcal A_\varep(\uu_2,p_2)}_{\mathcal W'}
&\le C_1(1+\varep) \big( 1+\norm{\uu_1}_V^{s-2}+\norm{\uu_2}_V^{s-2}\big)\\
&\quad \times \big(\norm{\uu_1-\uu_2}_{V}+\norm{p_1-p_2}_Z +\norm{p_1-p_2}_Z^{r-1} \big).
\end{align*}
Thus, $\mathcal A_\varep$ is continuous.

\medskip\noindent
\emph{Proof of the fact $\mathcal A_\varep$ is coercive.} 
We need to prove \eqref{coerdef} for $\mathcal A_\varep$ replacing $\mathcal A$.
% \beq\label{coer}
% \lim_{\norm{(\uu,p)}_{\mathcal W}\to +\infty}  \frac{\intb{\mathcal A_\varep(\uu,p), (\uu,p) }_{\mathcal W',\mathcal W}}{\norm{(\uu,p)}_{\mathcal W}} = +\infty.
% \eeq
For any $(\uu,p)\in \mathcal W$, we have from \eqref{a2}, \eqref{Aup}, \eqref{RRdef} and \eqref{a-eps},  that
\beqs
\begin{split}
\intb{\mathcal A_\varep(\uu,p), (\uu,p) }_{\mathcal W',\mathcal W}
&=\widehat a((\uu,p),(\uu,p))+ \varep R_1(\uu,\uu) + \varep R_2(p,p)
\ge c_2 \norm{\uu}_{0,s}^{s}+\varep  \norm{\diver\uu}_{0,s}^{ s}  +\varep  \norm{p}_{0,r}^{r}\\
&\ge\min\{c_2,\varep\}\big(  \norm{\uu}_V^{ s}  +\norm{p}_Z^{ r}\big).
\end{split}
\eeqs  

Note that $s> 2 >r>1$. We consider $\norm{\uu}_V+\norm{p}_Z\ge 2$. 
If $\norm{\uu}_V \ge 1$ then
\beqs
\norm{\uu}_V^{ s}  +\norm{p}_Z^{r}
\ge \norm{\uu}_V^r  +\norm{p}_Z^r\ge  2^{1-r} (\norm{\uu}_V+ \norm{p}_Z)^r.
\eeqs
If $\norm{\uu}_V < 1$, then $\norm{p}_Z>1>\norm{\uu}_V$, and 
\beqs
\norm{\uu}_V^{ s}  +\norm{p}_Z^{r}
\ge \norm{p}_Z^r
\ge  \left ( \frac12 \norm{p}_{Z} + \frac12\norm{\uu}_V\right )^r=2^{-r} (\norm{\uu}_V+ \norm{p}_Z)^r.
\eeqs
In both cases, we find that 
\beqs
\frac{\intb{\mathcal A_\varep(\uu,p), (\uu,p) }_{\mathcal W',\mathcal W}}{\norm{(\uu,p)}_{\mathcal W} }
\ge 2^{-r}\min\{c_2,\varep\} \norm{(\uu,p)}_{\mathcal W}^{r-1}\to \infty
\text{ as $\norm{(\uu,p)}_{\mathcal W} \to  \infty$.}
\eeqs
Therefore,  $\mathcal A_\varep$ is coercive.

\medskip\noindent
\emph{Proof of the fact $\mathcal A_\varep$ is strictly monotone.} 
By ``strictly monotonotone", we mean that
$$\intb{\mathcal A_\varep (\uu,p)-\mathcal A_\varep(\vv,q), (\uu,p)-(\vv,q) }_{\mathcal W',\mathcal W}>0$$
for all $(\uu,p),(\vv, q)\in \mathcal W$ with $(\uu,p)\neq(\vv, q)$. 

Let $(\uu,p),(\vv,q)\in \mathcal W$. We have 
\beqs
\begin{split}
I&\eqdef \intb{\mathcal A_\varep (\uu,p)-\mathcal A_\varep(\vv,q), (\uu-\vv,p-q) }_{\mathcal W',\mathcal W}\\
&=a(\uu,\uu-\vv)-a(\vv,\uu-\vv)
+ \varep(R_1(\uu,\uu-\vv)-R_1(\vv,\uu- \vv))+ \varep (R_2(p,p-q)-R_2(q,p-q)). 
\end{split}
\eeqs

 By \eqref{a4},  we have 
 \beqs
 a(\uu,\uu-\vv)-a(\uu,\uu-\vv)
\ge c_2\norm{\uu-\vv}_{0,s}^s. 
 \eeqs   

  By \eqref{inq3}, we have 
 \begin{align*} 
R_1(\uu,\uu-\vv)-R_1(\vv,\uu- \vv)
&= \int_\Omega(|\diver \uu|^{s-2} \diver \uu-|\diver \vv|^{s-2} \diver \vv)\cdot(\diver \uu-  \diver \vv) \, \d x \\
&\ge  2^{1-s}\norm{\diver (\uu-\vv)}_{0,s}^{s}. 
 \end{align*}  
 
Utilizing inequality \eqref{inq4}, we have
\begin{align*} 
R_2(p,p-q)-R_2(q,p-q)
=\int_\Omega (|p|^{r-2}p-|q|^{r-2}q)\cdot(p-q) \, \d x
\ge (r-1)\int_\Omega  (|p|+|q|)^{r-2} |p-q|^2 \, \d x. 
\end{align*}

Thus, combining these estimates yields
\beqs 
I
\ge C_2\left( \norm{\uu-\vv}_{0,s}^s + \varep\norm{\diver (\uu-\vv)}_{0,s}^{ s}+\varep\int_\Omega  (|p|+|q|)^{r-2} |p-q|^2 \, \d x\right),
\eeqs 
where $C_2=\min\{c_2,2^{1-s},r-1\}$.
This implies  that $I$ is positive  whenever $(\uu,p)\neq(\vv, q)$.   
Therefore, $\mathcal A_\varep$ is strictly monotone.

\medskip
By the Browder--Minty Theorem \cite[Theorem 26.A]{ZeidlerIIB}, there exists a unique solution $(\uu_\varep,p_\varep)\in \mathcal W$ of equation \eqref{dualeq}. 
\end{proof}

Next, we obtain a uniform upper bound for solutions $(\uu_\varep, p_\varep)$ of \eqref{dualeq} when $\varep$ is  sufficiently small.

%=====
\begin{lemma}\label{stationary-sol-indep-eps}
There exists $\varep_0>0$  such that if $\varep\in(0,\varep_0)$, $\mathcal F\in\mathcal W'$,  and  $(\uu_\varep, p_\varep)$ is the unique solution of \eqref{dualeq}, then 
\beq\label{uQmV}
\norm{(\uu_\varep,p_\varep)}_{\mathcal W}\le \mathcal C,
\eeq
where number $\mathcal C\ge 0$ is independent of $\varep$ and depends increasingly on $\norm{\mathcal F}_{\mathcal W'}$.
\end{lemma}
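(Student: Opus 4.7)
The plan is to extract three estimates that, when chained together, close on themselves and yield a bound depending only on $\|\mathcal F\|_{\mathcal W'}$, provided $\varep$ is small enough. Write $X=\|\uu_\varep\|_{0,s}$, $Y=\|\diver\uu_\varep\|_{0,s}$, $P=\|p_\varep\|_{0,r}$, and $F_1=\|\pi_1\mathcal F\|_{V'}$, $F_2=\|\pi_2\mathcal F\|_{Z'}$; recall that $F_1,F_2\le\|\mathcal F\|_{\mathcal W'}$ by \eqref{piVZ}. First, test the two equations in \eqref{reg-prob} with $\vv=\uu_\varep$ and $q=p_\varep$ and add them; the off-diagonal $b$-terms cancel and \eqref{a2} yields the energy identity
\begin{equation*}
c_2 X^s + \varep Y^s + \varep P^r \;\le\; F_1(X+Y) + F_2 P.
\end{equation*}

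Next, extract two auxiliary bounds that do not lose powers of $\varep^{-1}$. The second equation of \eqref{reg-prob} rewrites as $\int_\Omega (\diver\uu_\varep)q\,\d x = (\pi_2\mathcal F)(q)-\varep R_2(p_\varep,q)$, so taking the $L^r$-dual sup in $q$ and using \eqref{RRineq} gives $Y\le F_2+\varep P^{r-1}$. For $P$, apply the inf-sup estimate of Lemma \ref{dualnorm} to $p_\varep$ and substitute the first equation of \eqref{reg-prob} to express $b(\vv,p_\varep)=a(\uu_\varep,\vv)+\varep R_1(\uu_\varep,\vv)-(\pi_1\mathcal F)(\vv)$; with \eqref{a1} and \eqref{RRineq} this produces
\begin{equation*}
P \;\le\; C_*\bigl[\,C_0(X+X^{s-1}) + \varep Y^{s-1} + F_1\,\bigr],
\end{equation*}
for a constant $C_0$ depending only on $c_1$ and $|\Omega|$.

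Now the crucial observation: by $1/r+1/s=1$ one has $(r-1)(s-1)=1$, so raising the $Y$-bound to the $(s-1)$-st power and using \eqref{eleori} give $\varep Y^{s-1}\le 2^{s-2}\bigl(\varep F_2^{s-1}+\varep^s P\bigr)$. Plugging this into the $P$-estimate, one obtains
\begin{equation*}
P \;\le\; C_*\bigl[\,C_0(X+X^{s-1}) + 2^{s-2}\varep F_2^{s-1} + 2^{s-2}\varep^s P + F_1\,\bigr].
\end{equation*}
Choose $\varep_0\in(0,1)$ so that $2^{s-1}C_*\varep_0^s\le 1$; then for $\varep\in(0,\varep_0)$ the term $C_*2^{s-2}\varep^s P$ is absorbed into the left, and $P$ is controlled by $X$, $X^{s-1}$, $F_1$, and $F_2^{s-1}$ with constants independent of $\varep$. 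The same $\varep_0$ also makes $Y\le F_2+\varep P^{r-1}$ a usable bound.

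Finally, insert these two estimates into the energy identity. The right-hand side becomes a sum of terms of the form $F_i^\alpha X^\beta$ with $\beta<s$, so repeated applications of Young's inequality $F_i^\alpha X^\beta\le \eta X^s+C_\eta F_i^{\alpha s/(s-\beta)}$ with $\eta$ small absorb every power of $X$ into $c_2 X^s/2$; similarly, $F_1 Y\le F_1 F_2+\varep F_1 P^{r-1}$ is controlled by splitting the second piece by Young's inequality with exponents $(s,r)$ and using $(r-1)\cdot s=r$ together with $\varep^{s-1}\le \varep$ (valid for $\varep<1$, $s>2$) to absorb it into $\varep P^r/2$. What remains is a bound of the form $X^s\le \Psi(F_1,F_2)$ with $\Psi$ a polynomial depending only on the forcing; then $P$ and $Y$ inherit analogous bounds, and \eqref{uQmV} follows with $\mathcal C=\mathcal C(\|\mathcal F\|_{\mathcal W'})$ monotone in the forcing norm.

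\paragraph{Main obstacle.} The delicate point is not the overall strategy but the bookkeeping of the $\varep$-powers: both the divergence bound and the inf-sup bound feed back on each other through the regularization, and only the arithmetic identity $(r-1)(s-1)=1$ makes the feedback loop contract at order $\varep^s$. Choosing $\varep_0$ correctly so that every regularization-generated term is either multiplied by a positive power of $\varep$ or absorbable into the coercive $c_2 X^s$ is where all the care is needed; once this is done, the rest is a sequence of Young's inequalities.
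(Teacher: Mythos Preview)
Your proposal is correct and follows essentially the same route as the paper: the three ingredients---the energy identity from testing with $(\uu_\varep,p_\varep)$, the divergence bound $Y\le F_2+\varep P^{r-1}$ from the second equation, and the inf-sup bound for $P$ from Lemma~\ref{dualnorm} applied to the first equation---together with the identity $(r-1)(s-1)=1$ to make the feedback close at order $\varep^s$, are exactly the paper's tools, and your choice of $\varep_0$ matches the paper's $\varep_0=\min\{1,(2C_0)^{-1/s}\}$. The only cosmetic difference is that the paper drops the $\varep Y^s+\varep P^r$ terms from the energy inequality and substitutes the already-closed $P$ and $P^{r-1}$ bounds directly into the $X^s$ estimate, whereas you retain $\varep P^r$ on the left to absorb the stray $\varep F_1 P^{r-1}$; both organizations work.
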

%=====
\begin{proof}
Thanks to Lemma \ref{epseqv}, $(\uu_\varep, p_\varep)$ is the unique solution of the equivalent system \eqref{reg-prob}. 
Denote $\beta=\norm{\pi_1 \mathcal F}_{V'}$ and $\gamma=\norm{\pi_2 \mathcal F}_{Z'}$.
Let $\varep>0$.
In calculations below, $C$ denotes a positive constant independent of $\varep$, $\beta$, $\gamma$ that may have varied values from one line to another, while $C_i$, for $i=0,1,2,3$, denotes a positive constant independent of $\varep$, $\beta$, $\gamma$ with a fixed value.

Choosing  $q= |\diver \uu_\varep|^{s-2}\diver \uu_\varep\in Z$ in the second equation of \eqref{reg-prob}, noticing that $\norm{q}_Z=\norm{\diver \uu_\varep}_{0,s}^{s- 1}$, and using H\"older's inequality, we find that
\beqs
\norm{\diver \uu_\varep}_{0,s}^{s} 
\le  \gamma \norm{\diver \uu_\varep}_{0,s}^{s- 1} +\varep \norm{ p_\varep}_{0,r}^{r-1}\norm{\diver \uu_\varep}_{0,s}^{s- 1}.
\eeqs
Consequently, 
\beq\label{bound-divm}
\norm{\diver \uu_\varep}_{0,s} \le \gamma  +\varep \norm{ p_\varep}_{0,r}^{r-1}.
\eeq

Taking $(\uu, p)=(\vv, q)=(\uu_\varep,  p_\varep)$ in \eqref{VQreg} and using \eqref{mFuv}, \eqref{Wadnorm}  give
\beq\label{estRHS1}
\begin{aligned}
a(\uu_\varep, \uu_\varep)+\varep \|\diver \uu_\varep\|_{0,s}^s + \varep\|p_\varep\|_{0,r}^r  
=\mathcal F(\uu_\varep,p_\varep)  
\le \beta \big(\norm{\uu_\varep}_{0,s} + \norm{\diver \uu_\varep}_{0,s} \big)+\gamma \norm{p_\varep}_{0,r} .
\end{aligned}
\eeq

Applying inequality \eqref{a1} to the first term of \eqref{estRHS1}, neglecting the next two terms, and utilizing the estimate \eqref{bound-divm} for the last divergence term,  we obtain  
\beqs
c_2\norm{\uu_\varep}_{0,s}^{s}% +\varep\big(\norm{\diver \uu_\varep}_{0,s}^{ s} +\norm{ p_\varep}_Z^{ r}\big)
\le  \beta \big(\norm{\uu_\varep}_{0,s} +\gamma  + \varep \norm{ p_\varep}_{0,r}^{r-1}\big)+\gamma \norm{p_\varep}_{0,r}. 
\eeqs
By Young's inequality, specifically, the last one in \eqref{Yineq}, one has $\beta \norm{\uu_\varep}_{0,s}\le  (c_2/2)\norm{\uu_\varep}_{0,s}^s+C\beta^r$. 
It follows that
\beq\label{bound-mvarep}
\norm{\uu_\varep}_{0,s}^{s}
\le  C\left (\beta^r + \beta \gamma  + \varep \beta \norm{ p_\varep}_{0,r}^{r-1}+\gamma \norm{p_\varep}_{0,r}\right ). 
\eeq

Next, we use \eqref{supinfcdn} to bound $ \norm{p_\varep}_{0,r}$ and close the estimates \eqref{bound-divm} and \eqref{bound-mvarep}.
Using $q=p_\varep$ in the first equation of \eqref{reg-prob} and the estimate of $\norm{ \diver\uu_\varep}_{0,s}$ in \eqref{bound-divm} above, we find that 
\beqs
\begin{split}
b(\vv, p_\varep)
&=a(\uu_\varep,\vv) +\varep R_1( \uu_\varep,\vv)- (\pi_1 \mathcal F)\vv\\
&\le C(\norm{\uu_\varep}_{0,s}+\norm{\uu_\varep}_{0,s}^{s-1} ) \norm{ \vv}_{0,s}+\varep\norm{\diver \uu_\varep}_{0,s}^{s-1}\norm{\diver \vv}_{0,s} +\beta \norm{\vv}_V\\
&\le \left[C\big(\norm{\uu_\varep}_{0,s}+\norm{\uu_\varep}_{0,s}^{s-1} \big)+\varep\big( \gamma +\varep \norm{ p_\varep}_Z^{r-1}\big)^{s - 1} +\beta\right] \norm{\vv}_V.
%&\le C\big(\norm{\uu_\varep}_{0,s}+\norm{\uu_\varep}_{0,s}^{s-1}+\gamma ^{s-1} +\varep^s  \norm{p_\varep}_Z+  \beta\big) \norm{\vv}_V
\end{split}
\eeqs
Applying inequality \eqref{supinfcdn} yields
\beq\label{ppZ}
\norm{ p_\varep}_Z
\le  C_*\sup_{\vv\in V\backslash\{0\}} \frac{b(\vv, p_\varep)}{\norm{\vv}_{V}}
\le C_0\left(\norm{\uu_\varep}_{0,s}+\norm{\uu_\varep}_{0,s}^{s-1}+\varep\gamma ^{s-1} +\varep^s  \norm{p_\varep}_Z+  \beta\right).
\eeq 

Set $\varep_0=\min\{1,(2C_0)^{-1/s}\}$ and consider $\varep\in(0,\varep_0)$.
We obtain  from \eqref{ppZ} that
\beq\label{bound-rhovarep}
\norm{ p_\varep}_Z\le 2C_0\big(\norm{\uu_\varep}_{0,s}+\norm{\uu_\varep}_{0,s}^{s-1} +\varep \gamma ^{s-1}
+  \beta\big). 
\eeq  
Consequently, noting that $(r-1)(s-1)=1$, one has
\beq\label{boundrr}
\norm{ p_\varep}_Z^{r-1}\le C\big(\norm{\uu_\varep}_{0,s}^{r-1}+\norm{\uu_\varep}_{0,s}+\varep ^{r-1}\gamma +  \beta^{r-1}\big). 
\eeq  

Combining \eqref{bound-rhovarep} and \eqref{boundrr} with \eqref{bound-mvarep} leads to 
\begin{align*}
 \norm{\uu_\varep}_{0,s}^{s} 
&\le C\beta^r +C \beta \gamma 
 + C\varep \beta\big(\norm{\uu_\varep}_{0,s}^{r-1}+\norm{\uu_\varep}_{0,s}+\varep ^{r-1}\gamma +  \beta^{r-1}\big)\\
&\quad +C\gamma  \big(\norm{\uu_\varep}_{0,s}+\norm{\uu_\varep}_{0,s}^{s-1} +\varep \gamma ^{s-1}+  \beta\big)\\ 
&\le CJ + C\varep \beta\big(\norm{\uu_\varep}_{0,s}^{r-1}+\norm{\uu_\varep}_{0,s}\big)
+C\gamma  \big(\norm{\uu_\varep}_{0,s}+\norm{\uu_\varep}_{0,s}^{s-1}\big),
\end{align*}
where $J=(1+\varep)\beta^r +(1+\varep^r) \beta \gamma 
+\varep \gamma ^{s}$.

Let $s_*=s/(s-r+1)=s(s-1)/(s(s-1)-1)\in(1,r)$. 
Then using Young's inequality we obtain
\begin{align*}
 \norm{\uu_\varep}_{0,s}^{s} 
&\le CJ+\frac12 \norm{\uu_\varep}_{0,s}^{s}
+ C \big(\varep^{s_*} \beta^{s_*}+\varep^r \beta^r\big)
+C(\gamma^r+ \gamma^{s}\big).
\end{align*}
This implies
\beq \label{mLsbound}
 \norm{\uu_\varep}_{0,s}\le C_1 d_1(\varep,\beta,\gamma),
\eeq 
where 
\beq 
d_1(\varep,\beta,\gamma)
=\Big[ (1+\varep^r)\beta^r+ \varep^{s_*} \beta^{s_*} +(1+\varep^r) \beta \gamma+\gamma^r +(1+\varep) \gamma^{s} \Big ]^{1/s}.
\eeq 

Utilizing estimate \eqref{mLsbound} in \eqref{bound-rhovarep} yields 
\begin{align}\label{ub1}
\norm{ p_\varep}_Z
\le C_2 d_2(\varep,\beta,\gamma),
\text{ where } 
d_2(\varep,\beta,\gamma)=d_1(\varep,\beta,\gamma)+d_1(\varep,\beta,\gamma)^{s-1} +\varep \gamma^{s-1} +  \beta.
\end{align}
Combining \eqref{ub1} and  \eqref{bound-divm}, we have  
\beq\label{divbound}
\norm{\diver \uu_\varep}_{0,s} \le C_3 d_3(\varep,\beta,\gamma),\text{ where } 
d_3(\varep,\beta,\gamma)=\gamma +\varep d_2(\varep,\beta,\gamma)^{r-1}.
\eeq

Note that $d_i(\varep,\beta,\gamma)$, for $i=1,2,3$, are increasing in $\varep,\beta,\gamma$, and, by \eqref{piVZ},  $\beta,\gamma\le \|\mathcal F\|_{\mathcal W'}$.
Summing up the estimates \eqref{mLsbound}, \eqref{ub1} and  \eqref{divbound} gives
\beq
\norm{(\uu_\varep,p_\varep)}_{\mathcal W}
\le \mathcal C \eqdef \sum_{i=1}^3 C_i d_i(1,\norm{\mathcal F}_{\mathcal W'},\norm{\mathcal F}_{\mathcal W'}).
\eeq
Thus, we obtain the desired estimate \eqref{uQmV}.
\end{proof}

\subsection{Existence and uniqueness}\label{subeu}

\begin{theorem}\label{SolofStationaryProb}
For any $\mathcal F\in \mathcal W'$, equation \eqref{VQgen} has a unique solution $(\uu,p)\in \mathcal W$.  
\end{theorem}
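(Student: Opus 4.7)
The plan is to combine the regularized problem of Subsection \ref{subreg} with a weak-limit argument and a Minty-type monotonicity trick. For each $\varepsilon\in(0,\varepsilon_0)$, Proposition \ref{StatSol} gives a unique $(\uu_\varepsilon,p_\varepsilon)\in\mathcal W$ solving \eqref{dualeq}, and Lemma \ref{stationary-sol-indep-eps} provides a uniform bound $\|(\uu_\varepsilon,p_\varepsilon)\|_{\mathcal W}\le\mathcal C$. Since $\mathcal W=V\times Z$ is reflexive, I can extract a subsequence (not relabeled) with $\uu_\varepsilon\rightharpoonup\uu$ in $V$ and $p_\varepsilon\rightharpoonup p$ in $Z$ as $\varepsilon\to 0^+$, for some limit $(\uu,p)\in\mathcal W$. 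Moreover, by the growth bound \eqref{F1s}, the sequence $F(\uu_\varepsilon)$ is bounded in $\mathbf L^r(\Omega)$, so a further extraction yields $F(\uu_\varepsilon)\rightharpoonup\chi$ in $\mathbf L^r(\Omega)$ for some $\chi$. My goal will be to pass to the limit in the equivalent system \eqref{reg-prob} and then identify $\chi=F(\uu)$.

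Passing to the limit in \eqref{reg-prob} is direct for all terms except the nonlinear one. The terms $b(\vv,p_\varepsilon)$ and $b(\uu_\varepsilon,q)$ converge by linearity of $b$ in one slot and weak convergence in the other. The regularization terms vanish: thanks to \eqref{RRineq} and \eqref{uQmV}, $|\varepsilon R_1(\uu_\varepsilon,\vv)|\le\varepsilon\mathcal C^{s-1}\|\diver\vv\|_{0,s}\to 0$, and similarly for $\varepsilon R_2(p_\varepsilon,q)$. Therefore I obtain in the limit
\begin{equation}
\int_\Omega\chi\cdot\vv\,\d x-b(\vv,p)=(\pi_1\mathcal F)(\vv),\qquad b(\uu,q)=(\pi_2\mathcal F)(q),
\end{equation}
for all $(\vv,q)\in\mathcal W$. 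In view of Lemma \ref{genequiv}, once I show $\chi=F(\uu)$ pointwise a.e., these equations are precisely \eqref{genproj}, and $(\uu,p)$ solves \eqref{VQgen}.

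The main obstacle is the identification $\chi=F(\uu)$, which I will carry out by a Minty argument powered by \eqref{F4a}. Testing \eqref{VQreg} with $(\vv,q)=(\uu_\varepsilon,p_\varepsilon)$ and dropping the nonnegative $\varepsilon R_i$ contributions gives $a(\uu_\varepsilon,\uu_\varepsilon)\le\mathcal F(\uu_\varepsilon,p_\varepsilon)$, so $\limsup_{\varepsilon\to 0}\int_\Omega F(\uu_\varepsilon)\cdot\uu_\varepsilon\,\d x\le\mathcal F(\uu,p)=\int_\Omega\chi\cdot\uu\,\d x$, where the last equality uses the just-derived limit system with $\vv=\uu$, $q=p$. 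For an arbitrary $\ww\in\mathbf L^s(\Omega)$, the monotonicity \eqref{F4a} yields
\begin{equation}
0\le\int_\Omega\bigl(F(\uu_\varepsilon)-F(\ww)\bigr)\cdot(\uu_\varepsilon-\ww)\,\d x,
\end{equation}
and passing to the limit (using the $\limsup$ just obtained, weak convergence of $\uu_\varepsilon$, and of $F(\uu_\varepsilon)$ to $\chi$) gives $\int_\Omega(\chi-F(\ww))\cdot(\uu-\ww)\,\d x\ge 0$. Choosing $\ww=\uu-t z$ with $t>0$ and $z\in\mathbf L^s(\Omega)$, dividing by $t$, and sending $t\to 0^+$ (using continuity of $F$ from \eqref{F3a}) produces $\int_\Omega(\chi-F(\uu))\cdot z\,\d x\ge 0$; varying the sign of $z$ forces $\chi=F(\uu)$ a.e.

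For uniqueness, suppose $(\uu_1,p_1)$ and $(\uu_2,p_2)$ both solve \eqref{VQgen}. By Lemma \ref{genequiv}, subtracting the two copies of \eqref{genproj} and testing with $\vv=\uu_1-\uu_2$, $q=p_1-p_2$ eliminates the $b$ contributions and gives $a(\uu_1,\uu_1-\uu_2)-a(\uu_2,\uu_1-\uu_2)=0$; then \eqref{a4} forces $\uu_1=\uu_2$. Substituting back leaves $b(\vv,p_1-p_2)=0$ for all $\vv\in V$, and Lemma \ref{dualnorm} (the inf-sup estimate \eqref{supinfcdn}) yields $p_1=p_2$.
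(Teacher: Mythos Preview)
Your proof is correct and follows the same overall strategy as the paper: solve the regularized problem \eqref{dualeq}, use the uniform bound of Lemma \ref{stationary-sol-indep-eps} to extract a weakly convergent subsequence in the reflexive space $\mathcal W$, and then pass to the limit via a monotonicity argument; the uniqueness part is identical to the paper's.

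The one noteworthy difference is in the limit passage. You introduce the auxiliary weak limit $\chi$ of $F(\uu_\varepsilon)$ in $\mathbf L^r(\Omega)$, pass to the limit term by term in \eqref{reg-prob}, and then identify $\chi=F(\uu)$ by carrying out the Minty trick explicitly. The paper instead observes directly that $\|\mathcal A(\uu_\varepsilon,p_\varepsilon)-\mathcal F\|_{\mathcal W'}\le \varepsilon(\|\uu_\varepsilon\|_V^{s-1}+\|p_\varepsilon\|_Z^{r-1})\to 0$, i.e., \emph{strong} convergence of $\mathcal A(\uu_\varepsilon,p_\varepsilon)$ to $\mathcal F$ in $\mathcal W'$, and then invokes a standard abstract result for monotone hemicontinuous operators (from Zeidler) to conclude $\mathcal A(\uu,p)=\mathcal F$. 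Your route is more self-contained and makes the mechanism transparent; the paper's is shorter and exploits the strong convergence of $\mathcal A(\uu_\varepsilon,p_\varepsilon)$, which renders the $\limsup$ step of the Minty argument automatic.
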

\begin{proof}
Given $\mathcal F\in \mathcal W'$, let $\varep_0$ and $\mathcal C$ be as in Lemma \ref{stationary-sol-indep-eps}.
For $\varep\in(0,\varep_0)$, let $(\uu_\varep,p_\varep)$ be the unique solution of the regularized problem \eqref{dualeq}. 
Note from \eqref{a-eps}, \eqref{VQreg} and \eqref{RRineq} that
\begin{align*}
&\left|\widehat a((\uu_\varep,p_\varep),(\vv,q)) - \mathcal F(\vv,q)\right|
=\varep |R_1(\uu_\varep,\vv)+R_2(p_\varep,q)|\\
&\le \varep \big(\norm{\diver\uu_\varep}_{0,s}^{{ s - 1}}\norm{\diver\vv}_{0,s} + \norm{  p_\varep}_{0,r}^{r-1}\norm{q}_{0,r} \big)
\le \varep \big(\norm{\uu_\varep}_V^{s-1} + \norm{p_\varep}_Z^{r-1} \big)\norm{(\vv,q)}_{\mathcal W} .
\end{align*}
Hence 
\beq\label{Aconv}
\norm{\mathcal A(\uu_\varep,  p_\varep) -\mathcal F}_{\mathcal W'} 
\le \varep\big( \norm{\uu_\varep}_V^{s-1} +\norm{  p_\varep}_Z^{r-1}  \big)
\le 2\varep (1+\mathcal C)^{s-1}\to 0\text{ as } \varep\to 0.
\eeq 

Let $N>1$ such that $1/N<\varep_0$.
By the virtue of Lemma \ref{stationary-sol-indep-eps}, the sequence  $((\uu_{1/k},  p_{1/k}))_{k=N}^\infty$ is a bounded sequence in the reflexive Banach space $\mathcal W$. Then there exists a subsequence $((\uu_{1/k_j},  p_{1/k_j}))_{j=1}^\infty$ that converges weakly in $\mathcal W$ to an element  $(\uu, p)\in \mathcal W.$ 
Combining this fact with the limit \eqref{Aconv}, and applying a general analysis result \cite[part (c), p. 474]{ZeidlerIIB}, we obtain  $\mathcal A(\uu, p) = \mathcal F$. Therefore, $(\uu, p)$ is a solution of \eqref{VQgen}.

\medskip
We prove the uniqueness next. Suppose  $(\uu_1,   p_1)$ and  $(\uu_2,  p_2)$ are two solutions of \eqref{VQgen}. 
Subtracting their equations and choosing the testing functions $(\vv,q)=(\uu_1-\uu_2,  p_1-  p_2)$, we obtain 
\beqs
0=\intb{\mathcal A(\uu_1,p_1)-\mathcal A(\uu_2,p_2), (\uu_1-\uu_2,  p_1-  p_2) }_{\mathcal W',\mathcal W}=
a(\uu_1,\uu_1-\uu_2)-a(\uu_2,\uu_1-\uu_2).
\eeqs
Combining this with property \eqref{a4} yields $0\ge c_2 \norm{ \uu_1-\uu_2}_{0,s}^s$, which implies $\uu_1=\uu_2$.

For $i=1,2$, we have the variational equation 
$a(\uu_i,\vv)-b(\vv, p_i)= (\pi_1 \mathcal F)\vv$ for all $\vv\in V$. 
Subtracting these two equations implies $b(\vv,p_1-p_2)=a(\uu_1,\vv) - a(\uu_2,\vv)=0$  for all $\vv\in V$.
By applying inequality \eqref{supinfcdn} in Lemma~\ref{dualnorm} to $q=p_1-p_2$, we obtain $p_1=p_2$.  
\end{proof}

\subsection{Estimates and continuous dependence}\label{subcont}

Regarding the unique solutions of equation \eqref{VQgen}, we have the following estimates.

\begin{theorem}\label{postest}
Assume $\mathcal F\in \mathcal W'$, $(\uu,p)\in \mathcal W$,  and $\mathcal A(\uu,p)=\mathcal F$. 
Denote $\beta=\|\pi_1\mathcal F\|_{V'}$  and $\gamma=\|\pi_2\mathcal F\|_{Z'}$.
Then
\beq\label{udivz}
\norm{\uu}_{0,s}\le c_7(\beta^{r-1}+\gamma^{r-1}+\gamma),
 \quad \norm{\diver \uu}_{0,s} \le c_7\gamma,
 \quad \norm{ p}_Z
\le c_7( \gamma^{r-1}+\gamma^{s-1} + \beta^{r-1} + \beta),
\eeq
and, hence,
\beq\label{upW2}
\norm{(\uu,p)}_{\mathcal W}\le c_8\left ( \gamma^{r-1}+\gamma^{s-1} + \beta^{r-1} + \beta \right ).
\eeq
Consequently,
\beq\label{upW}
\norm{(\uu,p)}_{\mathcal W}\le c_9\left (\norm{\mathcal F}_{\mathcal W'}^{r-1}+\norm{\mathcal F}_{\mathcal W'}^{s-1}\right ).
\eeq 
Above, the positive constants $c_7$, $c_8$, $c_9$ are independent of $\mathcal F,\uu,p$.
\end{theorem}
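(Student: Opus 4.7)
My plan is to reproduce the proof of Lemma~\ref{stationary-sol-indep-eps} with $\varep=0$, working with the equivalent variational system~\eqref{genproj} supplied by Lemma~\ref{genequiv}. Throughout, let $\beta=\|\pi_1\mathcal F\|_{V'}$ and $\gamma=\|\pi_2\mathcal F\|_{Z'}$, and keep in mind the exponent identities $r/s=r-1$ and $(r-1)(s-1)=1$, both of which follow from $1/r+1/s=1$. Dropping the regularization terms only simplifies the argument, so no new analytic ingredient beyond what is already used for the regularized problem is needed.

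First I would establish the divergence bound by choosing $q=|\diver\uu|^{s-2}\diver\uu\in Z$ in the second equation of~\eqref{genproj}; since $\|q\|_Z=\|\diver\uu\|_{0,s}^{s-1}$, this immediately gives $\|\diver\uu\|_{0,s}^s\le \gamma\,\|\diver\uu\|_{0,s}^{s-1}$, hence $\|\diver\uu\|_{0,s}\le \gamma$. Next, adding the two equations of~\eqref{genproj} with the testing pair $(\vv,q)=(\uu,p)$ yields $a(\uu,\uu)=(\pi_1\mathcal F)(\uu)+(\pi_2\mathcal F)(p)$; using coercivity~\eqref{a2} together with $\|\uu\|_V\le \|\uu\|_{0,s}+\gamma$, this reads
\begin{equation*}
c_2\|\uu\|_{0,s}^s\le \beta\|\uu\|_{0,s}+\beta\gamma+\gamma\|p\|_Z.
\end{equation*}
To close the estimate, I would bound $\|p\|_Z$ via Lemma~\ref{dualnorm}: from the first equation of~\eqref{genproj}, $b(\vv,p)=a(\uu,\vv)-(\pi_1\mathcal F)(\vv)$, and~\eqref{a1} yields $|b(\vv,p)|\le C(\|\uu\|_{0,s}+\|\uu\|_{0,s}^{s-1}+\beta)\|\vv\|_V$, so $\|p\|_Z\le C_*(\|\uu\|_{0,s}+\|\uu\|_{0,s}^{s-1}+\beta)$. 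Substituting back and applying Young's inequality with conjugate exponents $s$ and $r$ to the terms $\beta\|\uu\|_{0,s}$, $\gamma\|\uu\|_{0,s}$ and $\gamma\|\uu\|_{0,s}^{s-1}$ absorbs the $\|\uu\|_{0,s}^s$ contributions on the right and leaves $\|\uu\|_{0,s}^s\le C(\beta^r+\gamma^r+\gamma^s)$; taking $s$-th roots and using $r/s=r-1$ produces the first bound in~\eqref{udivz}.

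Substituting this bound into the inf-sup estimate for $p$, and using $(r-1)(s-1)=1$ to simplify $\|\uu\|_{0,s}^{s-1}\lesssim \beta+\gamma+\gamma^{s-1}$, delivers the stated estimate for $\|p\|_Z$. Summing the three inequalities and absorbing the stray $\gamma$ through the elementary splitting $\gamma\le \gamma^{r-1}+\gamma^{s-1}$, which follows by treating $\gamma\le 1$ and $\gamma>1$ separately (using $r-1<1<s-1$), yields~\eqref{upW2}, and then~\eqref{upW} follows from $\beta,\gamma\le \|\mathcal F\|_{\mathcal W'}$ together with the same splitting applied to $\|\mathcal F\|_{\mathcal W'}$. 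The main obstacle here is purely bookkeeping: keeping track of which Young pairing to use at each step and verifying that every exponent that appears simplifies via the three Hölder-conjugate identities above so that the final powers of $\beta$ and $\gamma$ match the precise form prescribed in~\eqref{udivz}--\eqref{upW}.
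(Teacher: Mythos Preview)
Your proposal is correct and follows essentially the same approach as the paper: both set $\varep=0$ in the computations of Lemma~\ref{stationary-sol-indep-eps}, obtaining the divergence bound from the test function $q=|\diver\uu|^{s-2}\diver\uu$, the $\|\uu\|_{0,s}$ bound from coercivity~\eqref{a2} combined with the inf--sup estimate~\eqref{supinfcdn} for $\|p\|_Z$, and then closing via Young's inequality and the identities $r/s=r-1$, $(r-1)(s-1)=1$. The only cosmetic difference is that the paper phrases the argument by quoting the intermediate quantities $d_1,d_2,d_3$ from~\eqref{mLsbound}--\eqref{divbound} and then simplifying, whereas you write out the Young pairings explicitly; the content is the same.
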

\begin{proof}
We repeat the calculations in Lemma \ref{stationary-sol-indep-eps} with $\varep=0$.
It follows \eqref{mLsbound}, \eqref{ub1} and \eqref{divbound} that
\beq
\norm{\uu}_{0,s}\le Cd_1, \quad 
\norm{ p}_Z
\le Cd_2,\quad \norm{\diver \uu}_{0,s} \le Cd_3,
\eeq 
where  $d_1=(\beta^r +\beta \gamma+\gamma^r+ \gamma^{s})^{1/s}$, $d_2=d_1+d_1^{s-1}  +  \beta$ and $d_3= \gamma$. 

Let $C$ denote a generic positive constant as in the proof of Lemma \ref{stationary-sol-indep-eps}.
Using inequalities \eqref{ele0} and \eqref{Yineq} yields   
\begin{align*}
d_1&\le C\big (\beta^r +\gamma^r+ \gamma^{s}\big)^{1/s}\le C(\beta^{r-1}+\gamma^{r-1}+\gamma),\\
d_2&\le C\big(\beta^{r-1} +\gamma^{r-1}+ \gamma\big)
+C\big (\beta^{r-1} +\gamma^{r-1}+ \gamma \big)^{s-1} 
+  \beta
\le C(\beta^{r-1} + \beta+ \gamma^{r-1}+\gamma^{s-1}  ).
\end{align*}
Therefore, we obtain the estimates in  \eqref{udivz}.
Summing up the estimates in \eqref{udivz}, noticing that $r-1<1<s-1$ and applying  inequality \eqref{Yineq} yield \eqref{upW2}.

Finally, inequality \eqref{upW} follows \eqref{upW2}, the fact $\beta,\gamma\le \|\mathcal F\|_{\mathcal W'}$, which is due to  \eqref{piVZ},  and, again, the use of \eqref{Yineq}.  
\end{proof}

It turns out that the solutions of \eqref{VQgen} depend continuously on $\mathcal F$, as showed below.

\begin{theorem}\label{cdthm}
For $j=1,2$, let $\mathcal F_j\in\mathcal W'$ and $(\uu_j,p_j)\in \mathcal W$ satisfy $\mathcal A(\uu_j,p_j)=\mathcal F_j$.
Denote 
\beq\label{Mbg}
M=1+\sum_{j=1}^2 \left (\norm{\pi_1 \mathcal F_j}_{V'}^{r-1}+\norm{\pi_2 \mathcal F_j}_{Z'}\right ),\quad 
\beta=\norm{\pi_1 (\mathcal F_1-\mathcal F_2)}_{V'}\text{ and }
\gamma=\norm{\pi_2 (\mathcal F_1-\mathcal F_2)}_{Z'}.
\eeq 
Then 
\beq\label{cde}
\norm{(\uu_1,p_1)-(\uu_2,p_2)}_{\mathcal W}\le  c_{10}(\beta + M^{s-2}\beta^{r-1} +\gamma+  M^{r(s-2)}\gamma^{r-1} ).
\eeq
Consequently,
\beq\label{cdeF}
\norm{(\uu_1,p_1)-(\uu_2,p_2)}_{\mathcal W}
\le c_{11} \left (1+\norm{\mathcal F_1}_{\mathcal W'}+\norm{\mathcal F_2}_{\mathcal W'}\right )^{r(s-2)}
(\norm{\mathcal F_1-\mathcal F_2}_{\mathcal W'}^{r-1}+\norm{\mathcal F_1-\mathcal F_2}_{\mathcal W'}).
\eeq
Above positive constants $c_{10}$ and $c_{11}$ are independent of $\mathcal F_j,\uu_j,p_j$ for $j=1,2$.
\end{theorem}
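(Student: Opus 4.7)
The strategy is to subtract the two governing equations, exploit the monotonicity \eqref{a4} and the divergence identity to bound $\|\uu_1-\uu_2\|_{0,s}$ and $\|\diver(\uu_1-\uu_2)\|_{0,s}$, and use the inf--sup inequality \eqref{supinfcdn} together with the local Lipschitz bound \eqref{a3} to control $\|p_1-p_2\|_Z$. The a priori estimate of Theorem \ref{postest} will be essential for absorbing the Lipschitz coefficient $1+\|\uu_j\|_{0,s}^{s-2}$ into the quantity $M$. All contributions are then combined via Young's inequality, and the specific form of \eqref{cde} follows from the key observation that $\gamma\le M$, which allows any residual $M^{s-2}\gamma$ term to be recast as $M^{r(s-2)}\gamma^{r-1}$.

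Concretely, I would write $\uu=\uu_1-\uu_2$, $p=p_1-p_2$, $\mathcal G=\mathcal F_1-\mathcal F_2$, and subtract the two instances of \eqref{genproj}. Testing the resulting second equation with $q=|\diver\uu|^{s-2}\diver\uu$ immediately yields $\|\diver\uu\|_{0,s}\le\gamma$. Choosing $\vv=\uu$ in the first subtracted equation and $q=p$ in the second, then adding, eliminates the $b$-pairing, so that \eqref{a4} produces
\begin{equation*}
c_2\|\uu\|_{0,s}^s\le \beta\|\uu\|_V+\gamma\|p\|_Z\le \beta\|\uu\|_{0,s}+\beta\gamma+\gamma\|p\|_Z.
\end{equation*}
To estimate $\|p\|_Z$ I would isolate $b(\vv,p)$ from the first subtracted equation and apply \eqref{a3}; the a priori bound $\|\uu_j\|_{0,s}\le CM$ coming from Theorem \ref{postest}, combined with Lemma \ref{dualnorm}, yields $\|p\|_Z\le C(M^{s-2}\|\uu\|_{0,s}+\beta)$.

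Substituting this back and absorbing the linear-in-$\|\uu\|_{0,s}$ terms via Young's inequality with conjugate exponents $s$ and $r$ produces $\|\uu\|_{0,s}^s\le C(\beta^r+\gamma^r M^{r(s-2)}+\beta\gamma)$. Taking $s$-th roots and splitting $(\beta\gamma)^{1/s}\le C(\beta^{r-1}+\gamma)$ via Young's inequality with exponents $r,s$ --- together with the identities $r/s=r-1$ and $r(s-2)/s=2-r$ --- yields $\|\uu\|_{0,s}\le C(\beta^{r-1}+M^{2-r}\gamma^{r-1}+\gamma)$. Re-substituting this into the $\|p\|_Z$ bound and using the identity $s-2+(2-r)=r(s-2)$ produces the terms $M^{s-2}\beta^{r-1}$ and $M^{r(s-2)}\gamma^{r-1}$, together with a residual $M^{s-2}\gamma$. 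The residual is the cleanest point: $\gamma\le M$ holds by the triangle inequality and the definition of $M$, hence $M^{s-2}\gamma=M^{s-2}\gamma^{r-1}\gamma^{2-r}\le M^{r(s-2)}\gamma^{r-1}$. Summing $\|\uu\|_V+\|p\|_Z$ then delivers \eqref{cde}. Finally, \eqref{cdeF} follows by estimating $\beta,\gamma\le\|\mathcal G\|_{\mathcal W'}$ from \eqref{piVZ}, $M\le C(1+\|\mathcal F_1\|_{\mathcal W'}+\|\mathcal F_2\|_{\mathcal W'})$ (applying $x^{r-1}\le 1+x$ to each term defining $M$), and $M^{s-2}\le M^{r(s-2)}$.

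The principal obstacle is the delicate bookkeeping of the $M$-exponents. The critical algebraic identity $s-r=r(s-2)$ matches the exponent naturally arising from re-substitution to the stated one, and the critical inequality $\gamma\le M$ is what legalizes rewriting $M^{s-2}\gamma$ as $M^{r(s-2)}\gamma^{r-1}$. Without either, the natural bound produced from the chain above is strictly weaker than \eqref{cde}.
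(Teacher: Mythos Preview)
Your proposal is correct and follows essentially the same route as the paper: subtract the two instances of \eqref{genproj}, obtain $\|\diver\uu\|_{0,s}\le\gamma$ from the second equation, bound $\|p\|_Z$ via Lemma~\ref{dualnorm} and \eqref{a3} together with the a~priori estimate $\|\uu_j\|_{0,s}\le CM$ from Theorem~\ref{postest}, and close the loop for $\|\uu\|_{0,s}$ via \eqref{a4} and Young's inequality. The only cosmetic difference is in the handling of the cross term $\beta^{1/s}\gamma^{1/s}$: you split it early as $\beta^{r-1}+\gamma$, which produces a residual $M^{s-2}\gamma$ in the $\|p\|_Z$ bound that you then absorb using $\gamma\le M$; the paper instead carries $\beta^{1/s}\gamma^{1/s}$ through and disposes of it with Young's inequality at the end (once with the $M^{s-2}$ prefactor, once without), never needing the observation $\gamma\le M$.
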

\begin{proof} 
Below, $C$ denotes a generic positive constant independent of $\mathcal F_j,\uu_j,p_j$ for $j=1,2$.

Denote $\uu=\uu_1-\uu_2$,  $p=p_1-p_2$, and $\mathcal F = \mathcal F_1 -\mathcal F_2$.  
Then $\beta=\norm{\pi_1 \mathcal F}_{V'}$, $\gamma=\norm{\pi_2 \mathcal F}_{Z'}$ and,
thanks to the first estimate in \eqref{udivz} for $\uu_1$ and $\uu_2$, 
\beq \label{uM}
\norm{\uu_1}_{0,s}+\norm{\uu_2}_{0,s}\le CM.
\eeq

For $j=1,2$, one has 
\begin{align} 
a(\uu_j,\vv)-b(\vv, p_j)&= (\pi_1 \mathcal F_j)(\vv)  && \text{ for all }\vv\in V,\\
b(\uu_j, q)&=  (\pi_2 \mathcal F_j)(q)  && \text{ for all } q\in Z.
\end{align}
It follows that 
\begin{align}\label{cd1} a(\uu_1,\vv)-a(\uu_2,\vv)-b(\vv, p)&= (\pi_1 \mathcal F)(\vv) &&\text{ for all }\vv\in V,\\ 
\label{cd2}
b(\uu, q)&=  (\pi_2 \mathcal F)(q)&& \text{ for all } q\in Z.
\end{align}
Taking $q=|\diver \uu|^{s-2}\diver \uu$ in \eqref{cd2} gives
$
\|\diver \uu\|_{0,s}^s\le \gamma \|\diver \uu\|_{0,s}^{s-1}.
$
Thus,
\beq\label{cd3}
\|\diver \uu\|_{0,s}\le \gamma .
\eeq

For any $\vv\in V$, we have from \eqref{cd1} and \eqref{a3} that 
\beqs 
b(\vv, p)
\le |(\pi_1 \mathcal F)\vv|+|a(\uu_1,\vv)-a(\uu_2,\vv)|
\le \beta  \|\vv\|_V + c_6 \left(1+\norm{\uu_1}_{0,s}^{s-2}+ \norm{\uu_2}_{0,s}^{s-2}\right)\norm{\uu}_{0,s}\|\vv\|_V.
\eeqs  
Hence, making use of Lemma \ref{dualnorm} to estimate $\|p\|_{Z}$ and also \eqref{uM} gives
\beq\label{cd4}
\|p\|_Z\le C (\beta+M^{s-2}\norm{\uu}_{0,s}).
\eeq

Taking $\vv=\uu$ in  \eqref{cd1}, 
 $q=p$ in \eqref{cd2},  and  adding the resulting equations yield 
$$
a(\uu_1,\uu)-a(\uu_2,\uu)=(\pi_1 \mathcal F)\uu + (\pi_2 \mathcal F)p .
$$
Applying inequality \eqref{a4} to the left-hand side and  inequality \eqref{mFuv} to the right-hand side, we derive 
\beq\label{ineq-u}
\norm{\uu}_{0,s}^s
    \le C(\beta \|\uu\|_V +\gamma \|p\|_Z) \le C(\beta(\|\uu\|_{0,s}+\|\diver \uu \|_{0,s}) +\gamma \|p\|_Z) .
\eeq
Inserting \eqref{cd3} and \eqref{cd4} into \eqref{ineq-u} and using Young's inequality give   
\begin{align*}
    \norm{\uu}_{0,s}^s
    \le C(\beta + M^{s-2}\gamma)\norm{\uu}_{0,s} +C \beta \gamma
    \le \frac12 \norm{\uu}_{0,s}^s + C(\beta^r+ M^{r(s-2)}\gamma^r + \beta\gamma).
\end{align*}
Thus,
\beq \label{cd5} 
\norm{\uu}_{0,s} 
\le C (\beta^{r-1}+ M^{(r-1)(s-2)}\gamma^{r-1} + \beta^{1/s}\gamma^{1/s}).
\eeq 
Utilizing this estimate in \eqref{cd4}   gives
\beq%\label{cd6}
\begin{split}
\|p\|_Z&\le C (\beta + M^{r(s-2)}\gamma^{r-1} + M^{s-2}(\beta^{r-1}+  \beta^{1/s}\gamma^{1/s})).
\end{split}
\eeq
By Young's inequality again, 
$ M^{s-2}\beta^{1/s}\gamma^{1/s} \le \beta +M^{r(s-2)}\gamma^{r-1}. $
We obtain
\beq\label{cd6}\begin{split}
\|p\|_Z&\le C\beta + C M^{r(s-2)}\gamma^{r-1} + C M^{s-2}\beta^{r-1}.
\end{split}
\eeq

Combining \eqref{cd3}, \eqref{cd5}, \eqref{cd6} gives
\beq\label{cd7}
\norm{(\uu,p)}_{\mathcal W}\le  C\gamma+ C M^{r(s-2)}\gamma^{r-1} + C\beta + C M^{s-2}\beta^{r-1} + C\beta^{1/s}\gamma^{1/s}.
\eeq
By Young's inequality, $\beta^{1/s}\gamma^{1/s}\le \beta +\gamma^{r-1}$. Hence, we obtain \eqref{cde} from\eqref{cd7}.

Finally, using the facts $\beta,\gamma\le \norm{\mathcal F}_{\mathcal W'}$ and $M\le 3(1+\norm{\mathcal F_1}_{\mathcal W'}+\norm{\mathcal F_2}_{\mathcal W'})$, we obtain \eqref{cdeF} from \eqref{cde}.
\end{proof}

 \subsection{Proof of Theorem \ref{mainthm}}\label{mainpf}
 \begin{proof}
 \medskip
Part~\ref{t1}. The statement follows Theorem \ref{SolofStationaryProb} when applied to $\mathcal F=\mathcal F_{\psi,f}$.  

\medskip
Part~\ref{t2}. We apply Theorem \ref{postest} to $\mathcal F=\mathcal F_{\psi,f}$.
Note from \eqref{ppsf}, \eqref{Phinorm} and \eqref{VXnorms} that
$$\beta=\norm{\mathcal T_\psi}_{V'}\le \bar {c}_1\norm{\psi}_{X^r} \text{ and }\gamma=\norm{\Phi_f}_{Z'}=\norm{f}_{0,s}.$$
Then estimate \eqref{udivp0} follows \eqref{upW2}.

\medskip
Part~\ref{t3}. We apply Theorem \ref{cdthm} to $\mathcal F_j=\mathcal F_{\psi_j,f_j}$ for $j=1,2$.
Using properties \eqref{ppsf} and \eqref{Phinorm}, \eqref{VXnorms}  we have $M$, $\beta$ and $\gamma$ defined in \eqref{Mbg} satisfy
$\beta\le \bar c_1\|\psi_1-\psi_2\|_{X_r}$, $\gamma=\|f_1-f_2\|_{0,s}$, and, with the observation $(r-1)(s-2)=2-r$,
$$M^{s-2}\le C\left (1+ \sum_{j=1}^2 (\norm{\psi_j}_{X_r}^{2-r}+\norm{f_j}_{0,s}^{s-2})\right)=CM_0,$$
for some positive constant $C$.
Then estimate \eqref{contpf1} follows \eqref{cde}. Recalling $2-r=(r-1)(s-2)<s-2$ and applying Young's inequality, we obtain \eqref{contpf2} from \eqref{contpf1}.
\end{proof}

\section{Conclusions}\label{conclude}
In this paper, we have analyzed  the Forchheimer-typed flows of compressible fluids
 in anisotropic porous media. By combining the anisotropic momentum equation of the Forcheimer type with the conservation of mass, we obtained a system of nonlinear partial differential equations in the case of isentropic flows and slightly compressible flows.  
Many sufficient conditions for the important monotonicity property were derived for different models. 
Based on this crucial understanding of the structure of the momentum equation, we studied the steady state flows subject to a nonhomogeneous Dirichlet boundary condition. We proved the existence and uniqueness of the weak solutions, derived their bounds in terms of  the forcing  function and the boundary data, and established their continuous dependence on those function and data. 
 However, similar results for time-dependent solutions have not been achieved. It turns out that the anisotropic properties pose much difficulty in applying the known techniques. 
 For example, the straight forward technique of semi-discretization in time seemingly fails in obtaining the existence of the time-dependent solutions. Further study is certainly needed.
From the practical point of view, the numerical methods for both stationary and evolutionary equations need to be developed which must  overcome the mathematical challenge of the anisotropic conditions.

\appendix 
\section{}\label{apA}

\begin{lemma}\label{Wreflex}
For any $s\in(1,\infty)$, $\mathbf W_s(\rm{div},\Omega)$ is a reflexive Banach space.    
\end{lemma}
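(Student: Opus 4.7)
The plan is to identify $\mathbf{W}_s(\mathrm{div},\Omega)$ with a closed subspace of the reflexive Banach space $Y := \mathbf{L}^s(\Omega) \times L^s(\Omega)$ (equipped with the product norm $\|(\uu,w)\|_Y = (\|\uu\|_{0,s}^s + \|w\|_{0,s}^s)^{1/s}$), and then invoke the two standard facts that finite products of reflexive Banach spaces are reflexive, and closed subspaces of reflexive Banach spaces are reflexive.

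Concretely, I would define the linear map $T: \mathbf{W}_s(\mathrm{div},\Omega) \to Y$ by $T\vv = (\vv, \diver \vv)$. By the very definition \eqref{Wsnorm} of the norm on $\mathbf{W}_s(\mathrm{div},\Omega)$, $T$ is an isometry onto its image $E := T(\mathbf{W}_s(\mathrm{div},\Omega))$. The heart of the argument is to show that $E$ is closed in $Y$. Suppose $(\vv_k, \diver \vv_k) \to (\vv, w)$ in $Y$, i.e., $\vv_k \to \vv$ in $\mathbf{L}^s(\Omega)$ and $\diver \vv_k \to w$ in $L^s(\Omega)$. For any test function $\varphi \in C_c^\infty(\Omega)$, integration by parts gives $\int_\Omega (\diver \vv_k) \varphi \, \d x = -\int_\Omega \vv_k \cdot \nabla \varphi \, \d x$, and passing to the limit in both sides (which is valid since $\varphi$ and $\nabla \varphi$ are bounded with compact support, so both sides converge by the $L^s$-convergences) yields $\int_\Omega w\, \varphi\, \d x = -\int_\Omega \vv \cdot \nabla \varphi \, \d x$. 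Thus $w = \diver \vv$ in the distributional sense, which forces $\vv \in \mathbf{W}_s(\mathrm{div},\Omega)$ with $\diver \vv = w \in L^s(\Omega)$, i.e., $(\vv, w) = T\vv \in E$. Therefore $E$ is closed.

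Since $L^s(\Omega)$ and $\mathbf{L}^s(\Omega) = (L^s(\Omega))^n$ are reflexive for $s \in (1,\infty)$, so is the product $Y$. As a closed subspace of $Y$, $E$ is itself a reflexive Banach space, and since $T$ is a surjective linear isometry onto $E$, $\mathbf{W}_s(\mathrm{div},\Omega)$ is a reflexive Banach space as well. (In particular, completeness of $\mathbf{W}_s(\mathrm{div},\Omega)$ is also a direct consequence of the closedness of $E$ in the Banach space $Y$.)

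The only non-routine step is the closedness of $E$, and within it the only subtle point is the distributional-limit argument; everything else (the product of reflexive spaces being reflexive, closed subspaces inheriting reflexivity, and isometric images preserving reflexivity) is standard Banach-space theory. I do not foresee any serious obstacle.
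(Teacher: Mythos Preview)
Your proposal is correct and follows essentially the same approach as the paper: embed $\mathbf W_s(\mathrm{div},\Omega)$ isometrically into $(L^s(\Omega))^{n+1}$ via $\vv\mapsto(\vv,\diver\vv)$, observe the image is closed, and inherit reflexivity. If anything, you are more explicit than the paper about the closedness step (the distributional-limit argument), while the paper merely asserts it and instead spells out the dual/double-dual identifications that you (correctly) cite as standard.
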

\begin{proof}
Firstly, one can verify that $\mathbf W_s(\rm{div},\Omega)$ is a Banach space.
Same as in \cite{KS16}, we use the mapping $E\vv=(\vv,\diver \vv)$ for $\vv\in \mathbf W_s(\rm{div},\Omega)$ to embed $\mathbf W_s(\rm{div},\Omega)$ into $(L^s(\Omega))^{n+1}$. Denote $\widetilde{\mathbf W}_s=E(\mathbf W_s(\rm{div},\Omega))\subset  (L^s(\Omega))^{n+1}$.
Then the norm $\norm{E\vv}_{(L^s(\Omega))^{n+1}}$ in $\widetilde{\mathbf W}_s$ is equivalent to the norm $\norm{\vv}_V$.
This way, we can identify $\mathbf W_s(\rm{div},\Omega)$ as $\widetilde{\mathbf W}_s$ and vice versa.
As a consequence, $\widetilde{\mathbf W}_s$ is a closed subspace of $(L^s(\Omega))^{n+1}$, hence, it is a reflexive Banach space.

For dual and double dual spaces, we identify $F\in \mathbf W_s(\rm{div},\Omega)'$ as $\widetilde F=F\circ E^{-1}\in \widetilde{\mathbf W}_s'$, and identify
$G\in \mathbf W_s(\rm{div},\Omega)''$ as $\widetilde G\in \widetilde{\mathbf W}_s''$ defined by
$\widetilde G(\widetilde F)=G(\widetilde F\circ E)$ for any  $\widetilde F\in \widetilde{\mathbf W}_s'$.
Then $\widetilde{\mathbf W}_s$  being reflexive implies that $\mathbf W_s(\rm{div},\Omega)$  is a reflexive Banach space.
\end{proof}

\begin{proof}[Proof of inequality \eqref{ele2}]
We call Scenario 1 the case when the origin is not on the line segment  connecting $x$ and $y$, and Scenario 2 otherwise.
Consider Scenario 1 first. 
Let $\gamma(t)=t x+ (1-t) y\ne 0$ for $t\in [0,1]$ and $h_1(t) =|\gamma(t)|^p$.   Then 
\begin{align*}
||x|^p - |y|^p |&=\left|\int_0^1 h_1'(t) \d t\right|
 \le \int_0^1 p|\gamma(t)|^{p-2} |\gamma(t)\cdot (x-y)|  \d t
\le  p|x-y| \int_0^1 |\gamma(t)|^{p-1}  \d t\\
&\le p|x-y|\int_0^1 2^{(p-2)^+} (t^{p-1}|x|^{p-1}+ (1-t)^{p-1}|y|^{p-1})  \d t\\
&= 2^{(p-2)^+} (|x|^{p-1}+ |y|^{p-1})|x-y|,
\end{align*}
which yields \eqref{ele2}.
In Scenario 2, %the case the origin is  on the line segment connecting $x$ and $y$, 
we have $y = -k x$ or $x=-ky$ for some  number $k\in[0,\infty)$. Without loss of generality, assume the former situation.
Then we have 
 \beqs    
||x|^p - |y|^p |= |x|^p |1-k^p |\le |x|^p (1+k^p)\le |x|^p (1+k^{p-1})(1+k)=(|x|^{p-1}+ |y|^{p-1})|x-y|, \quad 
 \eeqs
which shows  \eqref{ele2} again.
\end{proof}

\begin{proof}[Proof of Lemma \ref{ILem1}]
Consider $p>0$ and $x,y\in\R^n$.
We use the same named scenarios and the function $\gamma(t)$ as in the proof of inequality \eqref{ele2}.

\medskip\noindent
\emph{Proof of  inequality \eqref{inq1}.}    
Consider Scenario 1 and define $h_2(t) =|\gamma(t)|^p \gamma(t)$ for $t\in[0,1]$.   Then 
\begin{align*}
||x|^p x- |y|^p y|&=\left|\int_0^1 h_2'(t) \d t\right|
 =\left|\int_0^1 |\gamma(t)|^p(x-y) + p|\gamma(t)|^{p-2} (\gamma(t)\cdot (x-y))\gamma(t)    \d t \right| \\ 
&\le  (1+p)|x-y| \int_0^1 |\gamma(t)|^p  \d t
\le (1+p)|x-y|\int_0^1  2^{(p-1)^+}(t^p|x|^p+(1-t)^p |y|^p)  \d t\\
&=2^{(p-1)^+} (|x|^p+ |y|^p)|x-y|.
\end{align*}
This proves \eqref{inq1}.
In Scenario 2, we can assume $y = -k x$ for some $k\ge 0$. We have  
 \beqs    
 ||x|^{p} x-|y|^{p} y|=|x|^{p+1} (1+k^{p+1}) \le |x|^{p+1} (1+k^p)(1+k)
 =(|x|^p+ |y|^p)|x-y|.
 \eeqs
 Hence, we obtain \eqref{inq1}.
 
\medskip\noindent
\emph{Proofs of  inequalities \eqref{inq3m} and \eqref{inq3}.}    Letting $z=x-y$, we have
    \begin{align*}
        (|x|^p x-|y|^p y)\cdot (x-y)
        &=\left (|x|^p\big(\frac{x+y}2+\frac z 2\big)-|y|^p \big(\frac{x+y}2-\frac z 2\big)\right )\cdot z \\
        &=(|x|^p-|y|^p)\frac{x+y}2\cdot z +\frac12(|x|^p+|y|^p)|z|^2\\
        &=\frac12(|x|^p-|y|^p)(|x|^2-|y|^2)+\frac12(|x|^p+|y|^p)|z|^2.
    \end{align*}
Since $(|x|^p-|y|^p)(|x|^2-|y|^2)\ge 0$, we obtain \eqref{inq3m}. Using $(|x|+|y|)^p\ge 2^{-(p-1)^+}|x-y|^p $, we  then deduce \eqref{inq3} from \eqref{inq3m}.

\medskip

Now, consider $p\in(-1,0)$.

\medskip\noindent
\emph{Proof of inequality \eqref{inq2}.}  Let $x,y\in\R$.  The inequality obviously holds true when $x=0$ or $y=0$. Also, by switching the roles of $x$ and $y$, we can assume $x>0$ and $y\ne 0$.

If $y>0$, then $||x|^{p} x-|y|^{p} y|=|x^{1+p}-y^{1+p}|$. Noting that $1+p\in(0,1)$, we apply inequality \eqref{ele1} to have
\beqs
||x|^{p} x-|y|^{p} y|\le |x-y|^{1+p}\le 2^{-p}|x-y|^{1+p}.
\eeqs

If $y<0$, then
$
||x|^{p} x-|y|^{p} y|=|x|^{1+p}+|y|^{1+p}.
$
Applying H\"older's inequality to the dot product of two vectors $(|x|^{1+p},|y|^{1+p})$ and $(1,1)$ with powers $1/(1+p)$ and $-1/p$,  we obtain 
\beqs
||x|^{p} x-|y|^{p} y|\le (|x|+|y|)^{1+p}\cdot 2^{-p} =2^{-p}|x-y|^{1+p},
\eeqs
which yields \eqref{inq2}.

\medskip\noindent
\emph{Proof of inequality \eqref{inq4}.}    Let $x,y\in\R^n$.
Consider Scenario 1 and define the function 
$$h_3(t) =|\gamma(t)|^{p} \gamma(t)\cdot (x-y) \text{ for }t\in[0,1].$$
Then 
\begin{align*}
(|x|^{p} x- |y|^{p} y)\cdot(x-y)&=\int_0^1 h_3'(t) \d t
=\int_0^1|\gamma(t)|^{p} |x-y|^2 +p |\gamma(t)|^{p-2}|\gamma(t)\cdot (x-y)|^2 d t\\
&\ge (1+p)|x-y|^2\int_0^1 |\gamma(t)|^{p} \d t.
\end{align*}
Note that $-p\in(0,1)$, hence 
$|\gamma(t)|^{-p} \le (|x|+|y|)^{-p}$.
Therefore, we obtain \eqref{inq4}.

In Scenario 2,  we can assume $y = -k x$ for some $k\ge 0$. We have   
\begin{align*}
(|x|^{p} x- |y|^{p} y)\cdot(x-y)
=|x|^{2+p} (1+k^{1+p})(1+k).
\end{align*}
Since $0<1+p<1$, we have from \eqref{ele0} that $1+ k^{1+p}\ge (1+k)^{1+p}$. Hence,
\begin{align*}
(|x|^{p} x- |y|^{p} y)\cdot(x-y)
\ge |x|^{2+p}(1+k)^{2+p}
= |x-y|^2 (|x|+|y|)^{p},
\end{align*}
which proves \eqref{inq4} again.
\end{proof}

\begin{proof}[Proof of Lemma \ref{dualnorm}]
Let $q\in L^r(\Omega)$. If $q=0$ then \eqref{supinfcdn} holds true. Consider $q\ne 0$.
Denote by $W^{1,r}_0(\Omega)$ the space of functions in $W^{1,r}(\Omega)$ having zero trace on the boundary.
Note that $|q|^{r-2}q\in L^s(\Omega)$. By the Browder--Minty Theorem, there exists a unique solution $w\in W^{1,s}_0(\Omega)$ of the problem
\beq\label{Bpb}
\int_\Omega |\nabla w|^{r-2}\nabla w\cdot\nabla v \d x=\int_\Omega |q|^{r-2}q v \d x\quad\text{ for all }v\in W_0^{1,r}(\Omega). 
\eeq
Choosing $v=w$ in \eqref{Bpb} and applying the H\"older and Poincar\'e inequalities give 
\beqs
\norm{\nabla w}_{0,r}^r=\int_\Omega  |\nabla w|^r dx = \int_\Omega |q|^{r-2} q w dx
\le \norm{q}_{0,r}^{r-1} \norm{w}_{0,r}\le C \norm{q}_{0,r}^{r-1} \norm{\nabla w}_{0,r},
\eeqs
where $C$ is a positive constant. Hereafter, $C$ denotes a generic positive constant.
It follows that
\beqs
\norm{\nabla w}_{0,r}\le C\norm{q}_{0,r}.
\eeqs 
Set $\uu=-|\nabla w|^{r-2}\nabla w$. Then $\uu\in \mathbf L^s(\Omega)$ and, by \eqref{Bpb}, $\diver \uu=|q|^{r-2}q\in L^s(\Omega)$.
Therefore, $\uu\in V\setminus\{0\}$. Observe that
\begin{align*}
\norm{\uu}_V^s &= \norm{\uu}_{0,s}^s  +\norm{\diver \uu}_{0,s}^s
=\norm{\nabla w}_{0,r}^r + \norm{q}_{0,r}^r 
\le C \norm{q}_{0,r}^r.
\end{align*}
Thus, $\norm{\uu}_V \le C \norm{q}_{0,r}^{r-1} $. We then have
$$\int_\Omega (\diver  \uu) q dx = \int_\Omega |q|^r dx = \norm{q}_{0,r}^r 
=\norm{q}_{0,r} \norm{q}_{0,r}^{r-1}
\ge C\norm{q}_{0,r} \norm{\uu}_V. $$
Consequently, we obtain inequality \eqref{supinfcdn}.
\end{proof}

\medskip
\noindent\textbf{Acknowledgement.} 
The authors would like to thank the referees for many valuable suggestions that help improve the paper.

\medskip
\noindent\textbf{Data availability.} 
No new data were created or analyzed in this study.

\medskip
\noindent\textbf{Funding.} No funds were received for conducting this study. 

\medskip
\noindent\textbf{Conflict of interest.}
There are no conflicts of interests.

\bibliography{paperbaseall}{}
\bibliographystyle{abbrv}

\end{document}